\documentclass[a4paper,reqno]{amsart}

\usepackage{amsmath,amssymb,amsthm}
\usepackage{mathrsfs}
\usepackage{indentfirst,color}
\usepackage[colorlinks,citecolor=red,linkcolor=blue,urlcolor=cyan]{hyperref}
\usepackage{graphicx}
\usepackage{tikz,tikz-cd}
\usepackage{bm}

\theoremstyle{plain}%
\newtheorem{theorem}{Theorem}[section]%
\newtheorem{conjecture}[theorem]{Conjecture}%
\newtheorem{proposition}[theorem]{Proposition}%
\newtheorem{lemma}[theorem]{Lemma}%
\newtheorem{corollary}[theorem]{Corollary}%
\newtheorem{example}[theorem]{Example}%

\theoremstyle{definition}%
\newtheorem{definition}[theorem]{Definition}%
\theoremstyle{remark}%
\newtheorem{remark}[theorem]{Remark}%

\newcommand{\CC}{{\mathbb{C}}}%
\newcommand{\NN}{{\mathbb{N}}}%
\newcommand{\RR}{{\mathbb{R}}}%

\makeatletter
\newsavebox{\@brx}
\newcommand{\llangle}[1][]{\savebox{\@brx}{\(\m@th{#1\langle}\)}%
    \mathopen{\copy\@brx\kern-0.5\wd\@brx\usebox{\@brx}}}
\newcommand{\rrangle}[1][]{\savebox{\@brx}{\(\m@th{#1\rangle}\)}%
    \mathclose{\copy\@brx\kern-0.5\wd\@brx\usebox{\@brx}}}
\makeatother

\newcommand{\Id}{{\operatorname{Id}}}%
\newcommand{\Vol}{{\operatorname{Vol}}}%
\newcommand{\supp}{\operatorname{supp}}%
\newcommand{\image}{\textup{i}}%
\newcommand{\Bl}{\operatorname{Bl}}
\newcommand{\End}{\operatorname{End}}
\newcommand{\Hom}{\operatorname{Hom}}

\newcommand{\rk}{\operatorname{rank}}
\newcommand{\codim}{\operatorname{codim}}
\newcommand{\tr}{\operatorname{tr}}

\newcommand{\BC}{\mathrm{BC}}
\newcommand{\HN}{\mathrm{HN}}
\newcommand{\VII}{\mathrm{VII}}
\newcommand{\Pic}{\mathrm{Pic}}

\newcommand{\lin}{\mathrm{lin}}
\newcommand{\diag}{\mathrm{diag}}
\newcommand{\FS}{\textup{FS}}%
\renewcommand{\bar}[1]{\overline{#1}}%
\newcommand{\ddbar}{\image\partial\bar{\partial}}%
\renewcommand{\leq}{\leqslant}%
\renewcommand{\geq}{\geqslant}%

\numberwithin{equation}{section}

\begin{document}

\title[Uniform RC-positivity of tangent bundles]{Uniform RC-positivity of tangent bundles}

\author[Chenghao Qing]{Chenghao Qing}
\address{Chenghao Qing: Yau Mathematical Sciences Center, Tsinghua University, Beijing 100084, China.}
\email{qingchenghao@amss.ac.cn}

\author[Hongzhao Sun]{Hongzhao Sun}
\address{Hongzhao Sun: School of Mathematical Sciences, Peking University, Beijing, 100871, China.}
\email{sunhongzhao@amss.ac.cn}

\author[Xiangyu Zhou]{Xiangyu Zhou}
\address{Xiangyu Zhou: Institute of Mathematics, State Key Laboratory of Mathematical Sciences, AMSS, Chinese Academy of Sciences, Beijing 100190, China.}
\email{xyzhou@math.ac.cn}

\date{}

\thanks{
The first author was supported by the China Postdoctoral Science Foundation (Grant No. 2025M773087).
The third author was supported by National Key R\&D Program of China (No. 2021YFA1003100) and by the National Natural Science Foundation of China (Grant No. 12288201).}

\begin{abstract}
    In this paper, we prove that every rationally connected projective manifold admits a smooth uniformly RC-positive Hermitian metric on its tangent bundle, answering Yang's question and giving a characterization of rational connectedness by uniform RC-positivity. 
    We find an example to show that RC-positivity alone does not characterize rational connectedness. 
    We also prove that uniform RC-positivity of the tangent bundle is preserved under blow-ups along connected smooth centers on compact complex manifolds.
    In the non-K\"ahler setting, 
    we construct such metrics on all Hopf and Kato surfaces and obtain classification results for compact complex surfaces.
    
\end{abstract}

\keywords{Rational connectedness, RC-positivity, Hermitian metrics.}

\subjclass[2020]{
    32Q10, 
    14M22, 
    53C55, 
    32Q57, 
    32J15  
}

\maketitle

\section{Introduction}

Curvature positivity of the tangent bundle imposes strong restrictions on the geometry of a compact complex manifold.
Rational connectedness is an algebro-geometric property defined by the existence of rational curves.
A projective manifold is called rationally connected if any two points can be connected by some rational curve.
A central question is to identify curvature conditions that characterize this property. 
Yang's notions of RC-positivity and uniform RC-positivity \cite{Yan18,Yan20} provide a natural setting for this question.

\begin{definition}
    Let $(E,h)$ be a Hermitian holomorphic vector bundle over a complex manifold $X$.
    \begin{enumerate}
        \item $(E,h)$ is called uniformly RC-positive at a point $q\in X$ if there exists a nonzero vector $u\in T_qX$ such that for every nonzero vector $v\in E_q$, 
        $$R^h(u,\bar{u},v,\bar{v})>0.$$
        \item $(E,h)$ is called RC-positive at a point $q\in X$ if for every nonzero vector $v\in E_q$, there exists a nonzero vector $u\in T_qX$ such that
        $$R^h(u,\bar{u},v,\bar{v})>0.$$
    \end{enumerate}
    The bundle $(E,h)$ is called (uniformly) RC-positive if it is (uniformly) RC-positive at each point $q\in X$.
\end{definition}

Let $X$ be a compact complex manifold. 
A holomorphic vector bundle $E$ over $X$ is said to be (uniformly) RC-positive if it admits a smooth Hermitian metric $h$ such that $(E,h)$ is (uniformly) RC-positive.

Yang proved in \cite{Yan20} that a compact K\"ahler manifold with uniformly RC-positive tangent bundle is projective and rationally connected.
Since the tangent bundle of a compact K\"ahler manifold with positive holomorphic sectional curvature is uniformly RC-positive (\cite[Theorem 5.1]{Yan20}),
this confirms a conjecture of Yau (\cite[Problem 47]{Yau82}) that a compact K\"ahler manifold with positive holomorphic sectional curvature is a projective and rationally connected manifold.
In the converse direction, Yang asked whether rational connectedness could be characterized by  RC-positivity or uniform RC-positivity of the tangent bundle.

\begin{conjecture}[{\cite[Problem 4.15]{Yan20}}] \label{Conjecture:Yang's conjecture}
    The following statements are equivalent on a projective manifold $X$:
    \begin{enumerate}
        \item $X$ is rationally connected.
        \item $TX$ is RC-positive.
        \item $TX$ is uniformly RC-positive.
    \end{enumerate}
\end{conjecture}

It's known that (3) implies (1) and (2). In the present paper, our first main result is to establish or disprove all the converse parts, solving Conjecture~\ref{Conjecture:Yang's conjecture} completely.

\begin{theorem}[=Theorem~\ref{Main Thm 0'}]\label{Main Thm 0}
	Let $X$ be a compact K\"ahler manifold.
	If $X$ is rationally connected, then $TX$ is uniformly RC-positive.
\end{theorem}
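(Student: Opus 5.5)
We will use the standard supply of very free rational curves. Since $X$ is rationally connected (hence projective), through every point $q\in X$ there passes a very free rational curve $f\colon \mathbb{P}^1\to X$, i.e. $f^*TX\cong \bigoplus_i \mathcal{O}(a_i)$ with all $a_i\geq 1$. Such curves deform in families, and the very free condition is open. By compactness of $X$ we can therefore pick finitely many families, and from them finitely many open sets $U_1,\dots,U_N$ covering $X$, with the following property on each $U_j$: there is a holomorphic (or at least smooth) nowhere vanishing vector field $u_j$ tangent to the curves of the family passing through each point. The plan is to build a Hermitian metric $h$ on $TX$ and, at each $q$, to take $u$ to be one of these tangent directions. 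We then need
\[
R^h(u,\bar u,v,\bar v)>0 \quad \text{for all nonzero } v\in T_qX .
\]

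The local model uses a single very free curve. On $f^*TX=\bigoplus\mathcal{O}(a_i)$ with $a_i>0$, the direct sum of Fubini--Study type metrics has curvature that is positive in the tangent direction $\partial_t$ for every $v$. The mechanism is Griffiths' curvature-decreasing principle. If $h$ is a metric on $TX$, then $R^h(f_*\partial_t,\overline{f_*\partial_t},v,\bar v)$ equals the curvature of $f^*h$ in the direction $\partial_t$ plus a nonnegative second fundamental form term when $f$ is an immersion. This holds only with the right sign conventions, so it must be checked. The aim is therefore a metric on $TX$ whose restriction along the chosen curves is "curve-positive" in the direction $\partial_t$.

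For the global construction, rather than working curve-by-curve, we use a family version. Choose a smooth family $F\colon \mathbb{P}^1\times B\to X$ of very free curves with $F$ a submersion onto an open set; this is possible since very free curves have unobstructed deformations that dominate $X$. The bundle $F^*TX$ restricted to fibres is ample, so its dual $F^*T^*X$ is negative along the fibres. We then seek a metric that is fibrewise of Fubini--Study type, obtained from a splitting $\bigoplus\mathcal{O}(a_i)$ that varies smoothly in $b$. Pushing this forward is problematic because $F$ is not injective. So instead, for each point $q$ we fix one curve, take a metric $h_q$ on $TX$ near $q$ that is RC-positive in the tangent direction of that curve, and glue the $h_q$. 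Uniform RC-positivity at a point with a fixed $u$ is not preserved by convex combinations of metrics, since the $u$'s differ. We therefore glue via the dual: form $\sum_j \rho_j\,\phi_j$ where the $\phi_j$ are Finsler or Hermitian norms on $T^*X$. Alternatively, we use Yang's criterion that uniform RC-positivity of $TX$ is equivalent to the existence of a metric for which $\mathcal{O}_{\mathbb{P}(T^*X)}(-1)$ has a curvature form with a positive direction coming from the base. This is a convexity-friendly condition if we use the conormal formulation, i.e. plurisubharmonic-type weights on the total space of $T^*X$.

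The main obstacle is this gluing step. Each local metric gives positivity only in one specific direction $u_j$, and the partition-of-unity cutoffs introduce error terms $\partial\rho_j\wedge\bar\partial\rho_j$ that can destroy positivity. What we would try first is to scale: take $h=\sum_j \rho_j e^{-k\psi_j}h_j$ with large $k$ and strictly psh-like weights $\psi_j$ along the curves. The goal is that near each point one index dominates, so that its curvature, amplified by $k$ in the curve direction, beats the cutoff errors. The delicate point is keeping this domination uniform on overlaps.
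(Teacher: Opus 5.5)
Your proposal has a genuine gap at exactly the step you flag. That step is not a technical detail: it is the entire content of the theorem.

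Everything you glue is local, and local uniform RC-positivity is free on any manifold: on a relatively compact coordinate ball, $e^{-k\lvert z\rvert^2}h$ is even Griffiths positive for $k\gg1$. Your domination argument refers to the curves only through the local directions $u_j$. If it worked, it would apply verbatim on a complex torus with arbitrary local directions, where $H^0(X,T^*X)\neq0$ rules out uniform RC-positivity by Yang's vanishing theorem. Very freeness is a property of \emph{compact} curves (every bundle over a disc carries a positively curved metric), so it must enter the construction globally; in your scheme it never does.

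The large-$k$ amplification also works against you. Locally, $\sum_j\rho_je^{-k\psi_j}h_j$ is the metric induced on the diagonal subbundle of the direct sum of the terms with $\rho_j>0$. Its curvature is therefore at most the weighted average of the curvatures minus a second fundamental form term. Where the dominant index switches, that term is of order $k^2\lvert\partial(\psi_i-\psi_j)(u)\rvert^2$; for two line-bundle weights $e^{-k\psi_i},e^{-k\psi_j}$ it is exactly $t_it_jk^2\lvert\partial(\psi_i-\psi_j)(u)\rvert^2$ with $t_i+t_j=1$. So it beats the order-$k$ gain unless $u$ is confined to $\ker\partial(\psi_i-\psi_j)$, which you do not arrange. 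If all $h_j=h_0$, your metric is $e^{-k\Psi}h_0$ for a global function $\Psi$ on compact $X$. At a maximum of $\Psi$ one has $\ddbar\Psi\leq0$, so there is no gain at all at that point.

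Two side remarks. First, pull-back produces no second fundamental form term: $\Theta_{f^*TX,f^*h}=f^*\Theta_{TX,h}$ exactly, and the decreasing principle concerns subbundles such as $T\mathbb{P}^1\subset f^*TX$. Second, the Finsler or $\mathcal{O}(-1)$ reformulation is not an equivalence you can use. Positivity of a line bundle on the projectivization is pointwise in $[v]$, whereas uniformity asks for one $u$ serving the whole fibre. RC-positivity of the bundles $\mathcal{O}_{\Lambda^pT^*X}(-1)$ was already known to be equivalent to rational connectedness, which is exactly why the question for $TX$ was open. A convexly glued Finsler norm is moreover not a Hermitian metric.

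The missing idea is a global averaging mechanism, which is how the paper proceeds for $n\geq3$.

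First, take one \emph{embedded} very free curve $C=\iota(\mathbb{P}^1)$. Then the split Fubini--Study metric on $\iota^*TX\cong\bigoplus_j\mathcal{O}(a_j)$ is a metric on $TX|_C$, which extends to a global metric $h$ on $TX$. By openness, every curve $\iota_b$ of the submersive evaluation family $F\colon\mathbb{P}^1\times B'\to X$ satisfies $i\Theta_{\iota_b^*TX,\iota_b^*h}\geq\kappa\,\omega_{\FS}\otimes\Id$. This disposes of your push-forward problem: no fibrewise metrics on the other curves are needed, and one family suffices even though $F$ need not be onto.

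Then make one global conformal change $H=e^{\phi}h$, where $\phi$ solves
\[
\ddbar\phi\wedge\Omega_\varepsilon=\kappa(\nu-dV),\qquad \Omega_\varepsilon=F_*(p_2^*\mu)+\varepsilon\frac{\omega^{n-1}}{(n-1)!},\qquad \nu=F_*(p_1^*\omega_{\FS}\wedge p_2^*\mu).
\]
Here $\Omega_\varepsilon$ is closed and positive, hence equals $\omega_\varepsilon^{n-1}$ for a Gauduchon metric (Michelsohn). The equation is solvable because $\int_X\nu=\int_XdV=1$.

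Set $\lambda_h(x;u)=\inf_{v\neq0}R^h(u,\bar u,v,\bar v)/\lvert v\rvert_h^2$ and $M(x)=\sup_{\lvert u\rvert_\omega=1}\lambda_H(x;u)$. Since $\lambda_H=\lambda_h-\ddbar\phi(u,\bar u)$, one gets $\ddbar\phi(u,\bar u)\geq\lambda_h(x;u)-M(x)\lvert u\rvert_\omega^2$. Integrate this along the curves against $\mu$, where $\lambda_h\geq\kappa\,\omega_{\FS}$, and use $\lambda_h\geq-C\lvert u\rvert_\omega^2$ on the $\varepsilon$-part. This gives $\ddbar\phi\wedge\Omega_\varepsilon\geq\kappa\nu-\varepsilon nC\,dV-M\,\omega\wedge\Omega_\varepsilon$. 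Comparing with the equation yields $M\,\omega\wedge\Omega_\varepsilon\geq(\kappa-\varepsilon nC)\,dV>0$ at every point. So the good direction $u$ is produced by duality rather than chosen and glued by hand.

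Note also that this route needs an embedded curve, guaranteed only for $n\geq3$. That is why the paper treats $n=2$ separately: minimal rational surfaces carry Hitchin's metrics of positive holomorphic sectional curvature, and the blow-up theorem handles the rest.
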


We discover a counterexample to the equivalence between rational connectedness and RC-positivity of the tangent bundle in Section~\ref{Sec:Pre}.
More precisely, we show that the Fermat quartic $K3$ surface has an RC-positive tangent bundle for the metric induced by the Fubini--Study metric,
but it is not rationally connected and its tangent bundle is not uniformly RC-positive.

\

\textit{Remark}. A related characterization was obtained by Li--Zhang--Zhang (\cite[Theorem 1.7]{LZZ25}). 
They proved that a compact K\"ahler manifold $X$ is projective and rationally connected if and only if its tangent bundle $TX$ is mean curvature positive. 
This means that there exist a Hermitian metric $\omega$ on $X$ and a Hermitian metric $h$ on $TX$ such that
$i\Lambda_\omega\Theta_{TX,h}>0$.
Their theorem confirms a question of Demailly and Yang (\cite[Problem 4.17]{Yan20}).
Note that mean curvature positivity is an intermediate concept between uniform RC-positivity and RC-positivity (see \cite[Theorem 3.6]{Yan18} and \cite[Remark 1.12]{LZZ25}).

\

\textit{Remark}. In complex dimension two, Zhang proved that a compact complex surface is rational if and only if it admits a K\"ahler metric with positive holomorphic sectional curvature \cite{Zha26}.
Since rationality is equivalent to rational connectedness for projective surfaces, this establishes the equivalence between rational connectedness and uniform RC-positivity of the tangent bundle for smooth projective surfaces, which was still open in higher dimension before this paper.

\

Note that the proof of Theorem~\ref{Main Thm 0} in dimension at least three is based on the existence of an embedded very free curve on rationally connected manifolds.
The algebraic property plays an essential role in the analytic construction of the required Hermitian metric.
In dimension two, we give a proof independent of Zhang's theorem by using the blow-up result, Theorem~\ref{Main Thm 1}, below.

Over the years, numerous characterizations of rationally connected manifolds have been established through both algebraic and differential geometric approaches (\cite{Kol96,AK03,CDP15,Yan20,LZZ25}). We collect them together with Theorem~\ref{Main Thm 0} as follows.

\begin{theorem}
    Let $X$ be a smooth connected complex projective variety of dimension $n$, then the following properties are equivalent:
    \begin{enumerate}
        \item $X$ is rationally connected.
        \item There exists a very free rational curve $f:\mathbb P^1\rightarrow X$, that is, $f^*TX$ is ample on $\mathbb{P}^1$.
        \item For every invertible subsheaf $\mathcal{F}\subset\Omega_X^p$, $1\leq p \leq n$, $\mathcal{F}$ is not pseudo-effective.
        \item For every invertible subsheaf $\mathcal{F}\subset\mathcal{O}((T^*X)^{\otimes p})$, $p\geq 1$, $\mathcal{F}$ is not pseudo-effective.
        \item For some (resp. for any) ample line bundle $A$ on $X$, there exists a constant $C_A>0$ such that
        $$H^0(X, (T^*X)^{\otimes m} \otimes A^{\otimes k}) = 0 \quad \textit{for all } m, k\in\mathbb{N}^* \textit{ with } m \geq C_A k.$$
        \item There exist a movable curve $C$ and a constant $\delta>0$, such that for any coherent analytic quotient sheaf $\mathcal{Q}$ of $TX$, $c_1(\mathcal{Q}) \cdot [C] \geq\delta\cdot\rk(\mathcal{Q})$.
        \item The tangent bundle $TX$ is mean curvature positive.
        \item The tangent bundle $TX$ is HN-positive (see Section~\ref{Sec:Pre} for the precise definition).
        \item The line bundle $\mathcal{O}_{\Lambda^p T^*X}(-1)$ is uniformly RC-positive for every $1\leq p\leq n$.
        \item The tangent bundle $TX$ is uniformly RC-positive.
    \end{enumerate}
\end{theorem}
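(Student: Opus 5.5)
The equivalences among the first eight conditions are already in the literature, so I will cite them and add the two new conditions. The equivalence (1)$\Leftrightarrow$(2) is the standard Kollár--Miyaoka--Mori characterization \cite{Kol96}. (1)$\Leftrightarrow$(3) is the criterion of Campana--Demailly--Peternell \cite{CDP15}, which relies on the Boucksom--Demailly--Păun--Peternell duality for pseudo-effective classes. The tensor-power version (4) and the vanishing criterion (5) go back to \cite{CDP15} as well; Yang \cite{Yan20} refines (5). (1)$\Leftrightarrow$(6) is the Mehta--Ramanathan/Campana--Păun type slope criterion. Finally, (1)$\Leftrightarrow$(7)$\Leftrightarrow$(8) is \cite[Theorem 1.7]{LZZ25} together with their discussion of HN-positivity.

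For (10), I will argue in a circle with (1) and (9). The implication (1)$\Rightarrow$(10) is Theorem~\ref{Main Thm 0}, which applies since projective manifolds are Kähler. The implication (10)$\Rightarrow$(1) is Yang's theorem \cite{Yan20}. For (10)$\Rightarrow$(9), I use that uniform RC-positivity of $E$ passes to $\Lambda^pE$ with the induced metric. Indeed, fix the distinguished direction $u$ and diagonalize $R(u,\bar u,\cdot,\cdot)$ on $E_q$. This form is positive definite, so its induced action on $\Lambda^p E_q$ is a sum of $p$ positive eigenvalues and is again positive definite. Dualizing then gives $R^{\Lambda^pT^*X}(u,\bar u,\cdot,\cdot)<0$.

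Next I pass to $\mathcal O_{\Lambda^pT^*X}(-1)\subset \pi^*\Lambda^pT^*X$ on the projectivized bundle. Its curvature at a point $[\xi]$ in a horizontal lift $\tilde u$ of $u$ equals $R^{\Lambda^pT^*X}(u,\bar u,\xi,\bar\xi)/|\xi|^2$. This uses the second fundamental form, which on the tautological subbundle only decreases curvature. By the previous paragraph this quantity is negative, and with the paper's sign convention for $\mathcal{O}(-1)$ that is exactly uniform RC-positivity at that point. Compactness of the fibres makes $u$ uniform along them.

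For (9)$\Rightarrow$(1), I restrict the $p=1$ case to the subsheaf criterion (3). Suppose some invertible $\mathcal F\subset\Omega^p_X$ were pseudo-effective. Then the induced rational section of $\mathbb P(\Lambda^pT^*X)$ pulls $\mathcal O(-1)$ back to $\mathcal F$ on an open set. Using Yang's argument that a uniformly RC-positive line bundle has no nonzero sections of negative tensor powers twisted appropriately, I would derive a contradiction with pseudo-effectivity. The concrete route is integration against a movable class: uniform RC-positivity gives a curve through a general point on which the degree of $\mathcal F^{-1}$ is positive.

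The main obstacle is this last step, because the pullback of the metric is singular along the indeterminacy locus. I would first try to handle it with Yang's original argument via the $\partial\bar\partial$-maximum principle for $\mathcal{O}(-1)$ on the projectivization.
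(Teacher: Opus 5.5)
Your treatment of (1)--(8) by citation, and of (10) via (1)$\Rightarrow$(10) (Theorem~\ref{Main Thm 0}) and (10)$\Rightarrow$(1) (Yang), is exactly what the paper does. The paper also takes (1)$\Leftrightarrow$(9) directly from Yang \cite{Yan20}, recorded as Theorem~\ref{Theorem:Characterization of rationally connected manifolds}(2). You instead try to reprove (9), and this leaves a genuine gap: (9)$\Rightarrow$(1) is not established, and the route you sketch cannot work as stated.

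In the paper's convention, $\mathbb{P}(\Lambda^pT^*X)$ is the bundle of hyperplanes of $\Lambda^pT^*X$, i.e.\ of lines in $\Lambda^pTX$, and $\mathcal{O}_{\Lambda^pT^*X}(-1)\subset\pi^*\Lambda^pTX$. A section of this bundle corresponds to a rank-one quotient of $\Omega^p_X$. So an invertible subsheaf $\mathcal{F}\subset\Omega^p_X$ does not give a rational section $\sigma$ with $\sigma^*\mathcal{O}(-1)=\mathcal{F}$. More fundamentally, even for an honest section, RC-positivity on the total space does not restrict to $\sigma(X)$. The positive direction is a Chern-horizontal lift, which need not be tangent to $\sigma(X)$, and for the induced metric the fibre directions are negative.

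The missing step is global. By Theorem~\ref{Theorem:Characterization of RC-positive line bundle}, (9) says that $\mathcal{O}_{\Lambda^pT^*X}(1)$ is not pseudo-effective on $\mathbb{P}(\Lambda^pT^*X)$. Suppose $\mathcal{F}\subset\Omega^p_X$ were pseudo-effective, and fix an ample $A$. For each $k$, $\mathcal{F}^{\otimes k}\otimes A$ is big, so for some $m$,
$0\neq H^0(X,\mathcal{F}^{\otimes mk}\otimes A^{\otimes m})\subset H^0(X,S^{mk}\Omega^p_X\otimes A^{\otimes m})=H^0(\mathbb{P}(\Lambda^pT^*X),\mathcal{O}(mk)\otimes\pi^*A^{\otimes m})$.
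Hence $\mathcal{O}(1)+\frac{1}{k}\pi^*A$ is effective for every $k$, so $\mathcal{O}(1)$ is pseudo-effective, a contradiction. Then (3)$\Rightarrow$(1) follows from \cite{CDP15}.

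Your (10)$\Rightarrow$(9) computation also contains a sign error that no convention repairs. A line bundle with negative curvature along $\tilde u$ is not positive along $\tilde u$. Moreover, the tautological subbundle of $\pi^*\Lambda^pT^*X$ that you computed is not $\mathcal{O}_{\Lambda^pT^*X}(-1)$. The correct computation is simpler and needs no dualization. On $\mathcal{O}_{\Lambda^pT^*X}(-1)\subset\pi^*\Lambda^pTX$, the second fundamental form vanishes along the Chern-horizontal lift $\tilde u$. So the curvature at $[w]$ is $R^{\Lambda^pTX}(u,\bar u,w,\bar w)/|w|^2>0$. In any case this step is redundant, since (10)$\Rightarrow$(1) together with Yang's (1)$\Rightarrow$(9) already gives it.
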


Our second main result concerns blow-ups and holds without a K\"ahler assumption.
Rational connectedness is preserved under blow-ups of projective manifolds.
Then by Theorem~\ref{Main Thm 0}, the uniform RC-positivity is also preserved in that setting.
The following theorem extends this conclusion to compact complex manifolds by a direct curvature construction.
\begin{theorem}[=Theorem~\ref{Main Thm 1'}]\label{Main Thm 1}
    Let $X$ be a compact complex manifold and let $Y\subset X$ be a connected closed complex submanifold. 
    Let $\pi:\widetilde{X}:=\Bl_YX\longrightarrow X$ be the blow-up of $X$ with center $Y$.
    If $TX$ admits a smooth uniformly RC-positive Hermitian metric, then
    $T\widetilde{X}$ is also uniformly RC-positive.
\end{theorem}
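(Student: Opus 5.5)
We construct the metric on $T\widetilde X$ by hand: keep the pullback of the given metric away from the exceptional divisor, and build a new metric on a neighbourhood of $E$ that is uniformly RC-positive there.

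Let $h$ be a smooth uniformly RC-positive metric on $TX$. Set $E=\pi^{-1}(Y)\cong\mathbb P(N_{Y/X})$ and $r=\codim Y\geq 2$; if $r=1$ there is nothing to prove. Since $\pi$ is a biholomorphism off $E$, the pullback $\pi^*h$ on $T\widetilde X|_{\widetilde X\setminus E}$ is uniformly RC-positive. Moreover, uniform RC-positivity at a point is an open condition in the $C^2$-topology of metrics, because $\{v:|v|=1\}$ is compact. The plan is to glue $\pi^*h$ outside a small tubular neighbourhood $U_\varepsilon$ of $E$ with a metric $\tilde h$ on a neighbourhood of $E$, chosen to be uniformly RC-positive near $E$ and compatible on the overlap.

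\textbf{Step 1 (local model near $E$).} Over $E$ there is the exact sequence
\[
0\to TE\to T\widetilde X|_E\to \mathcal O_E(-1)\to 0.
\]
Also $TE$ sits in $0\to T_{E/Y}\to TE\to p^*TY\to 0$, with $T_{E/Y}\cong \mathcal Hom(\mathcal O(-1),p^*N/\mathcal O(-1))$.

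At a point $q\in E$ over $y\in Y$, choose the test direction $u$ tangent to the fibre $\mathbb P^{r-1}=p^{-1}(y)$. Restricted to that fibre, $T\widetilde X|_{\mathbb P^{r-1}}\cong T\mathbb P^{r-1}\oplus\mathcal O(-1)\oplus\mathcal O^{\dim Y}$. Against the fibre direction, the summand $T\mathbb P^{r-1}$ is positive, $\mathcal O(-1)$ is negative, and the trivial summand is flat. A fibre direction alone therefore cannot work, so $u$ must mix a fibre direction with a horizontal direction.

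The natural choice is $u=u_{\mathrm{fib}}+\lambda u_{\mathrm{hor}}$, where $u_{\mathrm{hor}}$ lifts the direction $w\in T_yX$ along which $h$ is uniformly RC-positive at $y$. It is convenient to choose $w$ transverse to $Y$, pointing in the normal direction represented by the point $q\in\mathbb P(N_y)$. Then $\pi_*$ maps the $\mathcal O(-1)$ direction to the normal line that $w$ sees positively.

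\textbf{Step 2 (metric on a neighbourhood of $E$).} I would take
\[
\tilde h=\pi^*h+\varepsilon\,\omega_{\mathrm{loc}},
\]
where $\omega_{\mathrm{loc}}$ is built from $-\ddbar\log$ of a Hermitian metric on $\mathcal O_E(-1)$ extended to the tubular neighbourhood, as in the standard construction of Kähler forms on blow-ups. This is cut off by a function $\chi(|\sigma|^2)$, where $\sigma$ is the local defining section of $E$.

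Since $\pi^*h$ degenerates along $E$ (its kernel is the $\mathcal O(-1)$-like direction), the added term makes $\tilde h$ nondegenerate. On the fibres, $\tilde h$ restricts to roughly the Fubini--Study metric on $T\mathbb P^{r-1}$, with a twisted metric on the normal line. I would compute the curvature of $\tilde h$ in the frame adapted to the splitting above, using the second fundamental forms of the extension. The goal is the following estimate: for the mixed $u$, the curvature on unit vectors $v$ is at least a positive constant times $|v|^2$. The contribution comes from $\varepsilon$ times Fubini--Study positivity in the fibre directions, plus $\pi^*h$-positivity in the $w$-direction, plus cross terms of size $O(\sqrt\varepsilon)$ that are controlled by choosing $\lambda$ suitably.

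\textbf{Step 3 (gluing).} In the annulus $\delta<|\sigma|<2\delta$, $\tilde h$ is a small perturbation of $\pi^*h$, and $\pi^*h$ is uniformly RC-positive there with a uniform constant, since the annulus maps to a compact set of $X\setminus Y$ where $h$ is uniformly RC-positive. Choosing $\varepsilon\ll\delta$, both the cutoff derivatives and the added term contribute $O(\varepsilon/\delta^2)$ in $C^2$. The openness of uniform RC-positivity then completes the argument.

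\textbf{Main obstacle.} The hardest step is Step 2 near $E$ itself. The pullback metric is degenerate there, so its curvature in the direction of $\mathcal O(-1)$ has singular behaviour of order $|\sigma|^{-2}$ when the metric is normalised. The mixed test vector $u$ must handle simultaneously the negative $\mathcal O(-1)$ summand and the flat $\mathcal O^{\dim Y}$ summand.

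I would first treat the model case, the blow-up of $\mathbb C^n$ along $\mathbb C^{n-r}$ with the flat metric plus a perturbation that is uniformly RC-positive. There I would verify positivity explicitly using the $U(r)$-symmetry, which reduces the problem to the blow-up of the origin in $\mathbb C^r$ times a product factor. I would then transfer the model to the general case via holomorphic normal coordinates along $Y$, absorbing the errors by openness.
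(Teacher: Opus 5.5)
Your overall architecture (keep $\pi^*h$ away from $E$, put a new metric near $E$, glue in an annulus by openness) is reasonable, and Step 3 works once $\varepsilon$ is small relative to a suitable power of $\delta$. The gap is Step 2. It carries the whole content of the theorem, it is only asserted, and for the $U(r)$-symmetric model you propose to verify it is false.

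Take $Y$ a point of a surface, which is precisely the case needed for rational and blown-up Hopf surfaces, and take $\omega_{\mathrm{loc}}=\ddbar\log|w|^2$ in an $h(y)$-unitary chart. Rescale by $w=\sqrt{\varepsilon}\,w'$, which lifts to $\Bl_0\mathbb{C}^2$. Then $\varepsilon^{-1}\tilde h$ converges in $C^\infty_{\mathrm{loc}}$ to the Burns metric $\ddbar(|w'|^2+\log|w'|^2)$, and the variation and curvature of $h$ enter only through $O(\sqrt{\varepsilon})$ corrections. For the Burns metric:
\begin{enumerate}
\item $E$ is totally geodesic, being fixed pointwise by $w\mapsto e^{i\theta}w$;
\item $E$ is homogeneous with $\deg TE:\deg N_E=2:(-1)$;
\item the proper transforms of lines through $0$ are flat and totally geodesic.
\end{enumerate}
Hence at $q\in E$, in a unitary frame $e_1\in T_qE$, $e_2\perp T_qE$, one gets $R_{1\bar{1}1\bar{1}}=a>0$, $R_{1\bar{1}2\bar{2}}=-a/2$ (with its K\"ahler-symmetric partners) and $R_{2\bar{2}2\bar{2}}=0$. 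All other components vanish by the $S^1$-weights. Therefore $R(u,\bar{u},e_2,\bar{e}_2)=-\frac{a}{2}|u_1|^2$ and $R(u,\bar{u},e_1,\bar{e}_1)=a|u_1|^2-\frac{a}{2}|u_2|^2$, and for every unit $u$ one of the two is at most $-a/12$.

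Back in the original scale, at every point of $E$ and for every direction $u$ there is a $v$ with $R^{\tilde h}(u,\bar{u},v,\bar{v})\leq -c\,\varepsilon^{-1}|u|^2_{\tilde h}|v|^2_{\tilde h}$. Neither the $O(1)$ positivity inherited from $h$ nor your $O(\sqrt{\varepsilon})$ cross terms can offset this. Your bookkeeping omits exactly this order-$\varepsilon^{-1}$ negativity coming from $N_E\simeq\mathcal{O}(-1)$ inside the bubble.

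When $\dim Y=0$ there is also no horizontal direction at all. Indeed $d\pi_q(T_q\widetilde{X})=\ell_q$, the normal line represented by $q$, and nothing forces $\ell_q$ to lie in the RC-good cone of $h$ at $y$ for every $q$. (Also, $\pi^*h$ degenerates along $T_{E/Y}=\ker d\pi$, not along $N_E$.)

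The missing idea is the paper's conformal change, which avoids analysing any bubble. For $H=e^{-F}h_0$ one has $R^H=e^{-F}(R^{h_0}+\ddbar F\otimes h_0)$. The new term $\ddbar F(u,\bar{u})\,|v|^2_{h_0}$ is positive for all $v$ simultaneously as soon as $\ddbar F(u,\bar{u})>0$, which is exactly the shape of uniform RC-positivity.

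Take $\rho$ a squared distance to $Y$ and $F=A\,\pi^*\rho$ near $E$. Then $\ddbar\pi^*\rho$ has a positive eigenvalue at every point near $E$: on $E$ it is $Q_y(v(\xi))\,dt\otimes d\bar{t}$, positive in the direction normal to $E$, and off $E$ one uses a direction with $\partial\rho(u)=0$ and $\ddbar\rho(u,\bar{u})\geq c_0$. So for $A$ large, even an arbitrary smooth metric $h_0$ near $E$ becomes uniformly RC-positive there.

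The real work is switching $F$ off. Write $F=f(\pi^*\rho)$ with $f'(\tau)=\lambda(\tau)/\tau$.
\begin{itemize}
\item In the zone where $\lambda/\tau>M$, one uses the same direction with $\partial\rho(u)=0$. This is where $\codim Y\geq 2$ enters, since it kills the $f''|\partial\rho(u)|^2$ term.
\item In the decay zone, the bounds $\lambda'\geq-\eta$, $0\leq P_\rho\leq C_P g$ and $\ddbar\rho-P_\rho\geq -C_A\sqrt{\rho}\,g$ give $\ddbar F\geq -\frac{C_1}{2}g$, so the original uniform RC-positivity of $h$ survives.
\end{itemize}
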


Based on this result, we consider the classification of compact complex surfaces by using uniform RC-positivity and the Enriques--Kodaira classification, especially in the non-K\"ahler case.
The K\"ahler case is determined by Theorem~\ref{Main Thm 0}.

\begin{theorem}[{=Theorem~\ref{Main Thm 2'}}] \label{Main Thm 2}
    Let $X$ be a compact connected complex surface.
    \begin{enumerate}
        \item If $TX$ is uniformly RC-positive, then $\kappa(X)=-\infty$, and the minimal model of $X$ is either rational or of class VII.  
        An irrational ruled surface cannot occur.
        \item If $X$ is K\"ahler, then
        $TX$ is uniformly RC-positive if and only if $X$ is rational.
        \item Every Hopf surface has uniformly RC-positive tangent bundle.  
        The same holds after any finite sequence of point blow-ups.
        \item No Inoue-Bombieri surface, and no finite sequence of point blow-ups of one, has uniformly RC-positive tangent bundle.
        \item Among minimal class VII surfaces with $b_2=0$, the uniformly RC-positive ones are exactly the Hopf surfaces.  
    \end{enumerate}
\end{theorem}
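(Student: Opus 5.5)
The five items split into three groups. Items (1), (2) and (5) follow from general obstructions, the Enriques--Kodaira classification, and Theorems~\ref{Main Thm 0} and \ref{Main Thm 1}. Item (3) is a direct construction of metrics. Item (4) is an obstruction argument specific to Inoue--Bombieri surfaces.

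\textbf{Items (1) and (2).} The basic tool is the following vanishing statement (Yang). If $TX$ is uniformly RC-positive, then $H^0(X,(T^*X)^{\otimes m}\otimes L)=0$ for every $m\geq 1$ and every line bundle $L$ that is flat or has nonpositive degree along the positive directions. The proof is a maximum-principle argument: take a holomorphic section $\sigma$, look at a maximum point of $|\sigma|^2$, and restrict $\partial\bar\partial\log|\sigma|^2$ to the distinguished direction $u$ at that point.
\begin{itemize}
\item Applying this to $K_X^{\otimes m}\subset (T^*X)^{\otimes 2m}$ shows all plurigenera vanish, so $\kappa(X)=-\infty$.
\item The classification then says the minimal model of $X$ is rational, ruled over a curve $C$ of genus $g\geq 1$, or of class VII.
\item To exclude the ruled case, pull back a nonzero holomorphic $1$-form from $C$ to get a nonzero element of $H^0(X,T^*X)$, contradicting the vanishing with $m=1$, $L$ trivial. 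The same argument applies directly to $X$, so no minimality is needed.
\end{itemize}
For item (2): if $X$ is K\"ahler with uniformly RC-positive $TX$, Yang's theorem makes $X$ projective and rationally connected, hence rational since $\dim X=2$. Conversely, a rational surface is rationally connected, so Theorem~\ref{Main Thm 0} applies. Alternatively, one can start from $\mathbb P^2$ or a Hirzebruch surface $\mathbb F_n$, which carry explicit metrics, and use Theorem~\ref{Main Thm 1}.

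\textbf{Item (3).} First treat primary Hopf surfaces $X=(\CC^2\setminus\{0\})/\langle f\rangle$. Up to conjugation, $f$ is either
\begin{itemize}
\item diagonal, $f(z,w)=(\alpha z,\beta w)$ with $0<|\alpha|\leq|\beta|<1$, or
\item of Poincar\'e--Dulac type, $f(z,w)=(\beta^m z+\lambda w^m,\beta w)$.
\end{itemize}
In the diagonal case, build an $f$-invariant Hermitian metric on $T(\CC^2\setminus\{0\})$ from a weight function $\phi$ with $\phi\circ f=\phi+c$, for instance the solution of $|z|^2e^{-2a\phi}+|w|^2e^{-2b\phi}=1$, where $|\alpha|=e^{-a}$ and $|\beta|=e^{-b}$. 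Then set $h=e^{-2\phi}\cdot(\text{standard metric})$, suitably twisted. The Boothby/Gauduchon-type metric $|dz|^2/\|(z,w)\|^2$ in the case $\alpha=\beta$ serves as a model: its curvature is computable, and the radial direction $u=z\partial_z+w\partial_w$ should serve as the uniform direction.

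The key step is an explicit curvature computation showing $R(u,\bar u,v,\bar v)>0$ for all nonzero $v$. Finding invariant metrics is routine; proving strict uniform positivity in the non-diagonal and resonant cases is the main obstacle. For those cases I would deform through the family $f_t$ with $\lambda\to 0$, or handle the $\lambda w^m$ term by a quasi-homogeneous weight.

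Secondary Hopf surfaces are finite quotients of primary ones. Averaging the metric over the finite group does not preserve curvature conditions in general, so instead I would choose the primary construction to be invariant under the finite group from the start, which is possible since the group acts by linear-type maps commuting with $f$ up to normal form. Finally, blow-ups are handled by Theorem~\ref{Main Thm 1} applied repeatedly to points.

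\textbf{Items (4) and (5).} An Inoue--Bombieri surface $S$ carries a holomorphic line subbundle, or foliation, with a flat or transversally flat structure. Concretely, there is a nowhere-vanishing section of $T^*S\otimes L$ with $L$ a flat line bundle, coming from the invariant $1$-form $dw$ on $\mathbb H\times\CC$ twisted by a unitary-type character.

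Under uniform RC-positivity, the maximum-principle argument applied to a section of $T^*S\otimes L$ with $L$ flat (hence carrying a flat metric) still gives vanishing. This yields a contradiction for $S$.

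The twisted form pulls back to any blow-up $\widetilde S\to S$ as a nonzero section of $T^*\widetilde S\otimes\pi^*L$, with $\pi^*L$ still flat, so blow-ups are excluded too. This proves item (4).

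For item (5), Bogomolov's classification of minimal class VII surfaces with $b_2=0$ says each is a Hopf or an Inoue--Bombieri surface. Combining items (3) and (4) then gives item (5).
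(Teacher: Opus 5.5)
\textbf{Items (1), (2), (5) and the blow-up half of (3)} go essentially as in the paper: Yang's vanishing kills the plurigenera, $1$-forms pulled back from a base of genus $\geq 1$ exclude irrational ruled models, the K\"ahler case is Yang's theorem plus Theorem~\ref{Main Thm 0}, and (5) is Bogomolov's classification.

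\textbf{Item (3): the genuine gap.} You never establish uniform RC-positivity of Hopf surfaces, and the one concrete claim you make is wrong.
\begin{enumerate}
\item For the Boothby metric $h=r^{-2}(|dz|^2+|dw|^2)$, $r^2=|z|^2+|w|^2$, the conformal change formula gives $R^h=\ddbar\log r^2\otimes h$. Here $\ddbar\log r^2$ is the pull-back of the Fubini--Study form under $\CC^2\setminus\{0\}\to\mathbb{P}^1$, and its kernel is exactly the radial line spanned by $z\partial_z+w\partial_w$. So your proposed direction is the unique degenerate one; any non-radial $u$ works.
\item For $|a|\neq|b|$ no conformal multiple of the flat metric is invariant. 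The required ``twist'' is Yang's metric $\alpha^{-2}\Phi^{-\alpha}|dz|^2+(2-\alpha)^{-2}\Phi^{\alpha-2}|dw|^2$, whose curvature factors as $A\otimes B$ with $A\geq 0$ of rank one and $B>0$. That computation is precisely what you leave open.
\item For non-primary Hopf surfaces, ``choose the primary metric invariant under the finite group from the start'' is unsupported. In the resonant case you do not even have an invariant metric on the primary cover, only a proposed deformation.
\end{enumerate}
The paper avoids all of this. Hasegawa's degenerations $X\rightsquigarrow X_{\lin}\rightsquigarrow X_{\diag}$, together with openness of uniform RC-positivity in proper holomorphic families, reduce the problem to $X_{\diag}=H_{a,b}/G_0$. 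There, instead of averaging, one takes the quotient metric on $TX$ induced from $\bigoplus_{\sigma\in G}(TX,\sigma^*h)$ by the summation map. Finitely many kernel lines of the forms $\sigma^*A$ cannot cover $T_xX$, so the direct sum is uniformly RC-positive. Hence so is its quotient, and the quotient metric is $G$-invariant and descends.

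\textbf{Item (4): a false step, but repairable.} On every Inoue--Bombieri surface the generator $g_0$ acts on the $\mathbb{H}$-factor by $w\mapsto\alpha w$ with $\alpha>1$ real, so $g_0^*dw=\alpha\,dw$. The twisting bundle $L$ of $dw$ therefore has a non-unitary character. It carries no flat Hermitian metric and has nonzero Gauduchon degree, so ``flat, hence carrying a flat metric'' fails and your maximum-principle argument does not apply as written.

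The fix: the remaining generators move $w$ only by real translations, so $|e_L|^2=(\mathrm{Im}\,w)^{-2}$ defines a metric on $L$. For this metric $dw\otimes e_L$ has constant norm and $i\Theta_L=2\ddbar\log(\mathrm{Im}\,w)\leq 0$. That sign is what the maximum principle needs; equivalently, $L^{-1}\subset T^*S$ is pseudo-effective, contradicting Proposition~\ref{Proposition:Basic property for uniformly RC-positive vector bundle}(4). Your pull-back argument for blow-ups then goes through, since the pulled-back section is nonzero and the pulled-back metric keeps its sign.

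The paper argues more directly. $K_S$ carries a smooth semi-positively curved metric, so it is pseudo-effective, which is incompatible with uniform RC-positivity of $TS$ via $\Lambda^2TS\simeq K_S^{-1}$ and Proposition~\ref{Proposition:Basic property for uniformly RC-positive vector bundle}. Moreover, $K_{\widetilde S}=\pi^*K_S\otimes\mathcal{O}(E)$ stays pseudo-effective after blow-ups.
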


We finally consider surfaces of class $\VII_0^+$, namely minimal class $\VII$ surfaces with  positive second Betti number.

\begin{theorem} [=Theorem~\ref{Main Thm 3'}] \label{Main Thm 3}
    Let $X$ be a surface of class $\VII_0^+$. Then
    \begin{enumerate}
        \item $-K_X$ is (uniformly) RC-positive.
        \item If $X$ is a Kato surface, then $TX$ is
        uniformly RC-positive.
        \item If $X$ carries no holomorphic foliation, then $TX$ is mean curvature positive.
    \end{enumerate}
\end{theorem}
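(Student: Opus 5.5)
Surfaces of class $\VII_0^+$ are non-Kähler with $b_1=1$ and $b_2>0$. For each of the three parts I plan to write down a Hermitian metric explicitly, using the known structure of such surfaces.

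\emph{Part (1).} A line bundle $L$ with metric $h$ is uniformly RC-positive iff at each point the form $\image\Theta_{L,h}$ is positive in some direction $u$. For a line bundle this is the same as RC-positivity, which explains the bracket in the statement. So it suffices to find a metric on $-K_X$ whose curvature form is nowhere seminegative. I will use duality between pseudo-effectivity and Gauduchon classes (Lamari's criterion in the non-Kähler form, as in the work of Yang). By this duality, $-K_X$ fails to carry such a metric only if $K_X$ is pseudo-effective in the sense of Gauduchon degree, i.e. $\deg_{\omega}K_X\geq 0$ for every Gauduchon metric $\omega$. For class $\VII$ surfaces with $b_2>0$, Teleman's results give a Gauduchon metric with $\deg_\omega K_X<0$. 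Concretely, $c_1(K_X)^2=-b_2<0$, and the degree function on the Gauduchon cone changes sign. Given a Gauduchon $\omega$ with $\deg_\omega(-K_X)>0$, I solve $\Lambda_\omega \image\Theta_{h}=\mathrm{const}>0$ by a conformal change of $h$ (the Gauduchon condition makes the Laplacian equation solvable). Then $\tr_\omega \image\Theta_h>0$ everywhere, so $\image\Theta_h$ has a positive eigenvalue at every point.

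\emph{Part (3).} Mean curvature positivity needs a metric $h$ on $TX$ with $\image\Lambda_\omega\Theta_{TX,h}>0$. I will invoke the non-Kähler analogue of the Li--Zhang--Zhang criterion: $TX$ is mean curvature positive iff it is HN-positive for some Gauduchon $\omega$, meaning every quotient sheaf of $TX$ has positive $\omega$-slope. If $X$ has no holomorphic foliation, then $TX$ has no rank-one saturated subsheaves, so the only quotient to check is $TX$ itself. Its slope is positive by Part (1) for the same $\omega$. The analytic step (Uhlenbeck--Yau/Li--Yau for Gauduchon metrics, then a perturbation) gives $h$.

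\emph{Part (2).} A Kato surface contains a global spherical shell, so it is obtained from a blown-up ball $\Bl_{p_1,\dots,p_n}B$ by gluing a neighbourhood of the inner sphere to the boundary sphere via a holomorphic map. My plan is to mimic the Hopf surface construction of Theorem~\ref{Main Thm 2}(3) on the annular region and the blow-up construction of Theorem~\ref{Main Thm 1} near the exceptional curves. Then I patch these by cutoff functions, keeping a common direction $u$ that works for all $v$ at every point. On the shell I take the Hopf-type metric $|z|^{-2}\sum dz_i\otimes d\bar z_i$. Its curvature is positive in the radial direction $u=z$ for every $v$, and it is invariant under the gluing map up to holomorphic change near the shell. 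I will need to arrange that the gluing map is, after modification, a homothety near the spheres. The blow-up metrics are then inserted in small balls around the blown-up points, as in the proof of Theorem~\ref{Main Thm 1}.

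The main obstacle is Part (2): the gluing map of a general Kato surface is not linear. To handle this, I would first use Dloussky's normal forms of Kato contracting germs to conjugate the map near the sphere to something close to a homothety. Uniform RC-positivity is an open condition, which should absorb the error terms.
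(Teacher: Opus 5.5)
Parts (1) and (3) are essentially the paper's argument. For (1), the input is Teleman's theorem that $c_1^{\BC}(K_X)$ is not pseudo-effective. This gives a Gauduchon $\omega$ with $\deg_\omega(-K_X)>0$, and Yang's characterization of RC-positive line bundles finishes. Your heuristic via $c_1(K_X)^2=-b_2$ and a ``sign change'' of the degree does not by itself give this, and it is not needed. For (3), no foliation means $TX$ has no rank-one saturated subsheaf. Hence $\mu_L(TX,\omega)=\mu_\omega(TX)=\tfrac12\deg_\omega(-K_X)>0$, and Li--Zhang--Zhang applies, exactly as in the paper.

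Part (2), however, has a genuine gap. A small point first: $|z|^{-2}\sum dz_i\otimes d\bar z_i=e^{-\log|z|^2}\delta$ has curvature $\ddbar\log|z|^2\otimes H$. Since $\ddbar\log|z|^2$ annihilates the radial vector $z$, the good directions are $u\perp z$, not $u=z$.

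More importantly, the gluing map is not where the difficulty lies. Write $\Pi\colon\widehat B\to B$ for the iterated blow-up and $j\colon\bar B\to\widehat B$ for the gluing embedding, with $O=j(0)$ on the last exceptional curve $C_n$. Use $\Pi$ as a chart on the outer collar and $j^{-1}$ on the inner collar. In these charts the identification is the identity, so the shell is a single coordinate annulus and $|z|^{-2}\delta$ is automatically well defined on it.

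What must actually be built is a uniformly RC-positive metric on the fundamental domain $W=\Pi^{-1}(B)\setminus j(B)$. This region contains the exceptional curves of $\Pi$, and the profile $|z|^{-2}\delta$ is prescribed at \emph{both} ends, the inner end being a small sphere around $O\in C_n$. Theorem~\ref{Main Thm 1} can at best give a uniformly RC-positive metric on $\Pi^{-1}(B)$, after smoothing the shell metric at $0$. It says nothing about passing from that metric near $O$, where the blow-up potential $A\widetilde\rho$ vanishes along $C_n$, back to $|w|^{-2}\delta_w$ at the inner sphere across $C_n$. Naive interpolations fail here. For instance, a regularized maximum of $\log|w|^2+c$ and $A\widetilde\rho$ cannot switch from the first to the second along $C_n$: there $\widetilde\rho=0$ while $\log|w|^2$ increases outward. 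Your plan has no mechanism for this step.

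The proposed remedy does not supply one. For a Kato surface $O$ lies on the exceptional divisor, so the germ $F=\Pi\circ j$ has singular differential at $0$; a normal form is, e.g., $(z_1^pz_2^q,z_1^rz_2^s)$ for Inoue--Hirzebruch surfaces. No coordinate change makes $F$ close to a homothety, since invertible contracting germs only produce (possibly blown-up) Hopf surfaces. So there are no small error terms for openness to absorb.

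Openness (Lemma~\ref{Lemma:Openness in a proper holomorphic family}) also points the wrong way. By Kato's theorem, a Kato surface is the \emph{central} fibre of a family whose other fibres are blown-up Hopf surfaces. Openness therefore passes positivity from the Kato surface to those fibres, not back.

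The paper avoids the spherical-shell gluing entirely:
\begin{enumerate}
\item It uses the Dloussky--Oeljeklaus foliation $0\to T\mathcal{F}\to TX\to N\mathcal{F}\otimes\mathcal{I}_Z\to0$.
\item It proves (Lemma~\ref{Lemma:Uniformly RC-positive for Kato surfaces}) that such an extension is uniformly RC-positive once a single Gauduchon metric gives both $T\mathcal{F}$ and $N\mathcal{F}$ positive degree. The proof rescales the extension class by $\varepsilon$ towards the split metric $h_1\oplus h_2$, then applies the conformal trick of Theorem~\ref{Main Thm 1} near $Z$.
\item It finds that Gauduchon metric among the classes $[\omega_0]+t\,c_1^{\BC}(K_X)+s\gamma_0$, using Buchdahl's criterion and the relations $a^2=\sigma-3n$, $b^2=2n-\sigma$, $a\cdot b=0$. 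The Enoki and Inoue--Hirzebruch cases are handled separately.
\end{enumerate}
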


The rest of the paper is organized as follows. 
In Section~\ref{Sec:Pre}, we recall the concept of uniform RC-positivity and its geometric properties.
We also compute the positivity of the Fermat quartic $K3$ surface.
In Section~\ref{Sec:Equivalence}, we prove Theorem~\ref{Main Thm 0}, the equivalence between rational connectedness and uniform RC-positivity of tangent bundle, by an embedded very free curve.
In Section~\ref{Sec:Blow-up}, we derive Theorem~\ref{Main Thm 1} by a conformal change.
In Section~\ref{Sec:Classification 1}, we compute the curvature of general Hopf surfaces and prove Theorem~\ref{Main Thm 2}.
In Section~\ref{Sec:Classification 2}, we consider the case of class $\VII_0^+$ surfaces and give some partial results (Theorem~\ref{Main Thm 3}) on the positivity of their anticanonical bundles and tangent bundles.

\section{Preliminaries and a counterexample} \label{Sec:Pre}

\subsection{Curvature of vector bundles and rational connectedness}

Let $(E,h)$ be a holomorphic vector bundle on a complex manifold $X$ equipped with a smooth Hermitian metric $h$ and the corresponding Chern connection $D_E$.
The Chern curvature of $(E,h)$ 
$$i\Theta_{E,h}\in C^\infty(X,\Lambda^{1,1}T^*X\otimes\End(E))$$
defines the curvature tensor
$$R^h\in C^\infty(X,\Lambda^{1,1}T^*X\otimes E^*\otimes\overline{E}^*)$$
to be
$$R^h(u,\bar{v},e,\bar{f})=\langle i\Theta_{E,h}(u,\bar{v})e,\bar{f} \rangle_h$$
for tangent vectors $u,v\in TX$ and vectors $e,f\in E$.

Let $\{z^i\}_{i=1}^n$ be local holomorphic coordinates on $X$ and $\{e_\alpha\}_{\alpha=1}^r$ a local frame of $E$. 
The curvature tensor $R^h$ has components
$$R^h_{i\bar{j}\alpha\bar{\beta}}=
-\frac{\partial^2h_{\alpha\bar{\beta}}}{\partial z^i\partial\bar{z}^j}
+h^{\gamma\bar{\delta}}\frac{\partial h_{\alpha\bar{\delta}}}{\partial z^i}\frac{\partial h_{\gamma\bar{\beta}}}{\partial\bar{z}^j}.$$

For a holomorphic subbundle $S$ of $(E,h)$ and its quotient vector bundle $Q:=E/S$, we consider the exact sequence
$$0\longrightarrow S\longrightarrow E\longrightarrow Q\longrightarrow 0.$$
Denote by $h_S$ and $h_Q$ the induced and quotient metrics on $S$ and $Q$ respectively, and by $D_S$ and $D_Q$ the corresponding Chern connections.
We have the $C^\infty$ isomorphism $E\cong S\oplus Q$.
With respect to this decomposition, $D_E$ can be written as
\begin{equation*}
    D_E=
    \begin{pmatrix}
        D_S & -\beta^* \\
        \beta & D_Q
    \end{pmatrix}
\end{equation*}
where $\beta\in C^\infty(X,\Lambda^{1,0}T^*X\otimes\Hom(S,Q))$ is called the second fundamental form of $S$ in $E$ and where $\beta^*\in C^\infty(X,\Lambda^{0,1}T^*X\otimes\Hom(Q,S))$ is the adjoint of $\beta$ satisfying $\bar\partial_{\Hom(Q,S)} \beta^*=0$
(see \cite[Chapter V, \S 14]{Dem}).

Then $i\Theta_{S,h_S}$ and $i\Theta_{Q,h_Q}$ can be given in terms of $i\Theta_{E,h}$ by the following formulas:
\begin{equation*}
    i\Theta_{S,h_S}=i\Theta_{E,h}|_S+i\beta^*\wedge \beta \quad \text{and} \quad
    i\Theta_{Q,h_Q}=i\Theta_{E,h}|_Q+i\beta\wedge \beta^*,
\end{equation*}
where $i\Theta_{E,h}|_S$ and $i\Theta_{E,h}|_Q$ denote the blocks in the matrix of $i\Theta_{E,h}$ corresponding to $\Hom(S,S)$ and $\Hom(Q,Q)$.
Therefore, for a tangent vector $u\in TX$, a vector $e\in S$, and a vector $f\in Q$, we have 
\begin{equation}\label{Formula:curvature of subbundle}
    \begin{aligned}
        R^h(u,\bar{u},e,\bar{e})-\langle \beta(u)(e),\beta(u)(e)\rangle_{h_Q}&=R^{h_S}(u,\bar{u},e,\bar{e}), \\
        R^h(u,\bar{u},f,\bar{f})+\langle \beta^*(\bar{u})(f),\beta^*(\bar{u})(f)\rangle_{h_S}&=R^{h_Q}(u,\bar{u},f,\bar{f}).
    \end{aligned}
\end{equation}

\

Inspired by the proof of \cite[Theorem 1.6]{Yan20}, we will use conformal changes repeatedly in the proofs of the main results.

\begin{lemma}
    For a smooth function $F$ and Hermitian metric $H=e^{-F}h$, 
    the curvature tensor $R^H$ has the expression
    $$R^H=e^{-F}(R^h+\ddbar F\otimes h).$$
\end{lemma}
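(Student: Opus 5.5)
The plan is a direct computation in local coordinates, using the component formula for $R^h_{i\bar j\alpha\bar\beta}$ recalled above. Write $H_{\alpha\bar\beta}=e^{-F}h_{\alpha\bar\beta}$. Then $H^{\gamma\bar\delta}=e^{F}h^{\gamma\bar\delta}$. By the Leibniz rule, the first derivatives are
$$\partial_i H_{\alpha\bar\beta}=e^{-F}\bigl(\partial_i h_{\alpha\bar\beta}-F_i h_{\alpha\bar\beta}\bigr),$$
where $F_i=\partial F/\partial z^i$. The $\bar z^j$-derivative has the analogous form.

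Next I compute the second derivative $\partial_i\partial_{\bar j}H_{\alpha\bar\beta}$. It equals $e^{-F}$ times
$$\partial_i\partial_{\bar j}h_{\alpha\bar\beta}-F_i\,\partial_{\bar j}h_{\alpha\bar\beta}-F_{\bar j}\,\partial_i h_{\alpha\bar\beta}-F_{i\bar j}h_{\alpha\bar\beta}+F_iF_{\bar j}h_{\alpha\bar\beta}.$$
I then expand the quadratic term $H^{\gamma\bar\delta}\,\partial_iH_{\alpha\bar\delta}\,\partial_{\bar j}H_{\gamma\bar\beta}$. The three factors contribute $e^{F}$, $e^{-F}$, $e^{-F}$, so the prefactor is $e^{-F}$. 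Contracting with $h^{\gamma\bar\delta}$ and using $h^{\gamma\bar\delta}h_{\alpha\bar\delta}=\delta^\gamma_\alpha$, the bracket becomes
$$h^{\gamma\bar\delta}\partial_ih_{\alpha\bar\delta}\partial_{\bar j}h_{\gamma\bar\beta}-F_{\bar j}\partial_ih_{\alpha\bar\beta}-F_i\partial_{\bar j}h_{\alpha\bar\beta}+F_iF_{\bar j}h_{\alpha\bar\beta}.$$

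Subtracting the two expressions, the mixed first-order terms $F_i\,\partial_{\bar j}h_{\alpha\bar\beta}$ and $F_{\bar j}\,\partial_i h_{\alpha\bar\beta}$ cancel. The terms $F_iF_{\bar j}h_{\alpha\bar\beta}$ cancel as well. What remains is
$$R^H_{i\bar j\alpha\bar\beta}=e^{-F}\bigl(R^h_{i\bar j\alpha\bar\beta}+F_{i\bar j}\,h_{\alpha\bar\beta}\bigr).$$
With the convention $R(u,\bar v,\cdot,\cdot)$ pairing $\ddbar F$ as $F_{i\bar j}u^i\bar v^j$, this is exactly $e^{-F}(R^h+\ddbar F\otimes h)$.

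Alternatively, the statement follows frame-free. The Chern curvature satisfies $\Theta_{E,H}=\Theta_{E,h}+\partial\bar\partial F\cdot\Id_E$, since $\partial\log$ of the scalar factor $e^{-F}$ adds a closed scalar term. Lowering the $\End(E)$ index with $H=e^{-F}h$ then produces the factor $e^{-F}$.

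The only delicate point is bookkeeping of signs and of the normalization $i\partial\bar\partial$ versus the coordinate $F_{i\bar j}$ in the definition of $R$. I would settle it by checking the formula on a line bundle, where $R^h=-\partial\bar\partial\log h$.
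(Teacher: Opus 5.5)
Your proposal is correct: the coordinate computation from the component formula for $R^h_{i\bar j\alpha\bar\beta}$ checks out term by term. The quadratic term uses both contractions $h^{\gamma\bar\delta}h_{\alpha\bar\delta}=\delta^\gamma_\alpha$ and $h^{\gamma\bar\delta}h_{\gamma\bar\beta}=\delta^{\delta}_{\beta}$, and both hold because the inverse matrix is two-sided. The paper states this lemma without proof as a standard fact, so your verification is the routine justification it relies on.

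In your frame-free sketch, the phrase ``adds a closed scalar term'' is imprecise. What actually happens is that the $(1,0)$-connection form changes by $-\partial F\cdot\Id_E$, which is not closed. Because this term is central, the curvature becomes $\Theta_{E,h}-\bar\partial\partial F\cdot\Id_E=\Theta_{E,h}+\partial\bar\partial F\cdot\Id_E$. Pairing with $H=e^{-F}h$ then produces the factor $e^{-F}$.
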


We now recall some basic properties of uniformly RC-positive metrics which will be used in this paper.
\begin{proposition}[{\cite[Proposition 2.9]{Yan20}}]\label{Proposition:Constants in RC-positive}
    Let $(E,h)$ be uniformly RC-positive on a neighborhood of a compact set $K\subset X$.
    Then for any Hermitian metric $g$ on $TX$, there exists a constant $C_1=C_1(K,h,g)>0$ such that for any point $q\in K$, there exists a $g$-unit vector $u\in T_qX$ such that
    $$R^h(u,\bar{u},v,\bar{v})\geq C_1|v|^2_h \quad \text{for every } v\in E_q.$$
    More precisely, 
    $$C_1:=\inf_{q\in K}\sup_{u\in T_qX\setminus\{0\}}\inf_{v\in E_q\setminus\{0\}}\frac{R^h(u,\bar{u},v,\bar{v})}{|u|^2_g|v|^2_h}>0.$$
    On the other hand, there exists a constant $0<C_2<+\infty$ such that for any point $q\in K$, every $g$-unit vector $u\in T_qX$ and every $v\in E_q$,
    $$R^h(u,\bar{u},v,\bar{v})\geq-C_2|v|^2_h.$$
\end{proposition}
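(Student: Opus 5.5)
We need the two bounds in the statement. Throughout, $X$ is the ambient manifold, $U$ is an open neighborhood of $K$ on which $(E,h)$ is uniformly RC-positive, and $K\subset U$ is compact. For each $q$ define
$$\phi(q):=\sup_{u\in T_qX\setminus\{0\}}\inf_{v\in E_q\setminus\{0\}}\frac{R^h(u,\bar u,v,\bar v)}{|u|_g^2|v|_h^2}.$$
By homogeneity, the quotient depends only on $u$ in the $g$-unit sphere bundle $S_g(TX)$ and on $v$ in the $h$-unit sphere bundle $S_h(E)$. Both bundles have compact fibers, and their restrictions to $K$ are compact.

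\textbf{Lower bound $C_2$.} The function $(u,v)\mapsto R^h(u,\bar u,v,\bar v)$ is continuous on the fiber product $S_g(TX)|_K\times_K S_h(E)|_K$. This is a compact space, so the function is bounded below there by some $-C_2$. Rescaling $v$ then gives $R^h(u,\bar u,v,\bar v)\geq -C_2|v|_h^2$ for every $g$-unit $u$ and every $v$. We may take $C_2>0$.

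\textbf{Positivity of $C_1$.} This is the main point.

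First, pointwise positivity. Fix $q\in U$. Uniform RC-positivity gives a $u_q\ne0$ with $R^h(u_q,\bar u_q,v,\bar v)>0$ for all nonzero $v$. We normalize $u_q$ to be $g$-unit. Since $S_h(E)_q$ is compact, $m(q):=\min_{v\in S_h(E)_q}R^h(u_q,\bar u_q,v,\bar v)>0$. Hence $\phi(q)\geq m(q)>0$.

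Second, lower semicontinuity of $\phi$, which lets us pass from pointwise to uniform. Work in a local trivialization near $q$ with smooth frames, extending $u_q$ to a smooth local vector field $u$. The function
$$\psi(x):=\min_{v\in S_h(E)_x}\frac{R^h(u(x),\overline{u(x)},v,\bar v)}{|u(x)|_g^2}$$
is continuous in $x$. Indeed, it is the minimum of a jointly continuous function over a continuously varying compact sphere; after trivializing, this is a minimum over a fixed sphere via $h$-orthonormal frames. Since $\psi(q)=m(q)>0$, there is a neighborhood $V_q$ of $q$ on which $\psi>m(q)/2$. Moreover $\phi\geq\psi$ on $V_q$, so $\phi>m(q)/2$ on $V_q$.

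Finally, cover $K$ by finitely many such $V_{q_1},\dots,V_{q_N}$. This gives $C_1=\inf_K\phi\geq\min_j m(q_j)/2>0$.

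For each $q\in K$, the supremum in $\phi(q)$ is attained. This holds because $u\mapsto\min_v R^h(u,\bar u,v,\bar v)$ is continuous on the compact sphere $S_g(TX)_q$. So there is a $g$-unit $u$ with $R^h(u,\bar u,v,\bar v)\geq\phi(q)|v|^2_h\geq C_1|v|_h^2$ for all $v\in E_q$.

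The only delicate step is the continuity of $\psi$. It follows from the elementary fact that $x\mapsto\min_{s\in S}F(x,s)$ is continuous whenever $F$ is continuous and $S$ is compact. We apply this after identifying the unit spheres $S_h(E)_x$ with a fixed sphere through a smooth local $h$-orthonormal frame.
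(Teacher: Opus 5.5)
Your proof is correct. The paper gives no proof of its own here; it only cites \cite[Proposition 2.9]{Yan20}, and your argument is the standard compactness argument behind that result. It has the same structure as the reasoning the paper uses implicitly elsewhere, for instance when it asserts $c_a>0$ in the proof of Theorem~\ref{Main Thm 1'}: pointwise positivity of $\phi$ on $K$, propagation of a good direction to a neighborhood, a finite subcover giving $C_1>0$, attainment of the supremum on the compact unit sphere, and compactness of the fiber product of unit sphere bundles giving $C_2$.
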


\begin{proposition}[{\cite[Corollary 2.11, Proposition 2.12]{Yan20}}] \label{Proposition:Basic property for uniformly RC-positive vector bundle}
    Let $(E,h)$ be a uniformly RC-positive vector bundle over a compact complex manifold $X$.
    Then 
    \begin{enumerate}
        \item $(E,h)$ is RC-positive;
        \item $E^{\otimes m}$, $S^kE$ and $\Lambda^pE$ are all uniformly RC-positive for $m\in\NN^+, k\in\NN^+$ and $1\leq p\leq \rk{E}$;
        \item every quotient bundle $Q$ of $E$ is uniformly RC-positive;
        \item every line sub-bundle $L$ of $E^*$ is not pseudo-effective.
    \end{enumerate}
\end{proposition}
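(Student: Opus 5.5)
The plan is to check the four items one at a time, directly from the definition and the curvature formulas recalled above. Item (1) is immediate: if $u\in T_qX$ is the vector given by uniform RC-positivity at $q$, then the same $u$ works for every nonzero $v\in E_q$.

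For (2), I will use that the induced metric on $E\otimes F$ has curvature $\Theta_E\otimes 1+1\otimes\Theta_F$. Fix $q$ and the uniform vector $u$ for $E$ at $q$, and choose $h$-orthonormal frames $\{e_i\}$. Write $w=\sum a_{ij}e_i\otimes e_j\in (E\otimes E)_q$. Then
$$R^{E\otimes E}(u,\bar u,w,\bar w)=\sum_j R^h(u,\bar u,w_j,\bar w_j)+\sum_i R^h(u,\bar u,w^i,\bar w^i),$$
where $w_j=\sum_i a_{ij}e_i$ and $w^i=\sum_j a_{ij}e_j$. Each term is $\geq 0$, and at least one is $>0$ when $w\neq 0$. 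The point is that the same $u$ serves both factors, since they are the same bundle $E$. Induction on $m$ gives $E^{\otimes m}$. For $S^kE$ and $\Lambda^pE$, I realise them as the images of the symmetrisation and antisymmetrisation projectors on $E^{\otimes k}$. These projectors are orthogonal and parallel for the Chern connection, so the second fundamental form $\beta$ vanishes. By \eqref{Formula:curvature of subbundle} the curvature of these summands is then the restriction of that of $E^{\otimes k}$, hence uniformly RC-positive with the same $u$.

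For (3), let $Q=E/S$ carry the quotient metric and identify $f\in Q_q$ with its $h$-orthogonal lift in $E_q$. The second line of \eqref{Formula:curvature of subbundle} gives
$$R^{h_Q}(u,\bar u,f,\bar f)=R^h(u,\bar u,f,\bar f)+|\beta^*(\bar u)f|^2_{h_S}\geq R^h(u,\bar u,f,\bar f)>0$$
for the uniform $u$ of $E$ at $q$.

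Item (4) is the main step. A line subbundle $L\subset E^*$ dualises to a surjection $E\to L^*$, so by (3) $L^*$ with the quotient metric is uniformly RC-positive. Let $\alpha:=i\Theta_{L,h_L}$ for the dual smooth metric $h_L$. Then at every $q$ there is a $u$ with $\alpha(u,\bar u)<0$.
\begin{itemize}
\item Suppose $L$ were pseudo-effective, with a singular metric $h_Le^{-\psi}$ such that $T=\alpha+\ddbar\psi\geq 0$ as a current. Here $\psi$ is quasi-psh, hence upper semicontinuous and not identically $-\infty$.
\item Since $X$ is compact, $\psi$ attains its maximum at some point $x$, and $\psi(x)$ is finite.
\item Take the vector $u$ at $x$ and a small holomorphic disc $f:\Delta\to X$ with $f(0)=x$ and $f'(0)=u$. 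Shrinking the disc, $f^*\alpha<0$ on $\Delta$.
\item Then $\ddbar(\psi\circ f)\geq -f^*\alpha>0$ in the sense of currents on $\Delta$. So $\psi\circ f$ is subharmonic, indeed strictly so, and not identically $-\infty$ since it is finite at $0$.
\item It attains an interior maximum at $0$. By the maximum principle $\psi\circ f$ is constant on $\Delta$, which contradicts $\Delta(\psi\circ f)>0$.
\end{itemize}
The delicate points are that the restriction of a quasi-psh function to a disc through a point where it is finite is legitimate, and that no global regularisation is needed. Regularisation would be problematic without a Kähler assumption, which is exactly why I use this local maximum-principle argument instead.
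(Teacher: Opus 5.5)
Your proof is correct. The paper does not prove this proposition itself; it quotes it from \cite{Yan20}. Your items (1)--(3) are the standard direct curvature arguments:
\begin{itemize}
\item the same $u$ serves every tensor factor;
\item the symmetrisation and antisymmetrisation projectors are orthogonal and parallel, so $\beta=0$;
\item for quotients the extra term $|\beta^*(\bar u)f|^2_{h_S}\geq 0$ only helps.
\end{itemize}

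Item (4) is where you take a genuinely different route. Within the paper's framework, (4) follows at once from (3) and Theorem~\ref{Theorem:Characterization of RC-positive line bundle}: $L^*$ is an RC-positive line bundle, hence $(L^*)^*=L$ is not pseudo-effective. That theorem works through Gauduchon metrics. An RC-positive metric on $L^*$ yields a Hermitian metric $\omega$ with $\tr_\omega R>0$. Passing to a conformal Gauduchon metric $\omega_G$ gives $\int_X c_1^{\BC}(L)\wedge\omega_G^{n-1}<0$. But a closed positive current in $c_1^{\BC}(L)$ would pair nonnegatively with $\omega_G^{n-1}$, and this pairing is well defined because $\ddbar\omega_G^{n-1}=0$.

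You replace this with a purely local argument. The quasi-psh weight attains a finite maximum on the compact manifold $X$. Its restriction to a holomorphic disc tangent to a direction where $i\Theta_{L,h_L}$ is negative is then strictly subharmonic with an interior maximum, which is impossible.

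Your version is self-contained: it needs neither Gauduchon's theorem nor Bott--Chern duality, and it uses only the pointwise existence of a negative direction for $L$. The Gauduchon route gives in addition a numerical obstruction ($\deg_{\omega_G}L<0$) and the converse implication. The paper needs that converse elsewhere, for instance to deduce that $-K_X$ is RC-positive on class $\VII_0^+$ surfaces from Teleman's result that $c_1^{\BC}(K_X)$ is not pseudo-effective.
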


For a line bundle, RC-positivity and uniform RC-positivity coincide.
Moreover, there are some equivalent characterizations of RC-positive line bundles.
\begin{theorem}[{\cite[Theorem 4.1]{Yan19}}] \label{Theorem:Characterization of RC-positive line bundle}
    Let $L$ be a line bundle over a compact complex manifold $X$ with $\dim X=n$.
    The following statements are equivalent:
    \begin{enumerate}
        \item the dual line bundle $L^*$ is not pseudo-effective;
        \item there exists a smooth Gauduchon metric $\omega_G$ on $X$ such that
        $$\int_{X}c_1^{\BC}(L)\cdot\omega_G^{n-1}>0,$$
        where $c_1^{\BC}(L)$ is the first Bott--Chern class of $L$.
        \item there exist a smooth Hermitian metric $h$ on $L$ and a Hermitian metric $\omega$ on $X$ such that
        the scalar curvature of the Chern curvature $R^h=-\sqrt{-1}\partial\bar\partial\log h$ with respect to $\omega$ is positive, i.e.
        $$s=\tr_\omega R^h>0;$$
        \item $L$ is RC-positive.
    \end{enumerate}
\end{theorem}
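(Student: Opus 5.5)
The plan is to prove the cycle $(2)\Rightarrow(3)\Rightarrow(4)\Rightarrow(1)\Rightarrow(2)$. The step $(1)\Rightarrow(2)$ is a Hahn--Banach duality statement. The others are elliptic PDE on a Gauduchon manifold and a maximum-principle argument.

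For $(2)\Rightarrow(3)$, fix any smooth metric $h_0$ on $L$ and the Gauduchon metric $\omega_G$. We look for $h=e^{-\varphi}h_0$ with
$$\tr_{\omega_G}\bigl(R^{h_0}+\ddbar\varphi\bigr)=c,\qquad c:=\frac{n\int_X R^{h_0}\wedge\omega_G^{n-1}}{\int_X\omega_G^n}>0,$$
using the conformal change formula of the Lemma (for a line bundle $R^H=R^h+\ddbar F$). The operator $P\varphi=\tr_{\omega_G}\ddbar\varphi$ is second-order elliptic with no zeroth-order term. By the strong maximum principle its kernel is the constants, and since it has index $0$ its cokernel is one-dimensional. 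The Gauduchon condition $\partial\bar\partial\omega_G^{n-1}=0$ gives, by integration by parts, $\int_X \ddbar\varphi\wedge\omega_G^{n-1}=0$. Hence the image of $P$ is exactly the set of $f$ with $\int_X f\,\omega_G^n=0$. The function $c-\tr_{\omega_G}R^{h_0}$ lies in this set by the choice of $c$, so $\varphi$ exists and $h$ has scalar curvature $c>0$.

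For $(3)\Rightarrow(4)$: if $\tr_\omega R^h>0$ at $q$, then $R^h(u,\bar u)>0$ for some $u\in T_qX$, because a trace is positive only if some eigenvalue is. Since $L$ has rank one this single $u$ works for every $v$, so the positivity is automatically uniform.

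For $(4)\Rightarrow(1)$, argue by contradiction. Suppose $L^*$ is pseudo-effective. Then there is a quasi-psh function $\psi$ with $-R^h+\ddbar\psi\ge 0$ as currents. Fix a Hermitian metric $\omega$ on $X$. Demailly's regularization, valid on compact Hermitian manifolds with a loss of positivity, gives for each $\varepsilon>0$ a smooth $\psi_\varepsilon$ with $\ddbar\psi_\varepsilon\ge R^h-\varepsilon\omega$. At a maximum point $q$ of $\psi_\varepsilon$ the complex Hessian is $\le 0$. Therefore $R^h(u,\bar u)\le\varepsilon|u|^2_\omega$ for every $u\in T_qX$. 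But Proposition~\ref{Proposition:Constants in RC-positive} (with $K=X$) yields a uniform $C_1>0$ and, at every point, a unit vector $u$ with $R^h(u,\bar u)\ge C_1$. Taking $\varepsilon<C_1$ gives the contradiction. The regularization input is the only delicate point here, and it is standard.

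For $(1)\Rightarrow(2)$, which I expect to be the main obstacle, I would prove a Lamari-type duality. Consider the closed convex cone $\mathcal P$ of pseudo-effective classes in $H^{1,1}_{\BC}(X,\RR)$, represented by closed positive currents. Pair it against the Aeppli classes $[\omega_G^{n-1}]$ of Gauduchon metrics, which are well defined because $\partial\bar\partial\omega_G^{n-1}=0$. The claim is that a class $\alpha$ is pseudo-effective iff $\int\alpha\wedge\omega_G^{n-1}\ge0$ for every Gauduchon $\omega_G$.

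To prove the nontrivial direction, suppose $\alpha=c_1^{\BC}(L^*)$ contains no current $\theta+\ddbar\psi\ge0$ for a smooth representative $\theta$. Apply Hahn--Banach in the space of real $(1,1)$-currents to separate the compact convex set of normalized positive currents from the affine space $\theta+\ddbar\mathcal D'$. This produces a smooth strictly positive $(n-1,n-1)$-form $\Omega$ that annihilates $\ddbar$-exact currents, so $\partial\bar\partial\Omega=0$, and satisfies $\int\theta\wedge\Omega<0$. By Michelsohn, $\Omega=\omega_G^{n-1}$ for a unique Hermitian metric $\omega_G$, which is then Gauduchon. This gives $\int c_1^{\BC}(L)\wedge\omega_G^{n-1}>0$.

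The care needed is in the topologies of the Hahn--Banach step. One must ensure the separating functional is a smooth form and that the positivity obtained is strict; I would follow Lamari's argument, adapted to Bott--Chern cohomology.
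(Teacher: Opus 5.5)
The paper does not prove this statement; it quotes it from \cite{Yan19}. Your steps (2)$\Rightarrow$(3) and (3)$\Rightarrow$(4) are correct; the former is exactly the solvability statement recorded in Theorem~\ref{Theorem:Solve equations}. The route (1)$\Rightarrow$(2) via a Lamari-type duality plus Michelsohn's theorem is the standard one.

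\textbf{The gap is in (4)$\Rightarrow$(1): the regularization you invoke is false.} Smooth functions $\psi_\varepsilon$ with $\ddbar\psi_\varepsilon\geq R^h-\varepsilon\omega$ for every $\varepsilon>0$ would say precisely that $c_1^{\BC}(L^*)$ is nef, and pseudo-effective does not imply nef. For a counterexample, let $X$ be the blow-up of $\mathbb{P}^2$ at a point, with exceptional curve $E$, and take $L=\mathcal{O}_X(-E)$. Then $L^*=\mathcal{O}_X(E)$ is pseudo-effective. But integrating $-R^h+\ddbar\psi_\varepsilon\geq-\varepsilon\omega$ over $E$ would give $2\pi E^2\geq-\varepsilon\int_E\omega$ for every $\varepsilon$, contradicting $E^2=-1$. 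Demailly's \emph{smooth} regularization only gives a lower bound $-C\lambda_\varepsilon\omega-\delta_\varepsilon\omega$, where $\lambda_\varepsilon$ decreases to the Lelong numbers of the current. This loss is not small where the Lelong numbers are positive, so your maximum-point argument cannot be run with it.

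\textbf{Any of three repairs works.}

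(a) Use Demailly's regularization \emph{with analytic singularities}. It gives quasi-psh $\psi_m$, smooth outside an analytic set $Z_m$ on which $\psi_m=-\infty$, with $\ddbar\psi_m\geq R^h-\varepsilon_m\omega$ and $\varepsilon_m\to0$. The maximum of $\psi_m$ is attained off $Z_m$, where $\psi_m$ is smooth, and your contradiction with $C_1$ then goes through verbatim.

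(b) Avoid regularization altogether. Let $q$ be a maximum point of the upper semicontinuous function $\psi$. Pick $u$ with $R^h(u,\bar u)>0$ and a local potential $\phi$ with $\ddbar\phi=R^h$. Since $\psi-\phi$ is psh, in a chart the function $\zeta\mapsto\psi(q+\zeta u)-\delta|\zeta|^2$ is subharmonic near $0$ for small $\delta>0$. The sub-mean-value inequality on a circle of radius $r$ then gives $\psi(q)\leq\psi(q)-\delta r^2$, a contradiction. This uses only RC-positivity at $q$.

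(c) Close the cycle as (4)$\Rightarrow$(3)$\Rightarrow$(2)$\Rightarrow$(1), which needs no pluripotential theory. First, $\tr_\omega R^h$ is linear in the dual metric $\omega^{-1}$, so local metrics that are very small in a positive direction can be glued by a partition of unity applied to $\omega^{-1}$. Next, for $n\geq2$ the Gauduchon metric $\omega_G=e^f\omega$ satisfies $\tr_{\omega_G}R^h=e^{-f}\tr_\omega R^h>0$, hence $\int R^h\wedge\omega_G^{n-1}>0$. Finally, a positive current $-R^h+\ddbar\psi$ would pair nonnegatively with $\omega_G^{n-1}$, while $\int\ddbar\psi\wedge\omega_G^{n-1}=0$.

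\textbf{A smaller correction in (1)$\Rightarrow$(2).} You must separate the normalized positive currents from the closed convex \emph{cone} $\RR_{\geq0}\theta+\ddbar\mathcal{D}'$, not from the affine space $\theta+\ddbar\mathcal{D}'$. Disjointness from the affine space is not equivalent to non-pseudo-effectivity: with a Gauduchon mass normalization, every current in $\theta+\ddbar\mathcal{D}'$ has the same mass, so the sets are disjoint whenever that mass differs from $1$. Moreover, a functional bounded on an affine space is merely constant there, so you get no sign. The cone is closed because $\ddbar$ has closed range on distributions. It is disjoint from the normalized positive currents because a positive $\ddbar$-exact current vanishes. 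With the cone you get $\Omega>0$, $\partial\bar\partial\Omega=0$ and $\int\theta\wedge\Omega\leq0$. Strict inequality follows by applying the same argument to $\theta+\epsilon\omega$, which stays non-pseudo-effective for small $\epsilon$ by weak compactness. Set up this way, this is Lamari's lemma, so deferring to it is fine.
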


\begin{proposition}[{\cite[Corollary 3.1]{Yan20}}] \label{Proposition:Yang's vanishing}
    If $(E,h)$ is a uniformly RC-positive vector bundle over a compact complex manifold $X$,
    then
    $$H^0(X,(E^*)^{\otimes m})=0 \quad \text{for every } m\geq 1.$$
\end{proposition}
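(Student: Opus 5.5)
The plan is to prove the vanishing by contradiction, applying the maximum principle to the pointwise norm of a holomorphic section.

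Suppose $s\in H^0(X,(E^*)^{\otimes m})$ is a nonzero section for some $m\geq 1$. By Proposition~\ref{Proposition:Basic property for uniformly RC-positive vector bundle}(2), the bundle $F:=E^{\otimes m}$ with the induced metric $h^{\otimes m}$ is uniformly RC-positive. Its dual is $F^*=(E^*)^{\otimes m}$, so $s$ is a nonzero holomorphic section of $F^*$. Hence it suffices to treat $m=1$: a uniformly RC-positive bundle $(F,h)$ admits no nonzero holomorphic section of $F^*$.

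Let $s\in H^0(X,F^*)$ with the dual metric $h^*$. Set $\varphi=|s|^2_{h^*}$, a smooth nonnegative function on the compact manifold $X$. Choose $q$ where $\varphi$ attains its maximum, and suppose $\varphi(q)>0$. On the open set where $s\neq0$, the standard Bochner-type formula for a holomorphic section reads
$$\partial\bar\partial\log|s|^2_{h^*}(u,\bar u)\;\geq\;-\frac{R^{h^*}(u,\bar u,s,\bar s)}{|s|^2_{h^*}}.$$
This is the usual inequality $\ddbar\log|s|^2\geq -\langle i\Theta s,s\rangle/|s|^2$, obtained by dropping the nonnegative term $(|D's|^2|s|^2-|\langle D's,s\rangle|^2)/|s|^4$ given by Cauchy--Schwarz.

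Next I use $R^{h^*}=-(R^h)^{T}$. At $q$, the covector $s(q)\in F_q^*$ is nonzero, so the line $\ker s(q)^{\perp}$ is spanned by its $h$-dual vector $v\in F_q$. With this choice,
$$R^{h^*}(u,\bar u,s,\bar s)=-R^h(u,\bar u,v,\bar v)\cdot(\text{positive normalization}).$$
Uniform RC-positivity supplies a $u\in T_qX$ with $R^h(u,\bar u,v,\bar v)>0$. Therefore $\partial\bar\partial\log\varphi(u,\bar u)>0$ at $q$.

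This contradicts the maximum, since $\log\varphi$ has an interior maximum at $q$. Restricting to a small holomorphic disc through $q$ in direction $u$, the function $\log\varphi$ would be strictly subharmonic and maximal at the center, which is impossible. Hence $\varphi(q)=0$, so $\varphi\equiv0$ and $s=0$.

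The main step to check is the sign convention linking $R^{h^*}$ to $R^h$ on the dual vector. Only plain RC-positivity is needed at the maximum point, since the $u$ may depend on $v$. Uniformity enters only through Proposition~\ref{Proposition:Basic property for uniformly RC-positive vector bundle}(2), which ensures that the tensor power $E^{\otimes m}$ is still RC-positive; RC-positivity alone is not known to be preserved under tensor powers.
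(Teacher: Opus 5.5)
Your argument is correct and is essentially the standard proof of the cited result: the paper only quotes \cite[Corollary 3.1]{Yan20}, and that result follows the same two steps (uniform RC-positivity passes to $E^{\otimes m}$, then the maximum principle for $|s|^2_{h^*}$ with $R^{h^*}(u,\bar u,s,\bar s)=-R^h(u,\bar u,v,\bar v)$ shows an RC-positive bundle has no nonzero section of its dual). Your remark that only plain RC-positivity is used at the maximum point, with uniformity needed just for the tensor-power step, is also right; the paper's K3 example shows RC-positivity alone does not pass to $(TX)^{\otimes 2}$.
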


It is well known that on a rationally connected projective manifold $X$, one has
$$H^0(X,(T^*X)^{\otimes m})=0 \quad \text{for every } m\geq 1.$$
We collect some characterizations of rationally connected manifolds in terms of differential-geometric positivity notions.

\begin{theorem}[\cite{Yan20,LZZ25}] \label{Theorem:Characterization of rationally connected manifolds}
    Let $X$ be a projective manifold.
    \begin{enumerate} 
        \item  If the tangent bundle $TX$ is uniformly RC-positive,
        then $X$ is rationally connected.
        \item $X$ is rationally connected if and only if the line bundle $\mathcal{O}_{\Lambda^pT^*X}(-1)$ is uniformly RC-positive for every $1\leq p\leq \dim X$, where $\mathcal{O}_{\Lambda^pT^*X}(-1)$ is the dual bundle of the tautological line bundle $\mathcal{O}_{\Lambda^pT^*X}(1)$ of $\mathbb{P}(\Lambda^pT^*X)$, the projective bundle of hyperplanes of $\Lambda^pT^*X$.
        \item $X$ is rationally connected if and only if $TX$ is mean curvature positive, 
        that is, there exist a Hermitian metric $\omega$ on $X$ and a Hermitian metric $h$ on $TX$ such that
        $i\Lambda_\omega\Theta_{TX,h}>0$.
    \end{enumerate}
\end{theorem}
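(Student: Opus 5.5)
The statement collects three results from \cite{Yan20,LZZ25}. My plan is to reduce every implication to the algebraic criterion of Campana--Demailly--Peternell \cite{CDP15}: $X$ is rationally connected if and only if no invertible subsheaf $\mathcal F\subset\Omega^p_X$, $1\leq p\leq n$, is pseudo-effective. The curvature results of this section serve as the bridge to that criterion.

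For (1), assume $TX$ is uniformly RC-positive. By Proposition~\ref{Proposition:Basic property for uniformly RC-positive vector bundle}(2), $\Lambda^pTX$ is uniformly RC-positive for every $p$. Let $\mathcal F\subset\Omega^p_X$ be invertible and replace it by its saturation; this only makes it larger, so pseudo-effectivity is preserved. Outside a subset of codimension $\geq2$, $\mathcal F$ is then a line subbundle of $(\Lambda^pTX)^*$. I would then repeat the argument of Proposition~\ref{Proposition:Basic property for uniformly RC-positive vector bundle}(4) on this big open set, which is legitimate because singular metrics and the required integrations ignore codimension-two sets. This shows that $\mathcal F$ is not pseudo-effective, and \cite{CDP15} gives rational connectedness.

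For (2), I would use the identification $\pi_*\mathcal O_{\mathbb P(\Lambda^pT^*X)}(1)=\Lambda^pT^*X$ together with Theorem~\ref{Theorem:Characterization of RC-positive line bundle}. The latter says that $\mathcal O(-1)$ is uniformly RC-positive exactly when $\mathcal O(1)$ is not pseudo-effective.
\begin{itemize}
\item[$(\Leftarrow)$] An invertible $\mathcal F\subset\Omega^p_X$ defines a rational section $X\dashrightarrow\mathbb P(\Lambda^pT^*X)$. The pullback of $\mathcal O(1)$ along it contains $\mathcal F$. Hence if $\mathcal F$ were pseudo-effective, then so would be $\mathcal O(1)$, since a movable curve class on $X$ is pushed forward to one on which $\mathcal O(1)$ would be nonnegative. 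Now apply \cite{CDP15}.
\item[$(\Rightarrow)$] Suppose $\mathcal O(1)$ were pseudo-effective. Then for an ample $A$ on $X$, $\mathcal O(m)\otimes\pi^*A^{k}$ has sections for suitable $m\gg k$. Pushing forward gives $H^0(X,S^m\Lambda^pT^*X\otimes A^k)\neq0$, which contradicts the vanishing criterion (5) of the introduction, since $S^m\Lambda^pT^*X$ is a direct summand of $(T^*X)^{\otimes pm}$.
\end{itemize}

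For (3), the direction ``mean curvature positive $\Rightarrow$ rationally connected'' goes as follows. Choose a Gauduchon metric $\omega_G=\varphi\,\omega$ in the conformal class of $\omega$. Then $\Lambda_{\omega_G}=\varphi^{-1}\Lambda_\omega$, so $i\Lambda_{\omega_G}\Theta_{TX,h}>0$. This positivity passes to $\Lambda^pTX$. An invertible $\mathcal F\subset\Omega^p_X$ gives a generically surjective map $\Lambda^pTX\to\mathcal F^*$. By the quotient formula \eqref{Formula:curvature of subbundle}, the term $\beta\wedge\beta^*$ only increases the trace, so $\int c_1(\mathcal F^*)\wedge\omega_G^{n-1}>0$. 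Theorem~\ref{Theorem:Characterization of RC-positive line bundle}(2) then shows that $\mathcal F$ is not pseudo-effective, and \cite{CDP15} concludes.

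The converse is the main obstacle. Starting from rational connectedness, I would use criterion (6) to get a movable class $\alpha$ such that every torsion-free quotient of $TX$ has strictly positive $\alpha$-slope. I would then represent $\alpha$ by $\omega_G^{n-1}$ for a Gauduchon metric $\omega_G$, via the duality between movable and Gauduchon classes. Next, take the Harder--Narasimhan filtration of $TX$ with respect to $\omega_G$; all graded pieces are stable and of positive slope. On each graded piece I would put approximate Hermitian--Einstein metrics (Li--Yau/Uhlenbeck--Yau for Gauduchon metrics, with regularization near the singular locus). These would be glued along the filtration by scaling the off-diagonal second fundamental forms by $\varepsilon\to0$, so that $i\Lambda_{\omega_G}\Theta$ stays close to the positive block-diagonal slopes. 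Controlling the singular sets of the filtration, and the errors from the approximate metrics there, is the delicate part. This is precisely the analysis carried out in \cite{LZZ25}, and I would follow it rather than attempt a new argument.
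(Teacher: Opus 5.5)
The paper gives no proof of this theorem; it only cites Yang and Li--Zhang--Zhang. Your outline follows the same route as those sources: reduce everything to the Campana--Demailly--Peternell criteria, and get the hard half of (3) by applying the LZZ theorem to a Gauduchon metric with $\mu_L(TX,\omega_G)>0$. Parts (1), (2)($\Rightarrow$) and (3)($\Leftarrow$) are fine in outline.

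\textbf{The gap is in (2)($\Leftarrow$).} The argument there does not work, for two reasons.
\begin{enumerate}
\item With the statement's convention $\pi_*\mathcal{O}(1)=\Lambda^pT^*X$, a rational section $s$ of $\mathbb{P}(\Lambda^pT^*X)$ is the same as a rank-one \emph{quotient} of $\Lambda^pT^*X$, namely $s^*\mathcal{O}(1)$. A line subsheaf $\mathcal{F}\subset\Omega^p_X$ instead gives a section of the bundle of lines $\mathbb{P}(\Lambda^pTX)$. In neither convention does $s^*\mathcal{O}(1)$ contain $\mathcal{F}$.
\item More seriously, the inference ``$\mathcal{O}(1)\cdot s_*\gamma\geq0$ for all movable $\gamma$ on $X$, hence $\mathcal{O}(1)$ is pseudo-effective'' is invalid. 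BDPP requires testing against all movable classes of $\mathbb{P}(\Lambda^pT^*X)$. Curves lying on the proper subvariety $s(X)$ (when $p<n$) are not movable there. Positivity along a section proves nothing: $\mathcal{O}(-1,1)$ on $\mathbb{P}^1\times\mathbb{P}^1$ restricts to $\mathcal{O}(1)$ on the section $\{\mathrm{pt}\}\times\mathbb{P}^1$ of the second projection, yet it is not pseudo-effective.
\end{enumerate}

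\textbf{The repair is short and avoids sections.} The inclusion $\mathcal{F}\hookrightarrow\Lambda^pT^*X=\pi_*\mathcal{O}(1)$ is a nonzero element of
$H^0(X,\Lambda^pT^*X\otimes\mathcal{F}^{-1})=H^0(\mathbb{P}(\Lambda^pT^*X),\mathcal{O}(1)\otimes\pi^*\mathcal{F}^{-1})$.
Hence $\mathcal{O}(1)\simeq\pi^*\mathcal{F}\otimes\mathcal{O}(D)$ with $D$ effective, and $\mathcal{O}(1)$ is pseudo-effective whenever $\mathcal{F}$ is.

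\textbf{Two smaller points in (3)($\Rightarrow$).}
\begin{enumerate}
\item A class $[\omega_G^{n-1}]$ is strictly positive on every nonzero pseudo-effective class. So a movable class on the boundary of the movable cone is not represented by any Gauduchon metric. You should first replace $\alpha$ by $\alpha+\epsilon[\omega^{n-1}]$ with $\omega$ K\"ahler. This keeps the slope inequality for small $\epsilon$, because the $\omega$-degrees of quotient sheaves of $TX$ are bounded below. Then apply the Lamari-type duality you have in mind.
\item Harder--Narasimhan graded pieces are only semistable, not stable. You therefore need approximate Hermitian--Einstein metrics for semistable sheaves, which is what LZZ use.
\end{enumerate}

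\textbf{On (1).} This part needs no separate argument. Choose local vector fields $u_j$ realizing uniform RC-positivity and a partition of unity $\chi_j$, and set $\omega^{-1}=\sum_j\chi_j u_j\otimes\bar{u}_j+\epsilon g^{-1}$. This turns a uniformly RC-positive $h$ into a Hermitian $\omega$ with $\tr_\omega R^h>0$. Your degree argument from (3)($\Leftarrow$), with the quotient metric on $\mathcal{F}^*$ degenerating only in codimension two, then applies verbatim. That is also the precise content of ``repeat the argument on the big open set''.
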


Li--Zhang--Zhang  establish an equivalence between mean curvature positivity and so-called HN-positivity. 

Assume $(X,\omega)$ is a compact Gauduchon manifold and $\mathcal{F}$ is a coherent sheaf over $X$. 
The $\omega$-degree of $\mathcal{F}$ is given by
$$\deg_\omega(\mathcal{F}):=\deg_\omega(\det\mathcal{F})=\int_{X}c_1^{\BC}(\det\mathcal{F})\wedge\frac{\omega^{n-1}}{(n-1)!}
=\frac{1}{2\pi}\int_{X}i\Theta_{\det\mathcal{F},h}\wedge\frac{\omega^{n-1}}{(n-1)!},$$
where $h$ is an arbitrary Hermitian metric on $\det\mathcal{F}$.
The $\omega$-slope of $\mathcal{F}$ is defined by 
$$\mu_\omega(\mathcal{F}):=\frac{\deg_\omega(\mathcal{F})}{\rk\mathcal{F}}.$$
Let $E$ be a holomorphic vector bundle over $X$.
Define
$$\mu_L(E,\omega):=\inf\{\mu_\omega(\mathcal{Q})~|~ \mathcal{Q} \text{ is a coherent quotient sheaf of } E, ~\rk\mathcal{Q}>0\}.$$
Then for a given Gauduchon metric $\omega$, $E$ is called $\omega$-$\HN$-positive if $\mu_L(E,\omega)>0$.
The bundle $E$ is called $\HN$-positive if there is a Gauduchon metric $\omega$ on $X$ such that $\mu_L(E,\omega)>0$.

\begin{theorem}[{\cite[Theorem~1.4]{LZZ25}}]\label{Theorem:Mean curvature positivity and HN-positivity}
    Let $(X,\widetilde{\omega})$ be a compact Hermitian manifold and $E$ a holomorphic vector bundle over $X$. 
    Then there exists a Hermitian metric $h$ such that the mean curvature $\sqrt{-1}\Lambda_{\widetilde{\omega}}\Theta_{E,h}$ is positive 
    if and only if $\mu_L(E,\omega)>0$ where $\omega$ is a Gauduchon metric conformal to $\widetilde{\omega}$.
\end{theorem}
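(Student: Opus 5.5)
The first step is to reduce to a single Gauduchon metric. If $\omega=e^{\varphi}\widetilde\omega$, then $\Lambda_{\omega}=e^{-\varphi}\Lambda_{\widetilde\omega}$. Hence $\sqrt{-1}\Lambda_{\widetilde\omega}\Theta_{E,h}>0$ holds if and only if $\sqrt{-1}\Lambda_{\omega}\Theta_{E,h}>0$, and I may work with $\omega$ throughout. Since $\partial\bar\partial\omega^{n-1}=0$, the $\omega$-degree of a coherent sheaf is well defined: it is independent of the metric chosen on $\det\mathcal Q$, and it can be computed on the locus where $\mathcal Q$ is locally free, because the complement has codimension at least $2$ for torsion-free quotients.

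For the direction mean curvature positive $\Rightarrow$ HN-positive, fix $h$ with $\sqrt{-1}\Lambda_\omega\Theta_{E,h}\geq c\,\Id$ for some $c>0$, which exists by compactness. Let $\mathcal Q$ be a coherent quotient of positive rank. I may pass to its torsion-free part, since the torsion only adds an effective contribution to $\det$. Outside an analytic set $Z$ of codimension $\geq 2$, $\mathcal Q$ is a quotient bundle of $E$. By the formula $i\Theta_{Q,h_Q}=i\Theta_{E,h}|_Q+i\beta\wedge\beta^*$ preceding \eqref{Formula:curvature of subbundle}, the trace of $\Lambda_\omega(i\beta\wedge\beta^*)$ is $|\beta^*|^2\geq 0$. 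Therefore
$$\tr\big(\sqrt{-1}\Lambda_\omega\Theta_{Q,h_Q}\big)\geq c\,\rk\mathcal Q \quad\text{on } X\setminus Z.$$
Integrating against $\omega^n/n!$ gives $\mu_\omega(\mathcal Q)\geq c\Vol_\omega(X)/2\pi$, a bound independent of $\mathcal Q$, so $\mu_L(E,\omega)>0$. The one technical point is justifying that the integral of the Chern form of the singular quotient metric computes $\deg_\omega\mathcal Q$. I would handle this either by desingularizing $\mathcal Q$ by blow-ups, or by the standard cut-off argument near $Z$ using $\partial\bar\partial\omega^{n-1}=0$ and the fact that $\beta$ is in $L^2$.

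The converse, HN-positive $\Rightarrow$ mean curvature positive, is the main obstacle. Integral slope information has to be turned into a pointwise curvature bound. I would take the Harder--Narasimhan filtration $0=\mathcal E_0\subset\cdots\subset\mathcal E_k=E$ with respect to the Gauduchon metric $\omega$, which exists by Bruasse. Then $\mu_L(E,\omega)=\mu_\omega(\mathcal E_k/\mathcal E_{k-1})$ is the minimal HN slope. The aim is to show that for every $\varepsilon>0$ there is a smooth $h_\varepsilon$ with
$$\sqrt{-1}\Lambda_\omega\Theta_{E,h_\varepsilon}\geq(\mu_L-\varepsilon)\Id$$
pointwise.

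My first attempt would be Donaldson's heat flow, or the perturbed continuity method $\sqrt{-1}\Lambda_\omega\Theta_{h}-\lambda\Id+\varepsilon\log(h_0^{-1}h)=0$ with Li--Yau / Uhlenbeck--Yau estimates adapted to Gauduchon metrics. The idea is to show that along the flow the eigenvalues of the mean curvature converge to the HN type, as in the approximate Hermitian--Einstein results of Sibley and Jacob. This convergence is however only in $L^p$. To upgrade it to a lower bound that holds in $C^0$, I would use that the smallest eigenvalue of $\sqrt{-1}\Lambda_\omega\Theta_{h_t}$ satisfies a parabolic subsolution inequality along the flow. A maximum-principle and mean-value argument for this inequality should then convert the $L^1$ smallness of its negative part below $\mu_L-\varepsilon$ into a uniform pointwise lower bound.

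If the flow approach fails, the fallback is more direct. I would resolve the singularities of the HN filtration by blow-ups so that it becomes a filtration by subbundles. On each graded piece I would put an approximate Hermitian--Einstein metric, whose mean curvature is close to its slope $\geq\mu_L$. I would then glue these with the weighted metrics $h=\bigoplus t^{2j}h_j$, with $t\gg1$, so that the second fundamental form terms become negligible. Finally, I would push the construction back down to $X$ with a small error controlled by a conformal correction, solving $\Delta_\omega f=g-\overline g$ to flatten the trace. Either way, taking $\varepsilon<\mu_L$ yields the required metric.
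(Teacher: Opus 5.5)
The paper gives no proof of this statement. It is quoted from \cite[Theorem~1.4]{LZZ25}, so your proposal has to stand on its own.

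Your reduction to the conformal Gauduchon metric is correct, and so is the direction ``mean curvature positive $\Rightarrow$ $\mu_L(E,\omega)>0$''. Passing to $\mathcal{Q}$ modulo torsion can only lower the degree. The extra term $\tr\Lambda_\omega(i\beta\wedge\beta^*)=|\beta|^2$ has the right sign. The codimension-two locus where the quotient is not a bundle does not contribute, because $\partial\bar\partial\omega^{n-1}=0$.

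The gap is in the converse, exactly where the content of the theorem lies. You assert, ``as in Sibley and Jacob'', that along the flow the eigenvalues of $\sqrt{-1}\Lambda_\omega\Theta_{E,h_t}$ converge in $L^1$ to the Harder--Narasimhan type of $E$ with respect to the Gauduchon metric $\omega$. Those are K\"ahler results, and their proofs use the K\"ahler condition essentially. For instance, $\|F\|_{L^2}$ is controlled by $\|\Lambda_\omega F\|_{L^2}$ only up to a term that is topological when $\omega$ is K\"ahler. That bound is what feeds the compactness and Bando--Siu analysis on a resolution of the HN filtration. On a non-K\"ahler Gauduchon manifold, this approximate critical Hermitian structure is a separate theorem, which you neither prove nor cite in a usable form.

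Your fallback does not avoid this problem:
\begin{enumerate}
\item It still needs approximate Hermitian--Einstein metrics on the semistable HN quotients, which is the same kind of input.
\item After pulling back to a resolution, $\pi^*\omega$ degenerates along the exceptional divisors, so stability must be re-established for a perturbed Gauduchon metric.
\item A metric on $\pi^*E$ does not descend to a smooth metric on $E$, so ``a small error controlled by a conformal correction'' is not yet an argument.
\end{enumerate}

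There is also a sign slip. The function $\lambda_{\min}$ is concave in the endomorphism, so along Donaldson's flow it is a supersolution. The subsolution you want is $(\mu_L-\varepsilon-\lambda_{\min})^+$.

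More importantly, the parabolic mean-value step is unnecessary. Suppose some smooth $h$ has $\int_X\lambda_{\min}\,\omega^n>0$, where $\lambda_{\min}$ is the smallest eigenvalue of $\sqrt{-1}\Lambda_\omega\Theta_{E,h}$.
\begin{enumerate}
\item Pick a smooth $g\leq\lambda_{\min}$ with positive mean $\bar g$.
\item Solve $\Lambda_\omega\ddbar\phi=\bar g-g$ by Theorem~\ref{Theorem:Solve equations}.
\item Then $H=e^{-\phi}h$ satisfies $\sqrt{-1}\Lambda_\omega\Theta_{E,H}=\sqrt{-1}\Lambda_\omega\Theta_{E,h}+(\bar g-g)\Id\geq\bar g\,\Id>0$.
\end{enumerate}

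Your Chern--Weil computation also gives $\int_X\lambda_{\min}\,\omega^n/n!\leq 2\pi\mu_L(E,\omega)$ for every $h$. So the theorem is equivalent to a single statement: $\mu_L(E,\omega)>0$ forces $\int_X\lambda_{\min}\,\omega^n>0$ for some $h$. This one-sided $L^1$ approximate critical structure on a Gauduchon manifold is the step you still have to supply.
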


\subsection{A counterexample}
In this subsection, we examine the RC-positivity of the Fermat quartic $K3$ surface.
This example shows why the uniform condition is essential in the rational connectedness theorem.
\begin{example}
    Let $$X:=\{[Z_0:Z_1:Z_2:Z_3]\in\mathbb{P}^3~|~Z_0^4+Z_1^4+Z_2^4+Z_3^4=0\}.$$
    For the metric induced by the Fubini--Study metric on $\mathbb{P}^3$, $TX$ is RC-positive.
    However, $X$ is not rationally connected, $-K_X$ is not RC-positive, and $TX$ admits no
    uniformly RC-positive Hermitian metric.
\end{example}
\begin{proof}
    It is easy to see that $X$ is a smooth projective surface and $TX$ is a holomorphic subbundle of $T\mathbb{P}^3$.
    Let $h_{\FS}$ be the Fubini--Study metric on $T\mathbb{P}^3$ and $h$ the induced metric on $TX$. 
    Note that $h_{\FS}$ is Griffiths positive, that is, at each point $q\in X$, and for any nonzero vectors $u,v\in T_q\mathbb{P}^3$,
    $$R^{h_{\FS}}(u,\bar{u},v,\bar{v})>0.$$
    
    Let $\beta$ be the second fundamental form of $TX$ in $T\mathbb{P}^3$. 
    By formula~\ref{Formula:curvature of subbundle}, at each point $q\in X$, and for any nonzero vectors $u,e\in T_qX$,
    $$R^h(u,\bar{u},e,\bar{e})=R^{h_{\FS}}(u,\bar{u},e,\bar{e})-|\beta(u)(e)|^2_{h_Q},$$
    where $Q=T\mathbb{P}^3/TX$ is the quotient bundle and $h_Q$ is the induced metric on $Q$.
    For any fixed $0\neq e\in T_qX$, $\beta$ induces a complex linear map 
    $$\beta_e:T_qX\longrightarrow Q_q, \quad \beta_e(u)=\beta(u)(e).$$
    This is a linear map from a two-dimensional complex vector space to an one-dimensional one.
    Thus $\dim\ker \beta_e\geq 1$. 
    Choosing $0\neq u\in\ker \beta_e$, one has
    $$R^h(u,\bar{u},e,\bar{e})=R^{h_{\FS}}(u,\bar{u},e,\bar{e})-|\beta(u)(e)|^2_{h_Q}>0.$$
    This shows that $TX$ is RC-positive.
    
    The adjunction formula implies that 
    $$K_X\simeq K_{\mathbb{P}^3}\otimes\mathcal{O}_{\mathbb{P}^3}(X)|_X\simeq\mathcal{O}_{\mathbb{P}^3}(-4)\otimes\mathcal{O}_{\mathbb{P}^3}(4)|_X\simeq\mathcal{O}_X.$$
    Therefore, $-K_X$ is not RC-positive, $TX$ is not uniformly RC-positive, and $X$ is not rationally connected by
    Propositions~\ref{Proposition:Basic property for uniformly RC-positive vector bundle}, \ref{Proposition:Yang's vanishing} and Theorems~\ref{Theorem:Characterization of RC-positive line bundle}, \ref{Theorem:Characterization of rationally connected manifolds}.
\end{proof}

\section{Uniform RC-positivity of rationally connected manifolds}\label{Sec:Equivalence}

In this section, we show that the tangent bundle of a rationally connected manifold is uniformly RC-positive.
An important property of rationally connected manifolds of dimension at least three is the existence of an  embedded very free rational curve.
Our basic idea is to construct a uniformly RC-positive metric by its local deformation family.

\subsection{The embedded very free curve}

Let us recall some basic properties of rationally connected manifolds (see \cite{AK03,Kol96}).

Let $X$ be a projective manifold. Consider the scheme $\Hom(\mathbb{P}^1,X)$.  
For any morphism $f:\mathbb{P}^1\rightarrow X$, write $[f]$ for the corresponding point of $\Hom(\mathbb{P}^1,X)$. 
By \cite[Theorem~5]{AK03}, one has
$$T_{[f]}\Hom(\mathbb{P}^1,X)=H^0(\mathbb{P}^1,f^*TX).$$
Moreover, if $H^1(\mathbb{P}^1,f^*TX)=0$, then $\Hom(\mathbb{P}^1,X)$ is smooth at $[f]$ and has dimension $N=h^0(\mathbb{P}^1,f^*TX)$.

Assume that $X$ is a projective rationally connected manifold of dimension $n\geq 3$. 
There exists an embedding $\iota:\mathbb{P}^1\rightarrow X$ passing through any given finite subset of $X$ such that $\iota^*TX$ is ample, see e.g. \cite[Chapter IV, Theorem~3.9]{Kol96}.
In other words, $$\iota^*TX\simeq\bigoplus_{j=1}^n\mathcal{O}_{\mathbb{P}^1}(a_j), \quad a_j\geq 1.$$
In particular, 
$$H^1(\mathbb{P}^1,\iota^*TX)=0.$$
Therefore, $\Hom(\mathbb{P}^1,X)$ is smooth at $[\iota]$.
Moreover, nearby deformations of a very free rational curve are still very free (see \cite[Section~3, statement after Remark~9]{AK03}).
Furthermore there is a small ball $B\subset \mathbb{C}^N$ centered at $[\iota]$ such that the evaluation morphism 
$F:\mathbb{P}^1\times B\rightarrow X$ is a submersion, see also \cite[Proposition~10]{AK03} (cf. \cite[Chapter II, Corollary~3.5.4]{Kol96}).

\subsection{Analytic construction of the uniformly RC-positive metric}

Before the construction of a uniformly RC-positive metric by the embedded very free curve $[\iota]$, we need the following Lemma~\ref{Lemma:Construct uniform RC-positive metric}.
\begin{lemma}\label{Lemma:Construct uniform RC-positive metric}
    Let $X$ be a connected compact K\"ahler manifold of dimension $n\geq 2$, and let $(E,h)$ be a holomorphic Hermitian vector bundle on $X$. 
    Let $B\subset \mathbb{C}^N$ be a ball and 
    $$F:\mathbb{P}^1\times B\rightarrow X, \quad F(t,b)=\iota_b(t)$$
    a holomorphic submersion.
    Let $\omega_{\FS}$ be the Fubini--Study metric on $\mathbb{P}^1$ with $\int_{\mathbb{P}^1}\omega_{\FS}=1$. 
    Assume there is a constant $\kappa>0$ such that 
    $$i\Theta_{\iota_b^*E,\iota_b^*h}\geq\kappa\omega_{\FS}\otimes\Id_{\iota_b^*E} \quad \text{for every } b\in B.$$
    Then there is a smooth real function $\varphi$ such that $(E,e^{-\varphi}h)$ is uniformly RC-positive.
\end{lemma}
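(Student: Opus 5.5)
The plan is to use the family of curves to get a good direction on an open set, and a conformal change driven by an auxiliary function without critical points to get one everywhere else, in the spirit of \cite[Theorem 1.6]{Yan20}.

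\emph{Step 1: uniform RC-positivity on the image of the family.} Since $F$ is a holomorphic submersion, its image $U:=F(\mathbb{P}^1\times B)$ is open. Fix a smaller closed ball $\bar B'\subset B$ and set $K:=F(\mathbb{P}^1\times\bar B')$, a compact set containing a nonempty open set. For $q=\iota_b(t)\in U$, take $u:=d\iota_b(\partial_t)\neq 0$, where nonvanishing comes from the submersion property. The hypothesis gives
$$R^h(u,\bar u,v,\bar v)=\big\langle i\Theta_{\iota_b^*E,\iota_b^*h}(\partial_t,\bar\partial_t)v,\bar v\big\rangle\geq\kappa|\partial_t|^2_{\FS}|v|^2_h>0$$
for every $v\neq 0$. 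Hence $(E,h)$ is uniformly RC-positive on a neighbourhood of $K$. Proposition~\ref{Proposition:Constants in RC-positive}, applied with a fixed Hermitian metric $g$ on $X$, then gives a constant $C_1>0$ and a $g$-unit direction $u_q$ for each $q\in K$. It also gives a global lower bound $R^h\geq -C_2|u|_g^2|v|_h^2$ on all of $X$. Here I use compactness of $X$ and the second half of that proposition, whose lower bound does not need positivity.

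\emph{Step 2: an auxiliary function.} Since $X$ is connected, take a Morse function and compose it with a diffeomorphism isotopic to the identity so that all its critical points lie in the interior of $K$. Call the result $\psi$. Then $|\partial\psi|_g\geq\delta>0$ on $X\setminus\mathring K_0$, where $K_0\subset\mathring K$ is a compact neighbourhood of the critical set. Near each critical point I would further normalize $\psi$ in a chart adapted to the curve direction $u$, for instance $\psi=c+|z_1|^2+Q(z_2,\dots,z_n)$ or $c+\mathrm{Re}(z_1^2)+Q$. The aim is that $i\partial\bar\partial\psi(u,\bar u)\geq 0$ at every critical point except the maxima.

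\emph{Step 3: the conformal change and the balancing of constants.} Put $\varphi=-A\,g_\lambda(\psi)$, so that by the conformal change lemma
$$R^{e^{-\varphi}h}=e^{-\varphi}\Big(R^h+A\big(g_\lambda'(\psi)\,\ddbar\psi+g_\lambda''(\psi)\,i\partial\psi\wedge\bar\partial\psi\big)\otimes h\Big).$$
At $q\notin K_0$, choose $u$ with $|\partial\psi(u)|\geq\delta|u|_g$. The bracket is then at least $A(g_\lambda''\delta^2-C|g_\lambda'|)$, which beats $C_2$ once $g_\lambda''$ dominates $\lambda|g'_\lambda|$ and $\lambda\gg1$. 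At $q\in K$, choose the curve direction $u_q$; I need the perturbation in that direction to be bounded below by $-C_1/2$, which forces $A|g'_\lambda|$ to be small on $\psi(K)$.

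I expect exactly this balancing to be the main obstacle. A naive choice $g_\lambda=\lambda^{-1}e^{\lambda\psi}$ fails, because the values of $\psi$ on $K$, in particular at its maximum, may exceed those on $X\setminus K$, producing an $e^{\lambda}$ mismatch. What I would try first is to arrange $\psi$ so that the top sublevel set $\{\psi\geq 1-\delta\}$ is a small ball inside $K$, and to bend $g_\lambda$ on $[1-\delta,1]$ so that $g'_\lambda\leq 0$ there. Then $g'_\lambda\,\ddbar\psi(u,\bar u)\geq 0$ near the maximum, where $\psi$ is strictly concave, while $g''_\lambda$ only needs to be large on $\psi(X\setminus K)\subset[0,1-\delta]$.

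If that bookkeeping fails, the fallback is a different choice of direction on $K\setminus K_0$. There $\partial\psi\neq 0$, so I can use any direction $u$ with $\partial\psi(u)\neq0$ that lies close to $u_q$, since the curve-direction positivity of Step~1 is an open condition. This lets the $g_\lambda''$ term act on all of $X\setminus K_0$. It then remains only to control $K_0$, where $|g'_\lambda|$ is evaluated at the small critical values and can be made less than $C_1/(2AC)$ by normalizing the critical values of $\psi$ on $K_0$ to lie below those on $X\setminus K$.
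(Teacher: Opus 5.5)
\emph{Verdict: the proposal has a genuine gap.} Steps 2--3 cannot be completed. The ``balancing'' you flag is a real obstruction, not bookkeeping.

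Step 1 is fine. After it, however, you only use two facts. First, $(E,h)$ is uniformly RC-positive near a compact set $K$ with nonempty interior. Second, $R^h\geq -C_2|u|_g^2|v|_h^2$ holds globally. You discard the information that each $\iota_b$ is a \emph{closed} curve along which $E$ is positive.

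These two facts do not imply the conclusion. Take $E=\mathcal{O}_X$ and $h=e^{-\phi}$, with $\phi$ strictly plurisubharmonic near a small closed ball $K$; both facts hold. But for every smooth $\varphi$ one has $R^{e^{-\varphi}h}=\ddbar(\phi+\varphi)$. This is negative semidefinite at a maximum point of $\phi+\varphi$, so $e^{-\varphi}h$ is never uniformly RC-positive. This agrees with Theorem~\ref{Theorem:Characterization of RC-positive line bundle}, since $\mathcal{O}_X^*$ is pseudo-effective. Hence no choice of $\psi$, $g_\lambda$, $A$ can work in general.

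You can see the obstruction in your own plan:
\begin{itemize}
\item The fallback requires every critical value of $\psi$ in $K_0$ to lie below $\psi(X\setminus K)$. This is impossible for the global maximum of $\psi$. There $\partial\psi=0$, so only $A\,g_\lambda'(\max\psi)\,\ddbar\psi$ survives, and it is $\le 0$ in every direction as soon as $g'_\lambda(\max\psi)>0$.
\item Making $g'_\lambda\le0$ near the top only relocates the bad point, for instance to the minimum of $\psi$. The example above shows that some bad point always remains.
\end{itemize}

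The missing idea is to use the compactness of the curves globally, through Stokes' theorem on $\mathbb{P}^1$. The paper forms the closed semipositive $(n-1,n-1)$-form $\Omega_0=F_*(p_2^*\mu)$. Then
\[
\int_X\ddbar\phi\wedge\Omega_0=\int_B\Bigl(\int_{\mathbb{P}^1}\iota_b^*\ddbar\phi\Bigr)\mu=0
\]
for every $\phi$. Meanwhile the hypothesis gives $E$ ``degree $\ge\kappa$'' along every curve, recorded by $\nu=F_*(p_1^*\omega_{\FS}\wedge p_2^*\mu)$.

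The paper then sets $\Omega_\varepsilon=\Omega_0+\varepsilon\,\omega^{n-1}/(n-1)!$, which by Michelsohn is $\omega_\varepsilon^{n-1}$ for a Gauduchon metric $\omega_\varepsilon$. It solves the global linear equation $\ddbar\phi_\varepsilon\wedge\Omega_\varepsilon=\kappa(\nu-dV)$, which is solvable because both sides have total mass zero. Put $H=e^{\phi_\varepsilon}h$, $\lambda_h(x;u)=\inf_{v\neq0}R^h(u,\bar u,v,\bar v)/|v|_h^2$, and $M_\varepsilon(x)=\sup_{|u|_\omega=1}\inf_{v\neq0}R^H(u,\bar u,v,\bar v)/|v|_H^2$. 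The pointwise bound $\ddbar\phi_\varepsilon(u,\bar u)\ge\lambda_h(x;u)-M_\varepsilon(x)|u|_\omega^2$ holds. Integrating it along the curves gives, for small $\varepsilon$,
\[
\kappa\nu-\kappa\,dV=\ddbar\phi_\varepsilon\wedge\Omega_\varepsilon\ge\kappa\nu-\varepsilon nC\,dV-M_\varepsilon\,\omega\wedge\Omega_\varepsilon .
\]
Hence $M_\varepsilon\,\omega\wedge\Omega_\varepsilon\ge(\kappa-\varepsilon nC)\,dV>0$, so $M_\varepsilon>0$ everywhere.

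Here the positivity along closed curves enters as a positive degree against a closed form. That is what defeats the maximum-principle obstruction. A Morse-theoretic conformal factor built from data on $K$ alone cannot reproduce it.
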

We first give the following reformulation of the well-known result used in \cite[Section~4]{Yan19} (cf. \cite{Gau77} or \cite[Theorem~2.2]{CTW19}).

\begin{theorem}\label{Theorem:Solve equations}
    Let $X$ be a connected compact complex manifold of dimension $n\geq 2$. 
    Let $\omega_G$ be a Gauduchon metric, and let $dV$ be a smooth positive volume form.
    For $f\in C^\infty(X,\mathbb{R})$, the equations 
    $$\ddbar\phi\wedge\omega_G^{n-1}=f dV, \quad \int_{X}\phi dV=0$$
    have a unique smooth real solution $\phi$ if and only if $\int_{X}f dV=0$.
\end{theorem}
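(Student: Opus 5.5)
The plan is to reduce the equation to a scalar linear elliptic equation and apply Fredholm theory together with the maximum principle; the Gauduchon condition $\partial\bar\partial\omega_G^{n-1}=0$ enters only in identifying the cokernel.

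\textbf{Step 1: reduction.} For any real $(1,1)$-form $\alpha$ one has the pointwise identity
$$\alpha\wedge\omega_G^{n-1}=\tfrac1n(\tr_{\omega_G}\alpha)\,\omega_G^n.$$
Write $\omega_G^n=n\,a\,dV$ with $a>0$ smooth. Then the equation becomes
$$L\phi:=a\,\Delta_{G}\phi=f,\qquad \Delta_G\phi:=\tr_{\omega_G}\ddbar\phi.$$
Here $\Delta_G$ is the Chern Laplacian of $\omega_G$. It is a real, second-order, uniformly elliptic operator with no zeroth-order term (its first-order part comes from the torsion). Hence $L$ has the same properties.

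\textbf{Step 2: necessity.} Suppose a solution exists. Integrating by parts twice, which is legitimate since $X$ is compact without boundary, gives
$$\int_X f\,dV=\int_X\ddbar\phi\wedge\omega_G^{n-1}=\int_X\phi\,\ddbar\omega_G^{n-1}=0$$
by the Gauduchon condition.

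\textbf{Step 3: sufficiency.} View $L:C^{k+2,\alpha}(X)\to C^{k,\alpha}(X)$, or the corresponding Sobolev version.
\begin{enumerate}
    \item $L$ is Fredholm because it is elliptic on a compact manifold. Its index is $0$: the index depends only on the principal symbol, and $L$ can be deformed through elliptic operators to $a\Delta_{g}$ for a Riemannian Laplacian, which is formally self-adjoint up to the positive factor $a$ and hence has index $0$.
    \item The kernel consists of the constants. If $L\phi=0$, then $\phi$ attains its maximum somewhere, and the strong maximum principle (E.~Hopf), applicable since there is no zeroth-order term and $X$ is connected, forces $\phi$ to be constant.
    \item The cokernel is one-dimensional, since the index is $0$ and the kernel is one-dimensional. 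To identify it, compute the $L^2(dV)$ formal adjoint: $\int_X\psi\,L\phi\,dV=\int_X\phi\,\ddbar(\psi\,\omega_G^{n-1})$. By the Gauduchon condition, constants $\psi$ lie in the kernel of $L^*$. Therefore the image of $L$ is exactly $\{f:\int_X f\,dV=0\}$.
\end{enumerate}
So for any such $f$ there is a solution $\phi$. It is smooth by elliptic regularity, and it is real because $L$ is a real operator; one may take the real part if necessary.

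\textbf{Step 4: normalization and uniqueness.} Subtracting the constant $\big(\int_X\phi\,dV\big)/\Vol$ achieves $\int_X\phi\,dV=0$. If two solutions exist, their difference lies in $\ker L$, so it is a constant, and the normalization forces that constant to be zero.

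The main obstacle is Step 3, specifically the index-zero claim combined with the identification of the cokernel. The point to be careful about is that $\Delta_G$ is not self-adjoint, so one cannot read off the image directly. The argument above handles this via index invariance under homotopy of elliptic operators. Alternatively, one can show directly that $\ker L^*$ is spanned by a single positive function (by the maximum principle applied to $L^*$ and index $0$) and then observe that it equals the constants because $L^*1=0$ by the Gauduchon condition.
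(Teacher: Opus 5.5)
Your proof is correct. You reduce the equation to $a\,\Delta_G\phi=f$. You show that this operator is Fredholm of index zero with kernel equal to the constants, using the strong maximum principle on the connected manifold $X$. You then observe that its closed range has codimension one and, by the Gauduchon condition, lies in the hyperplane $\{\int_X f\,dV=0\}$, so it equals that hyperplane. Together these give necessity, existence and uniqueness.

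The paper does not prove this statement; it cites it as well known (Gauduchon; Chu--Tosatti--Weinkove, Theorem~2.2). Your Fredholm-plus-maximum-principle argument is the standard proof behind those references.
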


\begin{proof}[Proof of Lemma~\ref{Lemma:Construct uniform RC-positive metric}]
    Let $p_1:\mathbb{P}^1\times B\rightarrow\mathbb{P}^1$ and $p_2:\mathbb{P}^1\times B\rightarrow B$ denote the two projections.
    Choose a nonnegative smooth form $\mu\in C_c^\infty(B,\Lambda^{N,N})$ with $\int_B\mu=1$.
    Define 
    $$\nu:=F_*(p_1^*\omega_{\FS}\wedge p_2^*\mu)\in A^{n,n}(X) \quad \text{and} \quad \Omega_0:=F_*(p_2^*\mu)\in A^{n-1,n-1}(X).$$
    For a smooth nonnegative function $\psi$ and a smooth semipositive $(1,1)$-form $\alpha$, the definition of pushforward and Fubini's theorem imply that 
    $$\int_{X}\psi\alpha\wedge\Omega_0=\int_B\left(\int_{\mathbb{P}^1}(\psi\circ\iota_b)\iota_b^*\alpha \right)\mu(b)\geq 0.$$
    This shows that $\Omega_0\geq 0$. 
    Similarly, one can obtain $\nu\geq 0$. 
    By a direct computation
    $$\int_{X}\nu=\int_{\mathbb{P}^1\times B}p_1^*\omega_{\FS}\wedge p_2^*\mu=\left(\int_{\mathbb{P}^1}\omega_{\FS}\right)\left(\int_{B}\mu\right)=1.$$
    Note that $d\mu=0$ since $\mu$ is of top degree.
    Therefore, 
    $$d\Omega_0=F_*d(p_2^*\mu)=0.$$
    
    Fix a K\"ahler form $\omega$ on $X$ such that 
    $$\int_{X}dV=1 \quad \text{where } dV=\frac{\omega^n}{n!}.$$
    Write 
    $$\Omega_1=\frac{\omega^{n-1}}{(n-1)!} \quad \text{with } \omega\wedge\Omega_1=ndV.$$
    By compactness, there is a constant $C\geq1$ such that for any point $x\in X$, 
    $$R^h(u,\bar{u},v,\bar{v})\geq -C|u|^2_\omega|v|^2_h \quad \text{for every } u\in T_xX, v\in E_x.$$
    Take $\varepsilon\in(0,\frac{\kappa}{2nC})$ and define $\Omega_\varepsilon:=\Omega_0+\varepsilon\Omega_1$.
    Then $\Omega_\varepsilon>0$ and $d\Omega_\varepsilon=0$.
    In \cite[pp. 279--280]{Mic82}, Michelsohn observed that there is a Gauduchon metric $\omega_\varepsilon$ such that $\omega_\varepsilon^{n-1}=\Omega_\varepsilon$.
    Write $\nu=\rho dV$ for some smooth function $\rho$.
    Then by Theorem~\ref{Theorem:Solve equations}, there is a smooth real function $\phi_\varepsilon$ such that 
    \begin{equation}
        \ddbar\phi_\varepsilon\wedge\Omega_\varepsilon=\kappa(\rho-1)dV=\kappa(\nu-dV) \quad \text{and} \quad \int_X\phi_\varepsilon dV=0.
    \end{equation}    
    Define $\varphi=-\phi_\varepsilon$ and $H=e^{-\varphi}h=e^{\phi_\varepsilon}h$.
    We now show that $(E,H)$ is uniformly RC-positive.
    
    By the formula of conformal change, we have
    $$R^H=e^{\phi_\varepsilon}
    (R^h-\ddbar\phi_\varepsilon\otimes h).$$
    For any point $x\in X$ and any tangent vector $u\in T_xX$, define continuous functions
    \begin{align*}
        \lambda_h(x;u):=\inf_{0\neq v\in E_x}
        \frac{R^h(u,\bar{u},v,\bar{v})}{|v|_h^2} 
        \quad \text{and}\quad 
        \lambda_H(x;u):=\inf_{0\neq v\in E_x}
        \frac{R^H(u,\bar{u},v,\bar{v})}{|v|_H^2}.
    \end{align*}
    Then we have
    $$\lambda_H(x;u)
    =\lambda_h(x;u)-(\ddbar\phi_\varepsilon)(u,\bar{u}).$$
    Define a continuous function on $X$ by
    $$M_\varepsilon(x):=
    \sup_{\substack{u\in T_xX\\ |u|_\omega=1}}
    \lambda_H(x;u).$$
    By the definition, for every $u\in T_xX$, we have
    $$\lambda_H(x;u)\leq M_\varepsilon(x)|u|_\omega^2.$$
    Therefore,
    \begin{equation}
        (\ddbar\phi_\varepsilon)(u,\bar{u})
        \geq \lambda_h(x;u)-M_\varepsilon(x)|u|_\omega^2.
    \end{equation}
    
    We first estimate $\ddbar\phi_\varepsilon\wedge\Omega_0$.
    For any $b\in B$, $t\in\mathbb{P}^1$ and
    $\zeta\in T_t\mathbb{P}^1$, the assumption on
    $(\iota_b^*E,\iota_b^*h)$ implies that
    $$\lambda_h\bigl(\iota_b(t);d\iota_b(\zeta)\bigr)
    \geq\kappa\omega_{\FS}(\zeta,\bar{\zeta}).$$
    Taking $u=d\iota_b(\zeta)$, we obtain
    \begin{align*}
        (\iota_b^*(\ddbar\phi_\varepsilon))
        (\zeta,\bar{\zeta})
        \geq\kappa\omega_{\FS}(\zeta,\bar{\zeta})
        -M_\varepsilon(\iota_b(t))
        (\iota_b^*\omega)(\zeta,\bar{\zeta}).
    \end{align*}
    This means that, as an inequality of continuous real $(1,1)$-forms on
    $\mathbb{P}^1$,
    $$\iota_b^*(\ddbar\phi_\varepsilon)
    \geq\kappa\omega_{\FS}
    -(M_\varepsilon\circ\iota_b)\iota_b^*\omega.$$
    For any smooth nonnegative function $\psi$ on $X$, we have
    \begin{align*}
        \int_X\psi\,\ddbar\phi_\varepsilon\wedge\Omega_0
        &=\int_B\left(\int_{\mathbb{P}^1}(\psi\circ\iota_b)\iota_b^*(\ddbar\phi_\varepsilon)\right)\mu(b)           \\
        &\geq\kappa\int_B\left(\int_{\mathbb{P}^1}
        (\psi\circ\iota_b)\omega_{\FS}\right)\mu(b)
        -\int_B\left(\int_{\mathbb{P}^1}
        ((\psi M_\varepsilon)\circ\iota_b)\iota_b^*\omega\right)\mu(b)     \\
        &=\kappa\int_X\psi\nu-\int_X\psi M_\varepsilon\omega\wedge\Omega_0.
    \end{align*}
    This implies that
    \begin{equation}
        \ddbar\phi_\varepsilon\wedge\Omega_0\geq\kappa\nu-M_\varepsilon\,\omega\wedge\Omega_0.
    \end{equation}
    
    We now estimate $\ddbar\phi_\varepsilon\wedge\Omega_1$.
    By the choice of $C$, we have
    $$\lambda_h(x;u)\geq-C|u|_\omega^2.$$
    Therefore, for every $x\in X$ and $u\in T_xX$,
    $$ (\ddbar\phi_\varepsilon)(u,\bar{u})\geq-(C+M_\varepsilon(x))|u|_\omega^2.$$
    This gives an inequality of Hermitian forms
    $$\ddbar\phi_\varepsilon\geq-(C+M_\varepsilon)\omega.$$
    Since $\Omega_1$ is positive and $\omega\wedge\Omega_1=ndV$, we obtain
    \begin{equation}
        \ddbar\phi_\varepsilon\wedge\Omega_1\geq-n(C+M_\varepsilon)dV.
    \end{equation}
    
    Combining the two estimates and using
    $\Omega_\varepsilon=\Omega_0+\varepsilon\Omega_1$, we have
    \begin{align*}
        \ddbar\phi_\varepsilon\wedge\Omega_\varepsilon
        &\geq
        \kappa\nu-M_\varepsilon\,\omega\wedge\Omega_0
        -\varepsilon n(C+M_\varepsilon)dV\\
        &=\kappa\nu-\varepsilon nC\,dV
        -M_\varepsilon\,\omega\wedge\Omega_\varepsilon.
    \end{align*}
    On the other hand, the construction of $\phi_\varepsilon$ implies that
    $$\ddbar\phi_\varepsilon\wedge\Omega_\varepsilon
    =\kappa\nu-\kappa dV.$$
    Hence, we obtain
    $$M_\varepsilon\,\omega\wedge\Omega_\varepsilon
    \geq(\kappa-\varepsilon nC)dV
    \geq\frac{\kappa}{2}dV.$$
    Therefore, 
    $$M_\varepsilon(x)\geq\frac{\kappa}{2}\frac{dV}{\omega\wedge\Omega_\varepsilon}(x)>0
    \quad \text{for every } x\in X.$$
    At each point $x$, choose an $\omega$-unit tangent vector $u_x$ at
    which the supremum defining $M_\varepsilon(x)$ is attained.
    Then for every $0\neq v\in E_x$,
    $$R^H(u_x,\bar{u}_x,v,\bar{v})\geq M_\varepsilon(x)|v|_H^2>0.$$
    This shows that $(E,H=e^{-\varphi}h)$ is uniformly RC-positive.
    This completes the proof of Lemma~\ref{Lemma:Construct uniform RC-positive metric}.
    
\end{proof}

We now prove Theorem~\ref{Main Thm 0}.
\begin{theorem}[=Theorem~\ref{Main Thm 0}]\label{Main Thm 0'}
    Let $X$ be a compact K\"ahler manifold.
    If $X$ is rationally connected, then $TX$ is uniformly RC-positive.
\end{theorem}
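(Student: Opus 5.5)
The plan is to reduce to Lemma~\ref{Lemma:Construct uniform RC-positive metric} with $E=TX$. First, a rationally connected compact K\"ahler manifold is projective: $H^0(X,\Omega^2_X)=0$, so $H^{2,0}=0$ and Kodaira's criterion applies. Hence the algebraic facts recalled above are available. We treat separately the cases $\dim X=1$, $\dim X=2$ and $\dim X\geq3$.

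If $\dim X=1$, then $X\cong\mathbb{P}^1$. The Fubini--Study metric has positive curvature, so $TX$ is uniformly RC-positive with any nonzero $u$.

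If $\dim X=2$, then $X$ is a rational surface, obtained from $\mathbb{P}^2$ or a Hirzebruch surface $\mathbb{F}_m$ by finitely many point blow-ups. On $\mathbb{P}^2$ the Fubini--Study metric is Griffiths positive. On $\mathbb{F}_m$ one can use the same lemma-type construction with the fibres of the ruling together with a family of sections, or a direct toric/conformal construction. Then Theorem~\ref{Main Thm 1} propagates uniform RC-positivity through the blow-ups. This case relies on the later blow-up theorem, and the $\mathbb{F}_m$ base case needs an explicit check.

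Main case $n\geq3$. Take an embedded very free curve $\iota:\mathbb{P}^1\to X$, so that $\iota^*TX\simeq\bigoplus_j\mathcal{O}(a_j)$ with all $a_j\geq1$. Choose a small ball $B\ni[\iota]$ in the smooth locus of $\Hom(\mathbb{P}^1,X)$ such that $F:\mathbb{P}^1\times B\to X$ is a submersion. Shrinking $B$, each $\iota_b$ is still a very free embedding, and $\{\iota_b\}$ is a smooth family.

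The key step is to produce a single Hermitian metric $h$ on $TX$ and $\kappa>0$ with
$$i\Theta_{\iota_b^*TX,\iota_b^*h}\geq\kappa\,\omega_{\FS}\otimes\Id \quad \text{for all } b\in B.$$
For the central curve, $\iota^*TX$ carries the direct sum of Fubini--Study-type metrics on the $\mathcal{O}(a_j)$, which is Griffiths positive. Since $\iota$ is an embedding, I would extend this metric from $\iota(\mathbb{P}^1)$ to a neighbourhood $U$, prescribing it along the curve. The extension should also have small normal derivatives, so that the induced metric on $\iota^*TX$ has exactly the positive curvature. To do this, use a tubular neighbourhood and a smooth splitting $TX|_U\cong$ (pullback of $\iota^*TX$ along a retraction), then glue with an arbitrary metric away from $U$ by a partition of unity. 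Curvature of a pullback metric is continuous in $C^2$ data, so after shrinking $B$ (so that $\iota_b(\mathbb{P}^1)\subset U$ and $\iota_b\to\iota$ in $C^\infty$) the uniform bound holds with, say, $\kappa=\tfrac12\min$ of the central bound. Pulling back along the retraction makes the curvature along the central curve exactly right, with no second fundamental form error.

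Lemma~\ref{Lemma:Construct uniform RC-positive metric} then yields $\varphi$ with $(TX,e^{-\varphi}h)$ uniformly RC-positive. The main obstacle I expect is this metric-extension step: making the curvature positivity uniform over the family requires $C^2$ control of $h$ near the curve. I would handle it with the holomorphic-retraction-free approach above, using a smooth retraction and continuity, rather than seeking holomorphic extensions.
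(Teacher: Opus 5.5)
Your cases $n=1$ and $n\geq 3$ follow the paper's proof. In both, you take an embedded very free curve and a global metric $h$ on $TX$ with $\iota^*h$ equal to the split Fubini--Study metric $h_C$. You then use continuity in $b$ on a smaller ball and apply Lemma~\ref{Lemma:Construct uniform RC-positive metric}. The retraction construction and the worry about normal derivatives are unnecessary. For \emph{any} smooth extension with $\iota^*h=h_C$, the pulled-back Chern connection is the Chern connection of $(\iota^*TX,\iota^*h)$, because $\iota$ is holomorphic. Hence $\iota^*(i\Theta_{TX,h})=i\Theta_{\iota^*TX,h_C}$ exactly. No second fundamental form appears, since you pull back the whole bundle rather than restrict to a subbundle. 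The paper simply extends $h_C$ by a smooth extension lemma and a cutoff.

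The real gap is the base case $\mathbb{F}_m$ for $n=2$, which you leave open. The paper closes it with Hitchin's theorem that every Hirzebruch surface carries a K\"ahler metric of positive holomorphic sectional curvature. It combines this with Yang's theorem that such a metric is uniformly RC-positive on $TX$.

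Your lemma-type idea also works, but not with the fibres of the ruling. Along a fibre $f$ one has $T\mathbb{F}_m|_f\simeq\mathcal{O}(2)\oplus\mathcal{O}$. A metric with $i\Theta\geq\kappa\,\omega_{\FS}\otimes\Id$ would induce on the degree-zero quotient $\mathcal{O}$ a quotient metric of positive curvature, which is impossible.

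Use instead a general section $C\in|C_0+kf|$ with $C^2=2k-m\geq 1$; take $k=m$ if $m\geq 1$ and $k=1$ if $m=0$. Then $C$ is a smooth rational curve. Moreover, $T\mathbb{F}_m|_C$ is an extension of $\mathcal{O}(C^2)$ by $TC\simeq\mathcal{O}(2)$, hence ample, so $C$ is an embedded very free curve. Since Lemma~\ref{Lemma:Construct uniform RC-positive metric} only needs $n\geq 2$, your $n\geq 3$ argument then applies verbatim.

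In fact this even removes the dependence on the blow-up theorem in dimension two. On any rational surface $X\to X_0$ with $X_0=\mathbb{P}^2$ or $\mathbb{F}_m$, consider the preimage of a general line, or of such a general section. If it avoids the finitely many points over which $X\to X_0$ is not an isomorphism, it is again an embedded smooth rational curve with positive self-intersection, hence very free.
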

\begin{proof}
    If $n:=\dim X=1$, $X\simeq\mathbb{P}^1$. 
    In this case, the standard Fubini--Study metric gives a Griffiths positive Hermitian metric on $TX$. 
    In particular, $TX$ is uniformly RC-positive.
    
    If $n=2$, $X$ is a rational surface.
    By the classification of minimal rational surfaces, there is a sequence of point blow-ups
    $$X=X_l\longrightarrow X_{l-1}\longrightarrow\cdots\longrightarrow X_0,$$
    where $X_0$ is either $\mathbb{P}^2$ or a Hirzebruch surface.
    Note that the Fubini--Study metric on $\mathbb{P}^2$ has positive holomorphic sectional curvature.
    By Hitchin's result \cite{Hit75}, every Hirzebruch surface also admits a K\"ahler metric with positive holomorphic sectional curvature.
    Hence $TX_0$ is uniformly RC-positive by
    \cite[Theorem~5.1]{Yan20}.
    Applying Theorem~\ref{Main Thm 1} (proved in Section~\ref{Sec:Blow-up}) to the point blow-ups above, we obtain that $TX$ is uniformly RC-positive.
    
   \
   
    We now assume that $n\geq3$.
    
    \medskip
    \noindent\textbf{Step 1: construct a metric via an embedded very free curve.}
    
    By the construction of the embedded very free curve above, there is an embedding
    $$\iota:\mathbb{P}^1\longrightarrow X$$
    such that
    $$\iota^*TX\simeq\bigoplus_{j=1}^n\mathcal{O}_{\mathbb{P}^1}(a_j),\quad a_j\geq1.$$
    Set $C:=\iota(\mathbb{P}^1)$.
    Let $\omega_{\FS}$ be the Fubini--Study metric on
    $\mathbb{P}^1$ with $\int_{\mathbb{P}^1}\omega_{\FS}=1.$
    The standard Fubini-Study metric induce a metric $h_C$ on $\iota^*TX$ through the holomorphic splitting such that 
    $$i\Theta_{\iota^*TX,h_C} \geq2\kappa\omega_{\FS}\otimes\Id_{\iota^*TX}$$
    for some constant $\kappa>0$.
    
    Since $\iota$ is an embedding, we can regard $h_C$ as a smooth
    Hermitian metric on $TX|_C$.
    Consider the smooth real vector bundle of Hermitian forms on $TX$.
    By \cite[Lemma~10.12]{Lee13}, $h_C$ extends to a global smooth section $\widehat{h}$ of this bundle, with
    $$\widehat{h}|_C=h_C.$$
    Since positive definiteness is an open condition, $\widehat{h}$
    is positive definite on an open neighborhood $U$ of $C$.
    Choose a smaller open neighborhood $U_1$ of $C$ such that $U_1\Subset U$.
    By \cite[Proposition~2.25]{Lee13}, there is a smooth function
    $\chi:X\rightarrow[0,1]$ satisfying
    $$\chi=1\text{ on }\overline{U_1}
    \quad\text{and}\quad
    \supp\chi\subset U.$$
    Let $k$ be a fixed global smooth Hermitian metric on $TX$, and set
    $$h:=\chi\widehat{h}+(1-\chi)k.$$
    Then $h$ is a global smooth Hermitian metric on $TX$ satisfying
    $$\iota^*h=h_C.$$
    Therefore,
    $$\iota^*(i\Theta_{TX,h}) =i\Theta_{\iota^*TX,\iota^*h} =i\Theta_{\iota^*TX,h_C}
    \geq2\kappa\omega_{\FS}\otimes\Id_{\iota^*TX}.$$
    
    \medskip
    \noindent\textbf{Step 2: obtain the uniform RC-positivity by Lemma~\ref{Lemma:Construct uniform RC-positive metric}.}
    
    By the construction of the embedded very free curve above, there exist a small ball $B\subset \mathbb{C}^N$ centered at $0$ and a submersion 
    $F:\mathbb{P}^1\times B\rightarrow X$.
    
    For every $b\in B$, write $\iota_b(t)=F(t,b)$ and $h_b:=\iota_b^*h$.
    These metrics are the restrictions of the same smooth metric $F^*h$ on $F^*TX$.
    Define a continuous function on $\mathbb{P}^1\times B$ by
    $$\mu(t,b):=
    \inf_{\substack{
            u\in T_t\mathbb{P}^1,\ |u|_{\omega_{\FS}}=1\\
            v\in(\iota_b^*TX)_t,\ |v|_{h_b}=1}}
    R^{h_b}_t(u,\bar{u},v,\bar{v}).$$
    By Step 1 and the equality $\iota^*h=h_C$, we have
    $$\mu(t,0)\geq2\kappa
    \quad\text{for every }t\in\mathbb{P}^1.$$
    For each $t\in\mathbb{P}^1$, choose a neighborhood
    $U_t\times V_t$ of $(t,0)$ such that
    $$\mu>\kappa\quad\text{on }U_t\times V_t.$$
    Since $\mathbb{P}^1$ is compact, one can find a smaller ball $B'\subset B$ centered at $0$, such that
    $$R^{h_b}_t(u,\bar{u},v,\bar{v})\geq\kappa|u|_{\omega_{\FS}}^2|v|_{h_b}^2$$
    for every $(t,b)\in\mathbb{P}^1\times B'$,
    $u\in T_t\mathbb{P}^1$ and
    $v\in(\iota_b^*TX)_t$.
    Equivalently,
    $$i\Theta_{\iota_b^*TX,\iota_b^*h}
    \geq\kappa\omega_{\FS}\otimes\Id_{\iota_b^*TX}
    \quad\text{for every }b\in B'.$$
    
    We have therefore verified all the assumptions of
    Lemma~\ref{Lemma:Construct uniform RC-positive metric} with $E=TX$ and the fixed metric $h$ constructed in Step 1.
    Hence $TX$ is uniformly RC-positive.

\end{proof}

\section{Stability under blow-ups} \label{Sec:Blow-up}

In this section, we show that uniform RC-positivity of the tangent bundle is preserved under blow-ups along smooth centers.
We first consider a natural variant of a squared distance function near the center of the blow-up and the behavior of its pull-back near the exceptional divisor.

\begin{lemma}\label{Lemma:auxiliary function 1}
    Let $Y$ be a connected compact complex submanifold of a complex manifold $X$, of dimension $m\geq 0$, and of codimension $r\geq 2$.
    There exist a neighborhood $U_0$ of $Y$, a function $\rho\in C^\infty(U_0,[0,\infty))$, a Hermitian metric $g$ on $TX|_{U_0}$, and constants
    $$c_0,C_P,C_A,\varepsilon>0$$
    with the following properties:
    \begin{enumerate}
        \item $\rho^{-1}(0)=Y$;
        \item on $0<\rho<\varepsilon$,
        \begin{equation} \label{Formula:Prho}
            P_\rho:=\frac{\partial\rho\otimes\bar\partial\rho}{\rho} \quad \text{satisfies} \quad 0\leq P_\rho\leq C_P\cdot g;
        \end{equation}
        \item with $A_\rho:=\ddbar\rho-P_\rho$,
        \begin{equation}\label{Formula:Arho}
             A_\rho\ge-C_A\sqrt\rho\,g;
        \end{equation}
        \item after shrinking $U_0$ and $\varepsilon$ if necessary, for each $q\in U_0\setminus Y$ with $\rho(q)<\varepsilon$, there is a $g$-unit vector $u\in T_qX$ such that
        \begin{equation}\label{Formula:partial rho}
            \partial\rho(u)=0 \quad \text{and} \quad (\ddbar\rho)(u,\bar{u})\geq c_0.
        \end{equation}
    \end{enumerate}
    All tensor inequalities are inequalities of Hermitian forms.
\end{lemma}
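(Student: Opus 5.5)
The plan is to build $\rho$ as a globalized squared distance to $Y$ in the normal directions, using local holomorphic coordinates adapted to $Y$ and a partition of unity. Cover a neighborhood of $Y$ by finitely many charts $W_\alpha$ with holomorphic coordinates $(z_\alpha,w_\alpha)\in\CC^m\times\CC^r$ such that $Y\cap W_\alpha=\{w_\alpha=0\}$. Choose a partition of unity $\{\chi_\alpha\}$ subordinate to this cover on a neighborhood of $Y$ (compactness of $Y$ makes the cover finite), and set
$$\rho:=\sum_\alpha\chi_\alpha|w_\alpha|^2 .$$
Equivalently, $\rho=|s|^2_{h_N}$ for a smooth Hermitian norm on the normal bundle, read through a tubular identification. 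Property (1) is immediate. Let $g$ be the standard Euclidean metric in the charts, glued by the same partition of unity.

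\textbf{Step 1: the transition structure.} On overlaps $w_\beta=M_{\beta\alpha}(z,w)\,w_\alpha$ for a holomorphic invertible matrix $M_{\beta\alpha}$, because $w_\beta$ vanishes on $\{w_\alpha=0\}$. Hence in any single chart $W_\alpha$ we can write $\rho=\langle H(z,w)w,w\rangle$, with $w=w_\alpha$ and $H$ a smooth positive definite Hermitian matrix. Near $Y$ we have $\rho\asymp|w|^2$.

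\textbf{Step 2: derivatives and the bound on $P_\rho$.} Differentiating gives
$$\partial\rho=\langle H\,dw,w\rangle+O(|w|^2),$$
where $O(\cdot)$ means terms whose coefficients are bounded by that power. Thus $|\partial\rho|_g\le C|w|\le C'\sqrt\rho$, so $P_\rho\le C_P\,g$ by Cauchy--Schwarz. Positivity $P_\rho\ge0$ is trivial, which gives (2).

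\textbf{Step 3: the Levi form.} Compute
$$\ddbar\rho=\image\,H_{j\bar k}\,dw^j\wedge d\bar w^k+O(|w|)\cdot(\text{forms}).$$
The mixed terms come from $\partial H$ paired with one factor of $w$. The term $\partial\bar\partial H$ is paired with $|w|^2$. Write $L:=\image H_{j\bar k}\,dw^j\wedge d\bar w^k$ and $\sigma:=\langle H\,dw,w\rangle$. Then $L\ge0$, and Cauchy--Schwarz in the $H$-inner product gives
$$\frac{\sigma\otimes\bar\sigma}{\rho}\le L .$$
Writing $\partial\rho=\sigma+\tau$ with $|\tau|\le C|w|^2$, we get
$$\frac{\partial\rho\otimes\bar\partial\rho}{\rho}\le\frac{\sigma\otimes\bar\sigma}{\rho}+O(|w|)\,g\le L+O(\sqrt\rho)\,g .$$
Hence $A_\rho\ge -C_A\sqrt\rho\,g$, which is (3).

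\textbf{Main obstacle.} This step is the delicate one. Every error term must be tracked to be $O(|w|)=O(\sqrt\rho)$ and not merely $O(1)$. The terms involving $\partial\chi_\alpha$ need care: they must be bounded using $\partial\chi_\alpha\cdot|w_\alpha|^2$, and the contributions $\partial\bar\partial\chi_\alpha\,|w_\alpha|^2$ must be seen to be $O(\rho)$. This is why I would first rewrite $\rho$ in a single chart as $\langle Hw,w\rangle$, so that all of the $\chi$-dependence is absorbed into the smooth matrix $H$.

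\textbf{Step 4: a good direction.} For (4), note that the map $u\mapsto\partial\rho(u)$ is a single linear form. On $T_qX$ with $r\ge2$ normal directions, we can pick $u$ in the $w$-directions that is $H$-orthogonal to $w$, i.e. $\langle Hu,w\rangle=0$. Then $\sigma(u)=0$ and $L(u,\bar u)\ge c|u|^2$, while $\partial\rho(u)=O(|w|^2)$.

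To make $\partial\rho(u)=0$ exactly, we correct $u$. The kernel of $\partial\rho$ has dimension at least $n-1$, and since $\partial\rho\ne0$ off $Y$ for small $\rho$, it is a hyperplane. Take $u$ to be the $g$-normalized projection of the chosen vector onto $\ker\partial\rho$. The correction has size $O(|w|^2)/|\partial\rho|$. Here $|\partial\rho|\gtrsim|w|$, since $|\sigma(\bar w\text{-direction})|\sim|w|$. So the correction is $O(|w|)$, and therefore
$$\ddbar\rho(u,\bar u)\ge c-O(\sqrt\rho)\ge c_0$$
after shrinking $\varepsilon$. The hypothesis $r\ge2$ is exactly what provides a normal direction orthogonal to $w$.
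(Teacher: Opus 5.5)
Your argument is correct and shares the paper's skeleton: the same $\rho=\sum_\alpha\chi_\alpha|w_\alpha|^2$, Cauchy--Schwarz for a Hermitian quadratic form in $w$ to bound $P_\rho$ by the normal Levi block, and a normal vector in $\ker\partial\rho$ for (4). The bookkeeping differs in two places.

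\textbf{Property (3).} The paper freezes coefficients along $Y$. It writes $\rho=Q_y(w)+E$ with $Q_y(w)=\sum_\alpha\chi_\alpha(y,0)|A_\alpha(y)w|^2$ and $E=O(s^3)$, and applies Cauchy--Schwarz to $Q_y$. It must then compare $P_\rho$ with $\beta\otimes\bar\beta/Q_y$ using $|1/\rho-1/Q_y|=O(s^{-1})$.

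Your exact factorization $\rho=\langle H(z,w)w,w\rangle$, obtained from $w_\beta=M_{\beta\alpha}w_\alpha$, lets Cauchy--Schwarz act on $\rho$ itself. Every error term is then a derivative of $H$ times the appropriate power of $|w|$, which is slightly cleaner. You should note, however, that this factorization is only local: it holds on sets star-shaped in $w$ where the relevant $M_{\beta\alpha}$ are invertible. So one works near each point of $Y$ and uses compactness for uniform constants, rather than ``in any single chart''.

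\textbf{Property (4).} Your projection step is valid: you take a vector $H$-orthogonal to $w$ and correct it by $O(|w|^2)/|\partial\rho|=O(|w|)$, using $|\partial\rho|\gtrsim|w|$. It is avoidable, though. In your own setup the $w$-block of $\ddbar\rho$ is $H+O(|w|)$, which is uniformly positive definite. Also, $\partial\rho$ restricted to the span of $\partial/\partial w^1,\dots,\partial/\partial w^r$ is a single linear form on an $r$-dimensional space, so its kernel has dimension at least $r-1\geq1$. Any $g$-unit vector $u$ in that kernel already satisfies $\partial\rho(u)=0$ and $\ddbar\rho(u,\bar u)\geq c_0$. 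This is the paper's argument, and it needs no lower bound on $|\partial\rho|$.
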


\begin{proof}
    Take finitely many relatively compact adapted holomorphic charts $U_\alpha$ with coordinates
    $$(y_\alpha,w_\alpha)\in\CC^m\times\CC^r, \quad Y\cap U_\alpha=\{w_\alpha=0\}.$$
    Take a neighborhood $U_1\Subset\bigcup U_\alpha$ of $Y$.
    By shrinking the neighborhood and using the partition of unity, one can find nonnegative smooth functions $\chi_\alpha$ satisfying
    $$\supp \chi_\alpha\Subset U_1\cap U_\alpha \quad \text{and} \quad \sum\chi_\alpha=1 \text{ on } U_0,$$
    where $U_0\Subset U_1$ is a smaller neighborhood of $Y$. 
    Set $$\rho:=\sum\chi_\alpha|w_\alpha|^2.$$
    Then $\rho$ is a smooth function on $U_0$ and $\rho^{-1}(0)=Y$.
    
    Fix one of a finite family of smaller relatively compact charts, $(U_{\alpha_1};(y_{\alpha_1},w_{\alpha_1}))$, with coordinates denoted by $(y,w)$.
    Then for every index $\alpha$ with corresponding terms whose supports meet this chart near $Y$, the holomorphic change of defining functions has the form
    \begin{equation}\label{Formula:w alpha change}
        w_\alpha=A_\alpha(y)w+O(|w|^2),
    \end{equation}
    where $A_\alpha(y)$ is invertible along $Y$, and
    $\chi_\alpha(y,w)=\chi_\alpha(y,0)+O(|w|)$.
    Taylor expansion, with $y$ as a parameter, gives
    \begin{equation}\label{Formula:rho-Taylor}
        \rho(y,w)=Q_y(w)+E(y,w),
        \quad \text{where} \quad
        Q_y(w):=\sum_\alpha\chi_\alpha(y,0)|A_\alpha(y)w|^2.
    \end{equation}
    Set $s:=|w|$. 
    On smaller charts, we have
    \begin{equation}\label{Formula:estimate Error term}
        E=O(s^3), \quad \partial E=O(s^2), \quad \text{and} \quad \ddbar E=O(s).
    \end{equation}
    Note that $\sum\chi_\alpha=1$ on $Y$ and $A_\alpha(y)$ is invertible along $Y$.
    One has 
    \begin{equation}\label{Formula:estimate rho}
        cs^2\leq\rho(y,w)\leq Cs^2
    \end{equation}
    for some constants $0<c<C$.
    Let $g$ be a smooth Hermitian metric on $TX|_{U_0}$.
    
    Denote $\ell_{y,w}:=\partial_wQ_y|_w$, regarded as a vector of $(1,0)$-forms on the
    normal coordinate space, and denote $\beta:=(0,\ell_{y,w})$.
    Formulas~(\ref{Formula:rho-Taylor}) and (\ref{Formula:estimate Error term}) imply that
    \begin{equation}\label{Formula:Hessian rho}
        \partial\rho=\beta+O(s^2),
        \qquad
        \ddbar\rho=
        \begin{pmatrix}
            O(s^2)&O(s)\\
            O(s)&\ddbar_wQ_y+O(s)
        \end{pmatrix}.
    \end{equation}
    In particular, $|\partial\rho|_{g^*}^2\le C\rho$, and therefore
    $$P_\rho(u,\bar u)=\frac{|\partial\rho(u)|^2}{\rho}\le C|u|_g^2.$$
    
    We now estimate $A_\rho$.
    By Cauchy--Schwarz inequality, for every normal vector $\zeta$, we have
    \begin{equation}\label{Formula:normal Hessian Q}
        (\ddbar_wQ_y)(\zeta,\bar\zeta)
        -\frac{|\ell_{y,w}(\zeta)|^2}{Q_y(w)}\ge0, \quad w\neq0.
    \end{equation}
    Put $\delta:=\partial\rho-\beta=O(s^2)$.
    Note that $\rho=O(s^2)$, $Q_y=O(s^2)$, and $\rho-Q_y=E=O(s^3)$.
    Therefore, 
    $$\left|\frac1\rho-\frac1{Q_y}\right|
    =\frac{|\rho-Q_y|}{\rho Q_y}=O(s^{-1}).$$
    By a direct computation, 
    \begin{align*}
        \frac{\partial\rho\otimes\bar\partial\rho}{\rho}
        -\frac{\beta\otimes\bar\beta}{Q_y}
        =\frac{\delta\otimes\bar\beta+\beta\otimes\bar\delta
        +\delta\otimes\bar\delta}{\rho}
        +\beta\otimes\bar\beta
        \left(\frac1\rho-\frac1{Q_y}\right).
    \end{align*}
    Since $|\beta|=O(s)$ and $|\delta|=O(s^2)$, every term on the right-hand side is $O(s)$.
    This means that 
    $$P_\rho=\frac{\beta\otimes\bar\beta}{Q_y}+R,$$
    where the error tensor $R$ satisfies $|R(u,\bar{u})|\leq Cs|u|^2_g$.
    By the definition, 
    \begin{equation*}
        A_\rho=\ddbar\rho-P_\rho=
        \begin{pmatrix}
            O(s^2)&O(s)\\
            O(s)&\ddbar_wQ_y+O(s)
        \end{pmatrix}
        -\frac{\beta\otimes\bar\beta}{Q_y}-R.
    \end{equation*}
    Note that $\beta=(0,\ell_{y,w})$. 
    Combining the estimate on $R$ and formula~(\ref{Formula:normal Hessian Q}), we obtain
    $$A_\rho\ge-Cs\,g\ge-C_A\sqrt\rho\,g.$$
    
    Finally, positive definiteness of $Q_y$ implies that
    $|\ell_{y,w}|_{g^*}\geq cs$.
    Thus
    $$\partial_w\rho=\ell_{y,w}+O(s^2)\neq0$$
    for $s>0$ sufficiently small.
    Since $\codim_XY=r\geq 2$, we have $$\dim\ker\partial_w\rho\geq r-1\geq 1.$$
    Here $\partial_w\rho$ is regarded as a linear function on the normal coordinate subspace.
    Then there is a nonzero unit normal vector $\zeta$ such that $\partial_w\rho(\zeta)=0$.
    By taking $u=(0,\zeta)$, we have 
    $$\partial\rho(u)=0.$$
    The normal-normal block
    $\ddbar_wQ_y+O(s)$ in formula (\ref{Formula:Hessian rho}) is uniformly positive definite, so, after one final shrinking, we obtain 
    $$(\ddbar\rho)(u,\bar u)\geq c_0.$$
    
    This completes the proof of Lemma~\ref{Lemma:auxiliary function 1}.
\end{proof}

We now consider the function on the blow-up. 
The pull-back of $\rho$ has an exact quadratic factor on the blow-up.

\begin{lemma}\label{Lemma:Levi form on exceptional divisor}
    Under the assumptions of Lemma~\ref{Lemma:auxiliary function 1}, let $\pi:\widetilde{X}:=\Bl_YX\longrightarrow X$ be the blow-up and set $\widetilde{\rho}:=\pi^*\rho$ on $\pi^{-1}(U_0)$.
    In a blow-up chart
    \[
    w=t\,v(\xi),
    \qquad v(\xi)=(1,\xi_2,\ldots,\xi_r),
    \]
    there is a smooth function $\Gamma$ on the chart, positive along
    $E=\{t=0\}$, such that
    \begin{equation}\label{Formula:factorization of wtrho}
        \widetilde\rho=|t|^2\Gamma(y,t,\xi),
        \quad \text{and} \quad
        \Gamma(y,0,\xi)=Q_y(v(\xi))>0.
    \end{equation}
    Consequently,
    \begin{equation}\label{Formula:Levi form on E}
        \ddbar\widetilde\rho|_{(y,0,\xi)}
        =Q_y(v(\xi))\,dt\otimes d\bar t
    \end{equation}
\end{lemma}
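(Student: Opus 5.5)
The plan is to work directly from the construction of $\rho$ in Lemma~\ref{Lemma:auxiliary function 1}: $\rho=\sum_\alpha\chi_\alpha|w_\alpha|^2$, with local expansions (\ref{Formula:w alpha change}) and (\ref{Formula:rho-Taylor}). First I will pass to the blow-up chart, where $w=t\,v(\xi)$. By (\ref{Formula:w alpha change}), each $w_\alpha$ is a holomorphic function of $(y,w)$ vanishing on $\{w=0\}$. Hadamard's lemma (holomorphic version, in the $w$-variables with $y$ as parameter) gives $w_\alpha=\sum_j w_j\,G_{\alpha j}(y,w)$ with $G_{\alpha j}$ holomorphic. Substituting $w=t\,v(\xi)$ yields
\[
\pi^*w_\alpha=t\,W_\alpha(y,t,\xi),\qquad W_\alpha(y,t,\xi):=\sum_j v_j(\xi)\,G_{\alpha j}(y,t\,v(\xi)),
\]
with $W_\alpha$ holomorphic and $W_\alpha(y,0,\xi)=A_\alpha(y)v(\xi)$.

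Next I set $\Gamma:=\sum_\alpha(\pi^*\chi_\alpha)\,|W_\alpha|^2$. This is smooth because $\pi^*\chi_\alpha$ is smooth on the blow-up, and it gives $\widetilde\rho=|t|^2\Gamma$. At $t=0$ we have $\pi^*\chi_\alpha(y,0,\xi)=\chi_\alpha(y,0)$, so $\Gamma(y,0,\xi)=\sum_\alpha\chi_\alpha(y,0)|A_\alpha(y)v(\xi)|^2=Q_y(v(\xi))$. This is positive: $\sum_\alpha\chi_\alpha=1$ on $Y$, the $\chi_\alpha\ge0$, and each $A_\alpha(y)$ is invertible while $v(\xi)\neq0$. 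So at least one term with $\chi_\alpha(y,0)>0$ contributes a strictly positive amount. The only mild care needed is that the chart indices $\alpha$ whose cutoffs meet the fixed chart are finitely many and all admit the expansion (\ref{Formula:w alpha change}) there, which is already arranged in the proof of Lemma~\ref{Lemma:auxiliary function 1}.

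For (\ref{Formula:Levi form on E}) I will compute $\partial\bar\partial(t\bar t\,\Gamma)$ by the Leibniz rule:
\[
\partial\bar\partial(|t|^2\Gamma)=\Gamma\,dt\wedge d\bar t+\bar t\,dt\wedge\bar\partial\Gamma+t\,\partial\Gamma\wedge d\bar t+|t|^2\partial\bar\partial\Gamma .
\]
All terms except the first carry a factor $t$ or $\bar t$, so they vanish at $t=0$, since $\Gamma$ is smooth with bounded derivatives. Hence at $(y,0,\xi)$, $\ddbar\widetilde\rho=\Gamma(y,0,\xi)\,\image\,dt\wedge d\bar t$. Read as a Hermitian form, this is $Q_y(v(\xi))\,dt\otimes d\bar t$.

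The main, though small, obstacle is justifying smoothness of $\Gamma$ and the exact divisibility by $|t|^2$. The factorization must be exact rather than only asymptotic, which is why the holomorphic Hadamard decomposition is used instead of the Taylor estimate $E=O(s^3)$ alone.
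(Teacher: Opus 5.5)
Your proposal is correct and follows essentially the same route as the paper: an exact factorization $\pi^*w_\alpha=t\,W_\alpha$, the function $\Gamma=\sum_\alpha\pi^*\chi_\alpha\,|W_\alpha|^2$, evaluation at $t=0$ via $W_\alpha(y,0,\xi)=A_\alpha(y)v(\xi)$, and the Leibniz computation of the Levi form on $E$. The only difference is how divisibility by $t$ is obtained: the paper deduces it from $\mathcal{I}_Y\cdot\mathcal{O}_{\widetilde{X}}=\mathcal{O}_{\widetilde{X}}(-E)$ being generated by $t$ in the chart, while you prove it directly with the holomorphic Hadamard lemma (your local decompositions patch uniquely, since $W_\alpha=\pi^*w_\alpha/t$ off $E$).
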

\begin{proof}
    The components $w_\alpha^1,\ldots,w_\alpha^r$ generate the ideal of $Y$ on $U_\alpha$.  
    In particular, every $w_\alpha^j$ is a local section of the ideal sheaf $\mathcal{I}_Y$.  
    In the chosen blow-up chart, the ideal $\mathcal{I}_Y\cdot\mathcal{O}_{\widetilde{X}}=\mathcal{O}_{\widetilde{X}}(-E)$ is generated by $t$. 
    Hence, on $\pi^{-1}(U_\alpha)$, one may write 
    $$\pi^*w_\alpha^j=t\,a_{\alpha j}(y,t,\xi).$$
    Then we have 
    $$\widetilde{\rho}=|t|^2\Gamma \quad \text{where} \quad \Gamma=\sum_\alpha\pi^*\chi_\alpha\sum_j|a_{\alpha j}|^2.$$
    By formula~(\ref{Formula:w alpha change}), we have
    $a_{\alpha}(y,0,\xi)=A_\alpha(y)v(\xi)$.
    Hence, $\Gamma(y,0,\xi)=Q_y(v(\xi))>0$.
    Differentiating $\widetilde{\rho}$ at $t=0$, we obtain
    $$\ddbar\widetilde\rho|_{(y,0,\xi)}
    =Q_y(v(\xi))\,dt\otimes d\bar t.$$
    
    This completes the proof of Lemma~\ref{Lemma:Levi form on exceptional divisor}.
\end{proof}

\begin{remark}
    The function $\rho$ constructed in Lemma~\ref{Lemma:auxiliary function 1} can be regarded as a generalization of the standard squared normal function $|w|^2$ when $Y$ is a single point.
    The conclusions of Lemmas~\ref{Lemma:auxiliary function 1} and \ref{Lemma:Levi form on exceptional divisor} are essentially estimates of Levi forms near $Y$ and on the exceptional divisor.
\end{remark}

We need the following constructions of auxiliary functions.
\begin{lemma}\label{Lemma:auxiliary function 2}
    Given constants $M,\eta\in (0,\infty)$, there exist constants $S>2$ and $L>0$ depending only on $M,\eta$ satisfying the following property.
    For any $a>0$ and any $A>M+1$, there is a smooth function $\lambda$ on $[0,\infty)$ such that
    \begin{enumerate}
        \item $\lambda\geq0$;
        \item $\lambda(\tau)=A\tau$ for $\tau\in[0,a]$;
        \item $\frac{\lambda(\tau)}{\tau}>M$ for $\tau\in(a,2a)$;
        \item $0\leq\frac{\lambda(\tau)}{\tau}\leq L$ and $\lambda'(\tau)\geq-\eta$ for $\tau\in[2a,Sa]$;
        \item $\lambda=0$ on a neighborhood of $[Sa,\infty)$.
    \end{enumerate}
\end{lemma}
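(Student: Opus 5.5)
The plan is to build $\lambda$ by rescaling a single model profile, so that $S$ and $L$ depend only on $(M,\eta)$. Write $\lambda(\tau)=a\,\Lambda(\tau/a)$. Condition (2) becomes $\Lambda(s)=As$ on $[0,1]$, and conditions (3)--(5) for $\Lambda$ on $[1,2]$, $[2,S]$ and near $[S,\infty)$ are scale invariant. The ratio $\lambda(\tau)/\tau=\Lambda(s)/s$ and the derivative $\lambda'(\tau)=\Lambda'(s)$ are unchanged by the scaling. It therefore suffices to treat $a=1$, with constants uniform in $A>M+1$.

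Since $A$ is arbitrary while $L$ must not depend on $A$, the profile should drop quickly on $(1,2)$ to a fixed level and then decrease slowly. Set $c:=2(M+1)$. On $[1,2]$, take $\Lambda(s)=s\,\theta(s)$, where $\theta$ is a smooth function with the following properties:
\begin{enumerate}
\item $\theta=A$ near $1$ and $\theta=M+1$ near $2$;
\item $\theta$ is monotone and $\theta\ge M+1>M$ throughout;
\item all derivatives of $\theta$ match the constant $A$ to the left of $1$.
\end{enumerate}
One can take $\theta=A+(M+1-A)\beta(s)$ with $\beta$ a fixed smooth step from $0$ to $1$, flat near $1$ and near $2$. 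This gives (3), and near $s=2$ we have $\Lambda(s)=(M+1)s$, so $\Lambda(2)=c$ and $\Lambda'=M+1$.

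Next I need a smooth, nonincreasing descent on $[2,S]$ from value $c$ to $0$ with slope at least $-\eta$. One more subtlety: near $2$ the slope is $+(M+1)$, so I first let $\Lambda'$ bend down to $0$ within $[2,3]$, keeping $\Lambda\le c+M+1$ there. Concretely, on a neighbourhood of $[2,3]$ set $\Lambda'=(M+1)(1-\beta(s-1))$, with $\beta$ the same step function shifted to $[2,3]$. This makes $\Lambda$ increase to some value $c'\le 2c$ with $\Lambda'=0$ near $3$.

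Then I descend linearly with slope $-\eta/2$ from $c'$, smoothing the two corners by mollification at scale $1$. The slope stays in $[-\eta,0]$ because mollification preserves bounds on the derivative. Mollifying a piecewise-linear function $\ell$ with a symmetric kernel keeps $\ell\ge0$ and makes it vanish identically beyond its support. The descent takes length $2c'/\eta$, so I take
\[
S:=4+\frac{8(M+1)}{\eta}+2,
\]
which depends only on $(M,\eta)$. Extending $\Lambda$ by $0$ gives (1) and (5).

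For (4), on $[2,S]$ we have $\Lambda\ge0$, so $\Lambda(s)/s\le c'/2\le 2(M+1)=:L$. The slope bound $\Lambda'\ge-\eta$ holds because $\Lambda'\ge0$ on $[2,3]$ and $\Lambda'\in[-\eta,0]$ afterwards. Finally, rescaling back gives $\lambda(\tau)=a\Lambda(\tau/a)$.

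The only delicate point is keeping the gluing smooth at $s=1$ and $s=2$ uniformly in $A$. The flatness of $\beta$ at its endpoints handles this: $\Lambda$ coincides with the linear pieces on neighbourhoods of the junctions. The large derivative of order $A$ on $(1,2)$ is harmless, since (3) constrains only the ratio $\Lambda(s)/s$ there.
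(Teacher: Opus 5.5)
Your proposal is correct and is essentially the paper's construction: rescale a single model profile, take $s\theta(s)$ with $\theta$ interpolating from $A$ to $M+1$ on $[1,2]$ (so $\Lambda(s)/s\geq M+1$ there), bend the slope down to $0$ by integrating a cutoff, and then descend to $0$ with slope $\geq-\eta$ over a length of order $(M+1)/\eta$, which gives $L=2(M+1)$ independent of $A$. The differences are cosmetic. The paper keeps $(M+1)\tau$ on $[2,3]$ and bends on $[3,4]$, and it realizes the descent as $C\psi((\tau-4)/T)$ with $T\geq C\|\psi'\|_\infty/\eta$ rather than by mollifying a linear ramp. In your version the ramp must start at $s_0\geq 4$, so that the mollified function is identically $c'$ near $3$; your choice of $S$ already assumes this.
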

\begin{proof}
    Choose $\delta_0\in(0,1/4)$ and $\theta_1\in C^\infty([0,\infty),[0,1])$ such that
    $$\theta_1\equiv 1 \text{ on } [0,1+\delta_0] \quad \text{and} \quad \theta_1\equiv 0 \text{ on } [2-\delta_0,\infty).$$
    
    For $\tau\in[0,2]$, put 
    $$\widehat{\lambda}_A(\tau):=\tau[(M+1)+(A-M-1)\theta_1(\tau)].$$
    Then $\widehat{\lambda}_A(\tau)=A\tau \text{ on } [0,1+\delta_0]$, $\widehat{\lambda}_A(\tau)=(M+1)\tau \text{ on } [2-\delta_0,2]$, and
    $$\frac{\widehat{\lambda}_A(\tau)}{\tau}\geq M+1>M \text{ for } \tau\in(1,2).$$
    Continue with $\widehat{\lambda}_A(\tau)=(M+1)\tau$ on $[2,3]$.
    Choose $\theta_2\in C^\infty([3,4],[0,M+1])$ such that
    $$\theta_2\equiv M+1 \text{ on } [3,3+\delta_0] \quad \text{and} \quad \theta_2\equiv 0 \text{ on } [4-\delta_0,4].$$
    Define 
    $$\widehat{\lambda}_A(\tau)=3(M+1)+\int_{3}^{\tau}\theta_2(t)dt \quad \text{for } 3\leq\tau\leq 4.$$
    Set $C=\widehat{\lambda}_A(4)\in[3(M+1),4(M+1)]$.
    It is easy to see that 
    $$\widehat{\lambda}_A(\tau)=C \quad \text{for } 4-\delta_0\leq\tau\leq 4.$$
    
    Choose a smooth nonincreasing function $\psi:\RR\rightarrow [0,1]$ such that
    $$\psi\equiv 1 \text{ on } (-\infty,\delta_0] \quad \text{and} \quad \psi\equiv 0 \text{ on } [1-\delta_0,\infty).$$
    Take 
    $$T\geq \frac{C\|\psi'\|_\infty}{\eta}$$
    and set 
    $$\widehat{\lambda}_A(\tau)=C\psi(\frac{\tau-4}{T}) \quad\text{for } \tau\geq4.$$
    Then we obtain a smooth function $\widehat{\lambda}_A(\tau)$ on $[0,\infty)$.
    
    Put $S:=4+T$ and $L:=2(M+1)$.
    For $\tau\geq 2$, we have 
    $$\widehat{\lambda}'_A(\tau)\geq-\eta \quad \text{and} \quad 0\leq \frac{\widehat{\lambda}_A(\tau)}{\tau} \leq L.$$
    Finally define
    $$\lambda(\tau)=a\widehat{\lambda}_A(\tau/a).$$
    Then $\lambda'(\tau)=\widehat{\lambda}'_A(\tau/a)$, and all assertions follow.
\end{proof}

We now prove Theorem~\ref{Main Thm 1}.
\begin{theorem}[=Theorem~\ref{Main Thm 1}]\label{Main Thm 1'}
    Let $X$ be a compact complex manifold and let $Y\subset X$ be a connected closed complex submanifold. 
    Let $\pi:\widetilde{X}:=\Bl_YX\longrightarrow X$ be the blow-up of $X$ with center $Y$.
    If $TX$ admits a smooth uniformly RC-positive Hermitian metric, then
    $T\widetilde{X}$ is also uniformly RC-positive.
\end{theorem}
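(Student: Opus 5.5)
Our plan is to build a metric on $T\widetilde{X}$ by gluing, then correct it by a conformal factor $e^{-F}$ concentrated near the exceptional divisor $E=\pi^{-1}(Y)$. The conformal change formula $R^H=e^{-F}(R^h+\ddbar F\otimes h)$ is the main tool, and the auxiliary functions are those of Lemmas~\ref{Lemma:auxiliary function 1}, \ref{Lemma:Levi form on exceptional divisor} and \ref{Lemma:auxiliary function 2}.

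If $r=\codim Y=1$ the blow-up is an isomorphism, so we may assume $r\geq 2$. Let $h$ be the uniformly RC-positive metric on $TX$. By Proposition~\ref{Proposition:Constants in RC-positive} and compactness, there is $C_1>0$ such that at each $q\in X$ some $g$-unit $u$ has $R^h(u,\bar u,v,\bar v)\geq C_1|v|_h^2$ for all $v$. There is also $C_2$ with $R^h(u,\bar u,v,\bar v)\geq -C_2|u|_g^2|v|_h^2$.

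Away from $E$, $d\pi$ is an isomorphism, so $\pi^*h$ is a metric on $T\widetilde X|_{\widetilde X\setminus E}$ and is uniformly RC-positive there. Fix any smooth metric $\widetilde h_0$ on $T\widetilde X$. On a tube $\{\widetilde\rho<\delta\}$, with $\widetilde\rho=\pi^*\rho$ from Lemma~\ref{Lemma:auxiliary function 1}, we glue $\widetilde h=(1-\chi(\widetilde\rho/\delta))\pi^*h+\chi(\widetilde\rho/\delta)\widetilde h_0$, where $\chi$ is a cutoff equal to $1$ near $0$ and supported in $[0,1)$. The curvature of $\widetilde h$ is bounded below on the compact tube by some $-C_3|\,\cdot\,|^2$, with respect to a fixed metric on $\widetilde X$. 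It equals $\pi^*R^h$ on $\{\widetilde\rho\geq\delta\}$.

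We then set $H=e^{-F}\widetilde h$ with $F=\lambda(\widetilde\rho)$, where $\lambda$ is given by Lemma~\ref{Lemma:auxiliary function 2} with $a\approx\delta$ and $A$ huge. We have
$$\ddbar F=\lambda'(\widetilde\rho)\,\ddbar\widetilde\rho+\lambda''(\widetilde\rho)\,\partial\widetilde\rho\otimes\bar\partial\widetilde\rho,$$
and the region splits into three zones.

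\textbf{Zone near $E$} ($\widetilde\rho\le a$). Here $\lambda=A\tau$, so $\ddbar F=A\,\ddbar\widetilde\rho$. On $E$, Lemma~\ref{Lemma:Levi form on exceptional divisor} gives $\ddbar\widetilde\rho=Q_y(v(\xi))\,dt\otimes d\bar t$ with $Q_y>0$. Taking $u=\partial_t$ yields curvature at least $AQ-C_3>0$ for $A$ large, and by continuity this persists on a tube $|t|<t_0$. On the remaining part of $\{\widetilde\rho\le a\}$ we work on $\widetilde X\setminus E\cong X\setminus Y$. There we take the vector $u$ from Lemma~\ref{Lemma:auxiliary function 1}(4), which satisfies $\partial\rho(u)=0$ and $\ddbar\rho(u,\bar u)\geq c_0$; the term $A c_0$ then dominates $C_3$ times the $\widetilde g$-norm of $u$.

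\textbf{Transition zones} ($a<\widetilde\rho<Sa$). Here the metric is $\pi^*h$ as long as $\delta<a$, and $\lambda$ has slope and derivative bounds. We use the RC direction $u$ of $h$ and split $\ddbar\rho=A_\rho+P_\rho$. The $P_\rho$ part is controlled by $\lambda'P_\rho\geq-\eta C_P$, and the $\lambda''P_\rho$ term is handled by the same decomposition. The $A_\rho$ part costs at most $|\lambda'|C_A\sqrt\rho$, which is small when $a$ is small. Choosing $\eta\ll C_1/C_P$ keeps the total error below $C_1/2$.

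\textbf{The main obstacle.} The hardest step is the transition zones, where $\lambda'$ and $\lambda''$ can be large with either sign. Controlling $\lambda''|\partial\rho(u)|^2$ requires rescaling so that $\lambda''\rho$ is bounded; this is why the lemma is phrased in terms of $\lambda/\tau$. Where $\lambda'$ is large positive (on $(a,2a)$), we choose $u$ instead from Lemma~\ref{Lemma:auxiliary function 1}(4), using $\lambda'\ge$ something comparable to $\lambda/\tau>M$ with $Mc_0>C_2$. We expect to need a careful matching of the order of choices: first $M$ from $C_2/c_0$, then $\eta$ from $C_1/C_P$, then $S,L$, then $a$ small, then $\delta<a$, then $A$ large.
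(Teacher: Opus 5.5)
There is a genuine gap, and it sits exactly at the step you flagged as the main obstacle. Taking $F=\lambda(\widetilde\rho)$ is the wrong way to use Lemma~\ref{Lemma:auxiliary function 2}, and with this choice the transition zone cannot be controlled.

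\textbf{Why your $F$ fails.}
\begin{itemize}
\item Your $F$ rises to $Aa$ at $\widetilde\rho=a$ and must return to $0$ by $\widetilde\rho=Sa$. Since $S$ is fixed before $A$, this forces $\lambda'\le -A/(S-1)$ somewhere.
\item In particular, the claim that $\lambda'$ is comparable to $\lambda/\tau>M$ on $(a,2a)$ is false. In the construction of Lemma~\ref{Lemma:auxiliary function 2}, $\lambda$ falls from about $Aa$ to about $2(M+1)a$ inside $(a,2a)$, so $\lambda'$ is of order $-A$ there. Property (3) only bounds $\lambda/\tau$.
\item The lemma gives no control on $\lambda''$. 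On $[3a,4a]$ the quantity $\tau\lambda''$ is negative of size comparable to $M$, possibly much larger than $C_1$. So nothing handles the term $\lambda''|\partial\rho(u)|^2$.
\end{itemize}

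This is a structural failure, not bookkeeping. Take a point blow-up with $\rho=|w|^2$. The form $\ddbar\lambda(\rho)$ has eigenvalue $\lambda'(\rho)$ on directions tangent to the spheres, and $(\tau\lambda')'|_{\tau=\rho}$ in the radial direction. Now $\tau\lambda'+C\tau>0$ at $\tau=a$ and is negative wherever $\lambda'<-C$. The mean value theorem, applied between its last zero and such a point, gives a radius where both eigenvalues are $<-C$. Choose $C>\sup\{R^h(u,\bar u,v,\bar v): |u|_g=|v|_h=1\}$. Then on that sphere no direction $u$ works, and $e^{-F}\widetilde h$ is not uniformly RC-positive.

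\textbf{The missing idea.} $\lambda$ should play the role of $\tau f'(\tau)$, not of $f$. The paper sets $F=f(\widetilde\rho)$ with $f(\tau)=\int_0^\tau\lambda(s)s^{-1}\,ds$. Then:
\begin{itemize}
\item $F=A\widetilde\rho$ on $\{\widetilde\rho\le a\}$.
\item $F$ is nondecreasing and only levels off at the constant $f(Sa)$, which is harmless for a conformal factor; it never comes back down.
\item Its Levi form contains no $\lambda''$:
\[
\ddbar F=\frac{\lambda(\widetilde\rho)}{\widetilde\rho}\,A_{\widetilde\rho}+\lambda'(\widetilde\rho)\,P_{\widetilde\rho}.
\]
\end{itemize}

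With this $F$ the two transition zones close as follows.
\begin{itemize}
\item On $(a,2a)$, take the $u$ of Lemma~\ref{Lemma:auxiliary function 1}(4). Since $\partial\rho(u)=0$, you get $\ddbar F(u,\bar u)=\frac{\lambda}{\rho}\ddbar\rho(u,\bar u)\ge Mc_0>C_2$, whatever the sign of $\lambda'$.
\item On $[2a,Sa]$, property (4) together with Lemma~\ref{Lemma:auxiliary function 1}(2),(3) gives $\ddbar F\ge-(LC_A\sqrt{Sa}+\eta C_P)g\ge-\frac{C_1}{2}g$. The RC direction of $h$ then finishes.
\end{itemize}

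After this replacement, the rest of your plan goes through unchanged. This includes your gluing and your treatment of $\{\widetilde\rho\le a\}$ (via $\partial_t$ near $E$ and Lemma~\ref{Lemma:auxiliary function 1}(4) away from it, equivalent to the paper's constant $c_a$). It also includes your order of choosing the constants.
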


\begin{proof}
    Let $h$ be a uniformly RC-positive Hermitian metric on $TX$. 
    We may assume that $\codim_XY\geq2$.
    
    \medskip
    \noindent\textbf{Step 1: basic setting and some constants.}
    
    Here we use the notation in Lemma~\ref{Lemma:auxiliary function 1} and Lemma~\ref{Lemma:auxiliary function 2}. 
    Choose a smaller neighborhood $U\Subset U_0$ of $Y$, and define $b:=\min_{\partial U}\rho>0$.
    Choose $0<\varepsilon_0<\min\{\varepsilon,b\}$, then
    $$\{x\in U~|~ \rho(x)\leq\varepsilon_0\}\Subset U.$$
    By applying Proposition~\ref{Proposition:Constants in RC-positive}, we know that
    $$C_1:=\inf_{q\in \overline{U}}\sup_{|u|_g=1}\inf_{|v|_h=1}R^h(u,\bar{u},v,\bar{v})>0,$$
    and there is a constant $0<C_2<\infty$ such that for any point $x\in \overline{U}$, every $g$-unit vector $u\in T_xX$ and every $v\in T_xX$,
    $$R^h(u,\bar{u},v,\bar{v})\geq-C_2|v|^2_h.$$
    
    Choose 
    $$M>\frac{C_2+1}{c_0} \quad \text{and} \quad 0<\eta<\frac{C_1}{4C_P}$$
    in Lemma~\ref{Lemma:auxiliary function 2}.
    Then we obtain two constants $S$ and $L$ depending only on $M$ and $\eta$.
    Choose $a>0$ in Lemma~\ref{Lemma:auxiliary function 2} sufficiently small that 
    $$Sa<\varepsilon_0 \quad \text{and} \quad C_AL\sqrt{Sa}<\frac{C_1}{4}.$$
    
    \medskip
    \noindent\textbf{Step 2: a smooth metric on the blow-up.}
    
    Set $\widetilde{\rho}=\pi^*\rho$ on $\pi^{-1}(U)$.
    Define a globally smooth semipositive tensor on $T\widetilde{X}$ by
    \begin{equation*}
        \widehat{h}(u,\bar{v}):= h(d\pi(u),\overline{d\pi(v)}).
    \end{equation*}
    Note that the tensor $\widehat{h}$ is positive definite on $\widetilde{X}\setminus E$.
    Let $k$ be a smooth Hermitian metric on $T\widetilde{X}$.
    Take a cut-off function 
    $\chi\in C^\infty([0,\infty),[0,1])$ satisfying
    $$\chi(s)=1\quad(s\le a/3) \quad \text{and} \quad \chi(s)=0\quad(s\ge2a/3).$$
    Then the function $\chi(\widetilde\rho)$ on $\pi^{-1}(U)$ vanishes on a neighborhood of its boundary, 
    since $\{x\in U~|~ \rho(x)\leq\varepsilon_0\}\Subset U$ and $Sa<\varepsilon_0$.
    Extend it by zero to a global smooth function
    $\widetilde\chi$ on $\widetilde{X}$, and set
    $$h_0:=\widetilde\chi k+(1-\widetilde\chi)\widehat{h}.$$
    Then $h_0$ is a smooth metric on $T\widetilde{X}$ and 
    $$h_0=\widehat{h} \quad \text{on} \quad\widetilde{X}\setminus\{\widetilde x\in\pi^{-1}(U)~|~ \widetilde\rho(\widetilde x)<2a/3\}$$
    Consider the compact subset
    $$K_a:=\pi^{-1}\bigl(\{x\in U~|~\rho(x)\le a\}\bigr).$$
    At points of $E$, Lemma~\ref{Lemma:Levi form on exceptional divisor} gives a positive normal eigenvalue of $\ddbar\widetilde\rho$.  
    At points of $K_a\setminus E$,
    Lemma~\ref{Lemma:auxiliary function 1}(4) gives a positive eigenvalue since $a<\varepsilon_0<\varepsilon$.  
    Thus the continuous function
    $$\widetilde x\longmapsto\sup_{|u|_{h_0}=1} (\ddbar\widetilde\rho)_{\widetilde x}(u,\bar u)$$
    is positive on $K_a$.
    Therefore
    $$c_a:=\inf_{\widetilde x\in K_a}\sup_{|u|_{h_0}=1}(\ddbar\widetilde\rho)_{\widetilde x}(u,\bar{u})>0.
    $$
    Compactness of $K_a$ also gives a constant $C_a<\infty$ such that
    $$R^{h_0}(u,\bar u,v,\bar{v})\geq-C_a|u|_{h_0}^2|v|_{h_0}^2 \quad \text{on }K_a.$$
    
    Choose a constant
    $$A>\max\{M+1,\frac{C_a+1}{c_a}\}.$$
    Then we can obtain a smooth function $\lambda$ by Lemma~\ref{Lemma:auxiliary function 2}.
    Define a smooth function
    $$f(\tau):=\int_{0}^{\tau}\frac{\lambda(s)}{s}ds.$$
    Note that $\lambda=0$ on a neighborhood of $[Sa,\infty)$. 
    Thus $f$ is constant there.
    This implies that 
    $f(\widetilde{\rho})=f(Sa)$ is constant on a neighborhood of the boundary of $\pi^{-1}(U)$ since $Sa<\varepsilon_0$.
    Then we define
    \begin{equation*}
        F:=
        \begin{cases}
            f(\widetilde\rho),&\text{on }\pi^{-1}(U),\\
            f(Sa),&\text{on }\widetilde{X}\setminus\pi^{-1}(U).
        \end{cases}
    \end{equation*}
    The argument above shows that $F$ is a globally defined smooth function on $\widetilde{X}$.
    Define a new Hermitian metric on $T\widetilde{X}$ by 
    $$H=e^{-F}h_0.$$
    We now show that $H$ is uniformly RC-positive.

    \medskip
    \noindent\textbf{Step 3: the curvature computation.}
    
    According to the formula of conformal change, 
    $$R^H=e^{-F}(R^{h_0}+\ddbar F\otimes h_0).$$
    
    \medskip
    \noindent\textbf{Region I: $\{\widetilde{\rho}\leq a\}\subset \pi^{-1}(U)$.}
    
    According to Lemma~\ref{Lemma:auxiliary function 2}(2), $\lambda(\widetilde{\rho})=A\widetilde{\rho}$ here.
    Thus 
    $$F=f(\widetilde{\rho})=A\widetilde{\rho} \quad \text{and} \quad \ddbar F=A\ddbar \widetilde{\rho}.$$
    At each point $\widetilde{x}$, choose an $h_0$-unit vector $\widetilde{u}$ satisfying
    $(\ddbar\widetilde\rho)(\widetilde{u},\overline{\widetilde{u}})\ge c_a$.  
    Then for every $0\neq \widetilde{v}\in T_{\widetilde{x}}\widetilde{X}$,
    \begin{align*}
        R^H(\widetilde{u},\overline{\widetilde{u}},\widetilde{v},\overline{\widetilde{v}}) \geq e^{-F}(-C_a+Ac_a)|\widetilde{v}|_{h_0}^2>0.
    \end{align*}
    This means that $H$ is uniformly RC-positive in $\{\widetilde{\rho}\leq a\}$.
    
    \medskip
    \noindent\textbf{Region II: $\{a< \widetilde{\rho}< 2a\}\subset \pi^{-1}(U)$.}
    
    We first compute $\ddbar F$. 
    By the definition, we have 
    $$f'(\widetilde{\rho})=\frac{\lambda(\widetilde{\rho})}{\widetilde{\rho}},
    \quad
    \partial F= f'(\widetilde{\rho})\partial\widetilde{\rho}, 
    \quad
    \text{and} \quad
    \bar\partial F=f'(\widetilde{\rho})\bar\partial\widetilde{\rho}.$$
    Then we obtain
    $$\ddbar F=f'(\widetilde{\rho})\ddbar\widetilde{\rho}+f''(\widetilde{\rho})\partial\widetilde{\rho}\otimes\bar\partial\widetilde{\rho}.$$
    By a direct computation, 
    $$f''(\widetilde{\rho})=\frac{\lambda'(\widetilde{\rho})}{\widetilde{\rho}}-\frac{\lambda(\widetilde{\rho})}{\widetilde{\rho}^2}.$$
    Therefore, 
    $$\ddbar F=\frac{\lambda(\widetilde{\rho})}{\widetilde{\rho}}(\ddbar\widetilde{\rho}
    -\frac{\partial\widetilde{\rho}\otimes\bar\partial\widetilde{\rho}}{\widetilde{\rho}})
    +\frac{\lambda'(\widetilde{\rho})}{\widetilde{\rho}}\partial\widetilde{\rho}\otimes\bar\partial\widetilde{\rho}.$$
    
    Let $\widetilde{x}$ be a point in $\{a< \widetilde{\rho}< 2a\}$ and $x=\pi(\widetilde{x})$.
    Note that $d\pi:T_{\widetilde x}\widetilde{X}\to T_xX$ is an isomorphism and $h_0=\widehat{h}$.
    By Lemma~\ref{Lemma:auxiliary function 1}(4), there is a $g$-unit tangent vector $u\in T_xX$ such that 
    $$\partial\rho(u)=0 \quad \text{and} \quad (\ddbar\rho)(u,\bar u)\geq c_0.$$
    According to Lemma~\ref{Lemma:auxiliary function 2}(3), $\lambda(\widetilde{\rho})/\widetilde{\rho}>M$ here.
    Thus 
    $$\ddbar F(\widetilde{u},\overline{\widetilde{u}})
    =\frac{\lambda(\widetilde{\rho})}{\widetilde{\rho}}(\ddbar\rho)(u,\bar{u})\geq Mc_0,$$
    where $\widetilde{u}=d\pi^{-1}(u)$.
    By the construction, for any $0\neq\widetilde{v}\in T_{\widetilde{x}}\widetilde{X}$ and $v=d\pi(\widetilde{v})$,
    $$ R^{h_0}(\widetilde{u},\overline{\widetilde{u}},\widetilde{v},\overline{\widetilde{v}})
    =R^h(u,\bar{u},v,\bar{v}) 
    \quad \text{and} \quad
    |\widetilde{v}|_{h_0}=|v|_h.$$
    Therefore, 
    $$R^{h_0}(\widetilde{u},\overline{\widetilde{u}},\widetilde{v},\overline{\widetilde{v}})
    +\ddbar F(\widetilde{u},\overline{\widetilde{u}})|\widetilde{v}|^2_{h_0}\geq(-C_2+Mc_0)|v|^2_h>0.$$
    This means that $H$ is uniformly RC-positive in $\{a<\widetilde{\rho}< 2a\}$.

    \medskip
    \noindent\textbf{Region III: $\{2a\leq\widetilde{\rho}\leq Sa\}\subset \pi^{-1}(U)$.}
    
    Again identify the tangent spaces by $d\pi$.
    According to the estimates of $P_\rho$ and $A_\rho$ in Lemma~\ref{Lemma:auxiliary function 1}, the construction of $M,\eta,a$, Lemma~\ref{Lemma:auxiliary function 2}(4),  and computation above,
    $$\ddbar F\geq -(C_AL\sqrt{\widetilde\rho}+C_P\eta)g\geq
    -\frac{C_1}{2}g.$$
    At each point $\widetilde{x}$, choose a $g$-unit tangent vector $u\in T_{\pi(\widetilde{x})}X$ such that 
    $$\inf_{|v|_h=1}R^h(u,\bar{u},v,\bar{v})\geq C_1>0.$$
    Then for $\widetilde{u}=d\pi^{-1}(u)$ and any $0\neq\widetilde{v}\in T_{\widetilde{x}}\widetilde{X}$, we have 
    $$R^{h_0}(\widetilde{u},\overline{\widetilde{u}},\widetilde{v},\overline{\widetilde{v}})
    +\ddbar F(\widetilde{u},\overline{\widetilde{u}})|\widetilde{v}|^2_{h_0}\geq\frac{C_1}{2}|v|^2_h>0,$$
    where $v=d\pi(\widetilde{v})$.
    This means that $H$ is uniformly RC-positive in $\{2a\leq\widetilde{\rho}\leq Sa\}$.

    \medskip
    \noindent\textbf{Region IV: $\widetilde{X}\setminus\{\widetilde{x}\in\pi^{-1}(U)~|~\widetilde{\rho}(\widetilde{x})\leq Sa\}$.}
    
    Note that $F$ is constant and $h_0=\widehat{h}$ and $\pi$ is biholomorphic here.
    Hence $H$ is a positive constant multiple of the original metric $h$, which is uniformly RC-positive.
    This means that $H$ is uniformly RC-positive in $\widetilde{X}\setminus\{\widetilde{x}\in\pi^{-1}(U)~|~\widetilde{\rho}(\widetilde{x})\leq Sa\}$.
    
    In summary, we construct a uniformly RC-positive smooth Hermitian metric $H$ on $T\widetilde{X}$.
    This completes the proof of Theorem~\ref{Main Thm 1}.
    
\end{proof}

\section{Hopf surfaces and surfaces of class VII} \label{Sec:Classification 1}

By \cite[formula (6.4)]{LY17}, tangent bundles of the standard $n$-dimensional Hopf manifolds $\mathbb{S}^1\times\mathbb{S}^{2n-1}$ are uniformly RC-positive.
In \cite[Proposition 5.8]{Yan17}, Yang showed that on every Hopf surface $H_{a,b}$ of class 1, there exists a Gauduchon metric with semi-positive holomorphic bisectional curvature.
 
Following the computation there, we prove that the tangent bundle of every Hopf surface in the generalized sense is uniformly RC-positive in this section.
Then we give the classification of compact complex surfaces whose tangent bundle is uniformly RC-positive, except minimal class VII surfaces with $b_2>0$.

A compact complex surface $X$ is called a Hopf surface if its universal covering is analytically isomorphic to $\CC^2\setminus\{0\}$.
It has been proved by Kodaira (see \cite{BPV84,GO98}) that its fundamental group $\pi_1(X)$ is a finite extension of an infinite cyclic group generated by a biholomorphic contraction whose inverse takes the form
$$(z,w)\longmapsto (az+\lambda w^m,bw)$$
where $a,b,\lambda\in\CC$, $|a|\geq|b|>1$, $m\in \NN^*$ and $\lambda(a-b^m)=0$.
Hence, there are two different cases of primary Hopf surfaces:
\begin{enumerate}
    \item The Hopf surface $H_{a,b}$ of class 1 if $\lambda=0$;
    \item The Hopf surface $H_{a,b,\lambda,m}$ of class 0 if $\lambda\neq 0$ and $a=b^m$.
\end{enumerate}
Any Hopf surface admits a finite, unramified covering which is a primary Hopf surface.

\subsection{Primary Hopf surfaces}
In this subsection, we prove the uniform RC-positivity of primary Hopf surfaces.

First consider the Hopf surface of class 1.
Actually, the computation in \cite{Yan17} has shown that the Gauduchon metric constructed there is uniformly RC-positive. 
We extract the proof here for the reader's convenience.
Let $H_{a,b}=\CC^2\setminus\{0\}/\sim$ where $(z,w)\sim (az,bw)$ and $|a|\geq|b|>1$.
Set 
$$k_1=\log|a|, \quad k_2=\log|b|, \quad \text{and} \quad \alpha=\frac{2k_1}{k_1+k_2}\in[1,2).$$
Let $\theta(z,w)$ be a real smooth function defined by (see \cite{GO98})
$$|z|^2e^{-\frac{k_1\theta}{\pi}}+|w|^2e^{-\frac{k_2\theta}{\pi}}=1.$$
Define a real smooth function
$$\Phi(z,w)=e^{\frac{k_1+k_2}{2\pi}\theta}.$$
Then 
$$|z|^2\Phi^{-\alpha}+|w|^2\Phi^{\alpha-2}=1$$
and 
$$\theta(az,bw)=\theta(z,w)+2\pi,\quad \text{and} \quad \Phi(az,bw)=|a||b|\Phi(z,w).$$
Consider the well-defined Gauduchon metric on $H_{a,b}$ given in \cite[Proposition 5.8]{Yan17} by
$$\omega:=\sqrt{-1}\left(\frac{\Phi^{-\alpha}}{\alpha^2}dz\wedge d\bar{z}+
\frac{\Phi^{\alpha-2}}{(2-\alpha)^2}dw\wedge d\bar{w}\right).$$
Write $z^1=z$ and $z^2=w$. 
Let $\omega=\sqrt{-1}\sum_{i,j=1}^{2}g_{i\bar{j}}dz^i\wedge d\bar{z}^j$ with $g_{i\bar{j}}=f_i\delta_{ij}$, where
$$f_1=\frac{\Phi^{-\alpha}}{\alpha^2} \quad \text{and} \quad f_2=\frac{\Phi^{\alpha-2}}{(2-\alpha)^2}.$$
Set 
$$A_{i\bar{j}}=\frac{\partial^2\log\Phi}{\partial z^i\partial\bar{z}^j} \quad \text{and} \quad 
\Delta=\alpha|z|^2\Phi^{-\alpha}+(2-\alpha)|w|^2\Phi^{\alpha-2}>0.$$
Then by \cite[Lemma 5.7]{Yan17}, the matrix
\begin{equation*}
    A=\frac{\Phi^{-2}}{\Delta^3}
    \begin{pmatrix}
        (2-\alpha)^2|w|^2 & \alpha(\alpha-2)\bar{z}w \\
        \alpha(\alpha-2)z\bar{w} & \alpha^2|z|^2
    \end{pmatrix}
\end{equation*}
is semi-positive and $\det A=0$.
Moreover, on $\CC^2\setminus\{0\}$, 
$$\tr A=\frac{\Phi^{-2}}{\Delta^3}((2-\alpha)^2|w|^2+\alpha^2|z|^2)>0.$$
Thus, $\rk A=1$ everywhere.
At every point $x\in H_{a,b}$, there exists $0\neq u\in T_xH_{a,b}$ satisfying 
$$A(u,\bar{u})>0.$$
By \cite[formula (5.17)]{Yan17}, $R_{i\bar{j}k\bar{l}}=A_{i\bar{j}}B_{k\bar{l}}$, where
\begin{equation*}
    B=\begin{pmatrix}
        \frac{\Phi^{-\alpha}}{\alpha} & 0 \\
        0 & \frac{\Phi^{\alpha-2}}{2-\alpha}
    \end{pmatrix}
\end{equation*}
is positive definite.
Therefore, for any $0\neq v\in T_xH_{a,b}$,
$$R^\omega(u,\bar{u},v,\bar{v})=A(u,\bar{u})B(v,\bar{v})>0.$$
This means that $\omega$ is uniformly RC-positive.
Combining the result of Yang, we obtain the following proposition.
\begin{proposition}[{cf.~\cite[Proposition 5.8]{Yan17}}]\label{Proposition:Hopf class 1}
    On every Hopf surface $H_{a,b}$ of class 1, there exists a Gauduchon metric $\omega$ with semi-positive holomorphic bisectional curvature.
    Moreover, $\omega$ is uniformly RC-positive.
\end{proposition}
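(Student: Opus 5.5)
The plan is to verify directly, on the universal cover $\CC^2\setminus\{0\}$, that the explicit Hermitian form $\omega$ introduced just before the statement does everything required.

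\textbf{Step 1: $\omega$ descends to $H_{a,b}$.} Under $(z,w)\mapsto(az,bw)$ we have $\Phi\mapsto|a||b|\Phi$, so $\Phi^{-\alpha}|dz|^2$ picks up the factor $(|a||b|)^{-\alpha}|a|^2$. Since $\alpha=2k_1/(k_1+k_2)$, this factor equals $\exp(-\alpha(k_1+k_2)+2k_1)=1$. The same computation applies to $\Phi^{\alpha-2}|dw|^2$, because $(2-\alpha)(k_1+k_2)=2k_2$. Smoothness of $\theta$, and hence of $\Phi$, follows from the implicit function theorem: the left side of the defining equation is strictly decreasing in $\theta$.

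\textbf{Step 2: the curvature formula.} Since $\omega$ is diagonal with $g_{i\bar i}=f_i$, the Chern curvature is
$$R_{i\bar j k\bar l}=-\partial_i\partial_{\bar j}\log f_k\,\delta_{kl}\,f_k .$$
Also $\log f_1=-\alpha\log\Phi+\mathrm{const}$ and $\log f_2=(\alpha-2)\log\Phi+\mathrm{const}$. This gives
$$R_{i\bar j 1\bar1}=\alpha A_{i\bar j}f_1=A_{i\bar j}\frac{\Phi^{-\alpha}}{\alpha},\qquad R_{i\bar j2\bar2}=(2-\alpha)A_{i\bar j}f_2=A_{i\bar j}\frac{\Phi^{\alpha-2}}{2-\alpha},$$
so $R=A\otimes B$, which recovers \cite[formula (5.17)]{Yan17}. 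One caveat: when $\alpha=1$ we have $f_2$ with $2-\alpha=1$, so the formula is fine. Since $\alpha<2$, $B$ is always positive definite.

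\textbf{Step 3: the Hessian of $\log\Phi$.} This is the main computational obstacle. Write $\psi=\log\Phi$, so the constraint reads
$$|z|^2e^{-\alpha\psi}+|w|^2e^{(\alpha-2)\psi}=1 .$$
Differentiating implicitly once gives $\partial\psi=(\bar z\Phi^{-\alpha}dz+\bar w\Phi^{\alpha-2}dw)/\Delta$. Differentiating a second time and simplifying should yield the rank-one matrix $A$ displayed above, as in \cite[Lemma 5.7]{Yan17}. To finish, I would check that $A$ is a nonnegative multiple of $\xi\otimes\bar\xi$ with $\xi=((2-\alpha)\bar w,\,-\alpha\bar z)$, which is nonzero on $\CC^2\setminus\{0\}$. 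Hence $A\ge0$, $\det A=0$ and $\tr A>0$.

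\textbf{Step 4: conclusion.} Semipositive bisectional curvature is immediate from $R(u,\bar u,v,\bar v)=A(u,\bar u)B(v,\bar v)\ge0$. For uniform RC-positivity, at each point choose $u$ with $A(u,\bar u)>0$, for instance $u=\bar\xi$. Then $R(u,\bar u,v,\bar v)>0$ for every $v\neq0$, so $\omega$ is uniformly RC-positive.

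The Gauduchon property, $\partial\bar\partial\omega=0$ in dimension two, I would quote from \cite[Proposition 5.8]{Yan17}. If one prefers to check it directly, it reduces to a computation in $\psi$ of the same kind as Step 3.
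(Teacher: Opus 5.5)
Your proposal follows the paper's argument: Yang's explicit metric, the factorization $R=A\otimes B$ with $A\ge 0$ of rank one ($\tr A>0$) and $B>0$, and a choice of $u$ with $A(u,\bar u)>0$ at each point. The only difference is that you carry out the descent, curvature and Hessian computations that the paper imports from \cite{Yan17}.

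There is one slip in your illustrative witness. The displayed matrix gives $A_{i\bar j}\propto\bar\xi_i\xi_j$, not $\xi_i\bar\xi_j$. Equivalently, $A(u,\bar u)=\frac{\Phi^{-2}}{\Delta^3}\,|(2-\alpha)w\,u^1-\alpha z\,u^2|^2$. So $u=\xi$ works, whereas $u=\bar\xi$ makes the inner expression equal to $(2-\alpha)^2w^2+\alpha^2z^2$, which vanishes along $w=\pm\frac{\sqrt{-1}\,\alpha}{2-\alpha}\,z$. This is harmless, since the existence of such a $u$ already follows from $A\ge 0$ and $\tr A>0$.
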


Now consider the Hopf surface of class 0.
We obtain the following result.
\begin{proposition}\label{Proposition:Hopf class 0}
    On every Hopf surface $H_{a,b,\lambda,m}$ of class 0, there exists a uniformly RC-positive metric.
\end{proposition}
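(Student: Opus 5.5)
We prove uniform RC-positivity on $H_{a,b,\lambda,m}=(\CC^2\setminus\{0\})/\langle\gamma\rangle$, where $\gamma(z,w)=(b^mz+\lambda w^m,bw)$, $|b|>1$, $\lambda\neq0$. We will not compute the curvature of an explicit metric. Instead, we exploit the fact that the radial (Euler) direction is a holomorphic vector field, and build the metric from a conformal factor together with a flat-like frame.

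\textbf{Step 1: coordinates adapted to $\gamma$.} After rescaling $z$ we may assume $\lambda=b^m$, so that $\gamma(z,w)=b^m(z+w^m),bw)$ up to normalization. The key point is to write $\gamma=D\circ N$, where $D(z,w)=(b^mz,bw)$ is a class-1 contraction and $N(z,w)=(z+w^m,w)$ is a unipotent map commuting with $D$ (indeed $D N D^{-1}=N$, since $b^m(z+w^m)=b^mz+(bw)^m$). Both $N$ and $D$ preserve the quasi-homogeneous weights $(m,1)$.

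\textbf{Step 2: a $\gamma$-invariant metric.} We take the weighted radius $\Phi$ defined as in the class-1 case with $a=b^m$. It is defined by $|z|^2\Phi^{-\alpha}+|w|^2\Phi^{\alpha-2}=1$, which here reads $|z|^2\Phi^{-2m/(m+1)}+|w|^2\Phi^{-2/(m+1)}=1$, and it satisfies $\Phi\circ D=|b|^{m+1}\Phi$. The function $\Phi$ is not $N$-invariant. However, the holomorphic frame
\[
e_1=\partial_z,\qquad e_2=\partial_w-mw^{m-1}\theta\,\partial_z
\]
needs modification, because of the logarithmic term. I would instead use the standard fact that $H_{a,b,\lambda,m}$ is a $\mathbb{C}^*$-equivariant degeneration: it admits the global nowhere-vanishing holomorphic vector fields $V=mz\partial_z+w\partial_w+$ (correction) and $W=\partial_z$, both $\gamma$-invariant up to constants. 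Concretely, $\gamma_*\partial_z=b^m\partial_z$, and the Euler-type field $V=mz\partial_z+w\partial_w$ satisfies $\gamma_*V=V+$ a multiple of $w^m\partial_z$. So $TX$ has a holomorphic filtration $0\to L\to TX\to Q\to0$, with $L$ spanned by $\partial_z$ twisted by the character $b^m$, and $Q$ spanned by the class of $w\partial_w$.

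We then put the metric $h=\Phi^{-\alpha}|{\cdot}|^2$ on $L$, i.e.\ $|\partial_z|^2=\Phi^{-\alpha}$, and a quotient metric on $Q$ of the form $|w\partial_w|^2=1$ (the class of $w\partial_w$ is $\gamma$-invariant modulo $L$). Both $L$ and $Q$ then have curvature $\alpha\,\ddbar\log\Phi\geq0$ and $0$ respectively. We glue them by a $C^\infty$ splitting with a small parameter $\epsilon$, taking $h_\epsilon=h_L\oplus\epsilon^{-1}h_Q$ in a smooth splitting. As in (\ref{Formula:curvature of subbundle}), the curvature is block-triangular up to terms $O(\sqrt\epsilon)$ coming from the second fundamental form.

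\textbf{Step 3: positivity.} The expected main obstacle is that $Q$ is flat, so no direction $u$ makes $R(u,\bar u,v,\bar v)>0$ for $v$ in the $Q$-direction. To fix this, I would add a conformal factor $e^{-F}$ with $F=c\log\Phi$, which is allowed since $\ddbar\log\Phi=A\geq0$ with $\rk A=1$ everywhere, by the computation preceding Proposition~\ref{Proposition:Hopf class 1} with $a=b^m$. Note that $\Phi^{c}$ is not $\gamma$-invariant, so this factor changes the bundle to $TX\otimes(\text{flat line bundle})$. To compensate, we rescale $h_L,h_Q$ by $\Phi^{-c}$, which keeps $\gamma$-invariance because $\gamma$ multiplies $\Phi$ by a constant. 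Using the formula $R^H=e^{-F}(R^h+\ddbar F\otimes h)$ from the conformal change lemma, the total curvature in the direction $u$ with $A(u,\bar u)>0$ becomes
\[
cA(u,\bar u)|v|^2+\alpha A(u,\bar u)|v_L|^2+O(\sqrt\epsilon)|u|^2|v|^2 .
\]
This is positive for small $\epsilon$, by compactness and the uniform lower bound on $\sup_u A(u,\bar u)/|u|^2$. Checking that this rescaling is consistent with the transition of $V$ under $\gamma$, namely the $w^m\partial_z$ term, is the delicate computation I would do first. It is essentially the class-1 curvature identity $R=A\otimes B$ perturbed by the unipotent part.
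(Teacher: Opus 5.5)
There is a genuine gap: you never produce a smooth, $\gamma$-invariant metric on $TX$, and the proposed source of positivity in the $Q$-direction cannot work.

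First, $|\partial_z|^2=\Phi^{-\alpha}$ does not descend to $X$. We have $\Phi\circ\gamma=|b|^{m+1}\,\Phi\circ N$, and you note yourself that $\Phi\circ N\neq\Phi$. Your later claim that ``$\gamma$ multiplies $\Phi$ by a constant'' contradicts this.

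Second, the metric $|w\partial_w|^2=1$ on $Q$ is not smooth. The class $[w\partial_w]$ vanishes along the elliptic curve $E=\{w=0\}/\langle\gamma\rangle$, so this is the singular metric whose curvature is the current of integration on $E$. In fact $Q$ has no Chern-flat metric at all, since it has positive degree (see below). So the ``main obstacle'' is misdiagnosed.

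Third, Step 3 fails. Multiplying by $\Phi^{-c}$ \emph{is} the conformal change $e^{-c\log\Phi}$, so your ``compensation'' is the same operation. It destroys invariance even for a function with $\Phi\circ\gamma=\kappa\Phi$, since it acquires the factor $\kappa^{-c}\neq 1$. You therefore get a metric on $TX\otimes L_\mu$, not on $TX$, and undoing the twist reintroduces $-c\,\ddbar\log\Phi$. More fundamentally, any genuine $F\in C^\infty(X)$ satisfies $\ddbar F\leq 0$ at a maximum point of $F$. So a global conformal factor can never supply a positive term like $cA(u,\bar u)|v|^2$ at every point; positivity must come from the metric on the bundle itself. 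The ``delicate computation'' you postpone is exactly where the proof would have to happen.

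The paper avoids any invariant computation on $X_\lambda$. Take $a=b^m$ and $\tau=\theta/2\pi$, so $\tau\circ F_0=\tau+1$. The smooth map $\Psi_t(z,w)=(z+\tfrac{t}{a}w^m\tau,w)$ satisfies $\Psi_t\circ F_0=F_t\circ\Psi_t$. It descends to a diffeomorphism $H_{a,b}\to H_{a,b,t,m}$ tending to the identity, so the transported complex structures converge in $C^\infty$. Averaging Yang's class-1 metric and using that uniform RC-positivity is an open condition handles small $t$. Finally $G_s(z,w)=(sz,w)$ gives $X_\lambda\simeq X_{s\lambda}$, so small $\lambda$ suffices.

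Your filtration idea can also be repaired, by a genuinely different route. The field $w^m\partial_z$ is a $\gamma$-invariant section of $L$ vanishing to order $m$ on $E$. The field $mz\partial_z+w\partial_w$ is exactly $\gamma$-invariant, so its class $[w\partial_w]$ is a section of $Q$ vanishing simply on $E$. Hence $L\cong\mathcal{O}_X(mE)$ and $Q\cong\mathcal{O}_X(E)$, with degrees $m\int_E\omega>0$ and $\int_E\omega>0$ for every Gauduchon metric $\omega$. Lemma~\ref{Lemma:Uniformly RC-positive for Kato surfaces} with $Z=\emptyset$ then applies directly. This is your $\epsilon$-splitting argument, but with smooth metrics on $L$ and $Q$ of positive $\omega$-trace curvature supplied by Theorem~\ref{Theorem:Mean curvature positivity and HN-positivity}, instead of $\Phi$-based metrics. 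This route trades the deformation argument for the Li--Zhang--Zhang existence theorem.
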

\begin{remark}
    We give a universal proof of the uniform RC-positivity of all general Hopf surfaces via the deformation and the positivity of Hopf surfaces of class 1 in Subsection~\ref{Subsection:Secondary Hopf surfaces}.
    We also provide an alternative proof here for primary Hopf surface of class 0 by an explicit deformation, which can be regarded as an explanation of our method in Subsection~\ref{Subsection:Secondary Hopf surfaces}.
\end{remark} 
\begin{proof}
    Let $$a=b^m, \quad F_t(z,w)=(az+tw^m,bw), \quad \text{and} \quad X_t:=H_{a,b,t,m}=(\CC^2\setminus\{0\})/\langle F_t \rangle.$$
    Let $\theta$ be the same proper smooth function constructed before associated to $X_0=H_{a,b}$.
    Set $$\tau=\theta/2\pi.$$
    Then we have $\tau\circ F_0=\tau+1$.
    Define a smooth map
    $$\Psi_t:\CC^2\setminus\{0\}\longrightarrow\CC^2\setminus\{0\}, \quad \Psi_t(z,w)=(z+\frac{t}{a}w^m\tau(z,w),w).$$
    Then 
    $$\Psi_t(F_0(z,w))=(az+\frac{t}{a}(bw)^m(\tau+1),bw)
    =(az+tw^m\tau+tw^m,bw),$$
    and 
    $$F_t(\Psi_t(z,w))=(a(z+\frac{t}{a}w^m\tau)+tw^m,bw)
    =(az+tw^m\tau+tw^m,bw).$$
    This means that 
    \begin{equation}\label{Equation:equi-invariant relation of Ft}
        \Psi_t\circ F_0=F_t\circ\Psi_t.
    \end{equation}
    This descends to a smooth map 
    $$\overline{\Psi_t}:X_0\longrightarrow X_t, \quad 
    \overline{\Psi_t}([p]_{F_0})=[\Psi_t(p)]_{F_t}.$$
    Take a compact subset 
    $$K:=\{0\leq\tau(z,w)\leq1\}\subset\CC^2\setminus\{0\}.$$
    It is easy to see 
    $$\Psi_t\rightarrow\Id \quad \text{in } C^\infty(K).$$
    This implies that for $|t|\ll1$, $d\Psi_t$ is invertible at every point in $K$.
    Note that 
    \begin{equation}\label{Equation:Differential of Ft}
        d\Psi_t|_{F_0(x)}\circ dF_0|_x=dF_t|_{\Psi_t(x)}\circ d\Psi_t|_x,
    \end{equation}
    where $dF_0|_x$ and $dF_t|_{\Psi_t(x)}$ are linear isomorphisms by the definition.
    Therefore, for any integer $k$, $d\Psi_t|_{F^k_0(x)}$ is invertible if and only if $d\Psi_t|_x$ is invertible.
    On the other hand, by $\tau\circ F_0=\tau+1$, we know that for any $x\in\CC^2\setminus\{0\}$, there is an integer $k=-\lfloor \tau(x) \rfloor$ such that $F^k_0(x)\in K$.
    This implies that $d\Psi_t$ is invertible at every point in $\CC^2\setminus\{0\}$.
    Hence, $\overline{\Psi_t}$ is a local diffeomorphism. 
    Therefore, the proper map $\overline{\Psi_t}$ is a finite covering. 
    Formula~(\ref{Equation:equi-invariant relation of Ft}) shows that $\overline{\Psi_t}$ satisfies
    $$(\overline{\Psi_t})_*:\pi_1(X_0)=\mathbb{Z}\longrightarrow\pi_1(X_t)=\mathbb{Z}, \quad 1\longmapsto 1,$$
    that is, it sends the generator $F_0$ to the generator $F_t$.
    Hence the covering $\overline{\Psi_t}$ has one sheet, and therefore $\overline{\Psi_t}$ is a diffeomorphism.
    
    Let $J_{X_t}$ be the complex structure on $X_t$ induced by the standard complex structure $J_{st}$ on $\CC^2\setminus\{0\}$. 
    Then we obtain a family of complex structures on $X_0$ by 
    $$J_t:=\overline{\Psi_t}^*J_{X_t}.$$
    On the covering space this is represented by
    $$\widetilde{J_t}=(d\Psi_t)^{-1}J_{st}d\Psi_t,$$
    since formula (\ref{Equation:Differential of Ft}) shows that $\widetilde{J_t}$ is $F_0$-invariant, which means that $\widetilde{J_t}$ descends to $J_t$.
    It is easy to see $\widetilde{J_t}\rightarrow J_{st}$ in $C^\infty(K)$.
    Then we obtain $J_t\rightarrow J_0$ in $C^\infty$ on the compact quotient.
    
    Let $g_0$ be the Riemannian metric associated with the Gauduchon metric $\omega$ constructed in Proposition~\ref{Proposition:Hopf class 1}.
    Define
    $$g_t(\xi,\eta):=\frac{1}{2}(g_0(\xi,\eta)+g_0(J_t\xi,J_t\eta)).$$
    Then $g_t$ is a $J_t$-Hermitian metric, and 
    $$g_t\rightarrow g_0 \quad \text{in } C^\infty.$$
    Therefore, the curvature tensors satisfy
    $$R^{g_t}\rightarrow R^{g_0} \quad \text{in } C^\infty.$$
    Define 
    $$\mu_t:=\inf_{x\in X_0}\sup_{|u|_{g_t}=1}\inf_{|v|_{g_t}=1}R^{g_t}(u,\bar{u},v,\bar{v}).$$
    By Proposition~\ref{Proposition:Hopf class 1}, $\mu_0>0$.
    Hence, by the continuous dependence of $\mu_t$ on $t$, 
    $$\mu_t>0 \quad \text{for } 0<|t|\ll 1.$$
    Since $g_t$ can be regarded as a Hermitian metric on $X_t$ via the diffeomorphism $\overline{\Psi_t}$, we obtain that $(X_t,g_t)$ is uniformly RC-positive for $t$ small enough.
    
    Finally, define a biholomorphic map
    $$G_s(z,w)=(sz,w), \quad s\neq 0.$$
    Then for any $\lambda\neq0$, we have 
    $$G_s\circ F_\lambda=F_{s\lambda}\circ G_s.$$
    This gives the isomorphism
    $$X_\lambda\simeq X_{s\lambda}.$$
    By taking $0\neq s$ such that $s\lambda$ small enough, we know that $X_\lambda$ admits a uniformly RC-positive metric.
    
\end{proof}

\subsection{Quotients of primary Hopf surfaces} \label{Subsection:Secondary Hopf surfaces}

In this subsection, we prove the uniform RC-positivity of general Hopf surfaces.

The first lemma uses precisely the special feature of Yang's formula.
\begin{lemma}\label{Lemma:Finite quotient of Yang's metric}
    Let $p:X\rightarrow Y=X/G$ be a finite \'etale Galois covering of compact complex surfaces.
    Suppose that $TX$ admits a Hermitian metric $h$ whose curvature factors as
    $$R^h=A\otimes B$$
    where $A$ is a smooth semi-positive $(1,1)$-form of rank one everywhere and $B$ is a positive definite Hermitian form on $TX$.
    Then $TY$ is uniformly RC-positive.
\end{lemma}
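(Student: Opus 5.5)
The approach is to average $h$ over the Galois group $G$ and to check that the averaged metric descends to $Y$ and is still uniformly RC-positive; the rank-one factorization of $R^h$ is what makes this work. The natural first candidate is the average $\widetilde h:=\frac{1}{|G|}\sum_{\sigma\in G}\sigma^*h$. It is $G$-invariant, so it descends to a metric on $TY\cong (TX)/G$. The difficulty is that the Chern curvature is not additive under averaging metrics. I would therefore avoid averaging metrics directly and instead exploit the structure of $A$.

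The key observation is that, for each $\sigma\in G$, the pulled-back metric $\sigma^*h$ has curvature $\sigma^*A\otimes\sigma^*B$, because $\sigma$ is a biholomorphism. Since $A$ has rank one, at each point $x$ it has a one-dimensional kernel and a distinguished direction. The plan is to show that the kernel line field $\ker A$ is $G$-invariant, or to reduce to that situation. In Yang's formula the kernel of $A$ is spanned by the Euler-type vector field $\alpha z\partial_z+(2-\alpha)w\partial_w$, which is intrinsic to the Hopf structure (it generates the $\mathbb C^*$-action commuting with the deck transformation). I expect this field to be preserved by the finite automorphisms, so that the differentials $d\sigma$ send $\ker A_x$ to $\ker A_{\sigma x}$.

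Granting invariance of the kernel line, I would pick at each point $x$ a vector $u\notin\ker A_x$. Then $u\notin\ker(\sigma^*A)_x$ for every $\sigma$, and so $(\sigma^*A)(u,\bar u)>0$ for all $\sigma$ simultaneously. Next I would take the descended metric $H$ on $TY$ with $p^*H=\widetilde h$ and compute its curvature. Writing $\widetilde h=\sum_\sigma \sigma^*h$, the curvature formula for a sum of metrics expresses $R^{\widetilde h}(u,\bar u,v,\bar v)$ as a combination of the individual curvatures $R^{\sigma^*h}(u,\bar u,v,\bar v)$ plus a term that is nonnegative in the sense of Griffiths. I would use the standard fact that a direct-sum metric followed by the diagonal quotient (or sub)bundle formula (\ref{Formula:curvature of subbundle}) controls this term. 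Concretely, $(TX,\widetilde h)$ is the diagonal in $\bigoplus_\sigma(TX,\sigma^*h)$, and taking it as a quotient of the direct sum gives
\[
R^{\widetilde h}(u,\bar u,v,\bar v)\ \ge\ \min_\sigma\frac{R^{\sigma^*h}(u,\bar u,v_\sigma,\bar v_\sigma)}{|v_\sigma|^2}\cdot|v|^2 .
\]
Here $v_\sigma$ is the minimal-norm lift, and each term is positive because $\sigma^*A(u,\bar u)>0$ and $\sigma^*B$ is positive definite. Realizing $TX$ as a quotient of $\bigoplus_\sigma TX$ by the sum map $(v_\sigma)\mapsto\sum_\sigma v_\sigma$, the induced quotient metric is $G$-invariant and has curvature at least that of the summands in direction $u$. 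Since quotients improve curvature by formula (\ref{Formula:curvature of subbundle}), this gives uniform RC-positivity upstairs with a $G$-invariant metric. Because $p$ is étale, this metric descends to $TY$.

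The main obstacle is the invariance of $\ker A$ under $G$. If that invariance fails, I would fall back on a genericity argument instead. For each $x$ the finitely many lines $d\sigma^{-1}(\ker A_{\sigma x})$ are proper subspaces of the two-dimensional space $T_xX$, so a vector $u$ avoiding all of them always exists. This argument seems sufficient on its own and avoids any Hopf-specific structure. The only care needed is that the quotient-metric argument requires a single $u$ to work for all summands at once, which is exactly what this avoidance provides.
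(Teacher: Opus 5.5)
Your final argument is the paper's proof: choose at each point a $u$ outside the finitely many lines $\ker(\sigma^*A)_x=d\sigma_x^{-1}(\ker A_{\sigma x})$, form $\bigoplus_{\sigma\in G}(TX,\sigma^*h)$, pass to the quotient metric under the sum map $(v_\sigma)\mapsto\sum_\sigma v_\sigma$, check $G$-invariance, and descend. The detour through invariance of $\ker A$ (the Euler field) is unnecessary. The genericity argument is all the paper uses, and it is all that is available for an abstract rank-one $A$.

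You should, however, drop the identification of this quotient metric with the average $\widetilde h=\frac{1}{|G|}\sum_\sigma\sigma^*h$, because it is wrong as stated.
\begin{itemize}
\item The diagonal $v\mapsto(v,\dots,v)$ realizes $\sum_\sigma\sigma^*h$ as a \emph{subbundle} metric of the direct sum. By (\ref{Formula:curvature of subbundle}), its curvature is the restricted curvature \emph{minus} $|\beta(u)v|^2$.
\item So your claim that the curvature of a sum of metrics equals a combination of the summands' curvatures plus a Griffiths-nonnegative term is false. Already for line bundles, the weight $-\log(e^{-\varphi_1}+e^{-\varphi_2})$ of $e^{-\varphi_1}+e^{-\varphi_2}$ has $\ddbar$ bounded \emph{above} by a convex combination of the $\ddbar\varphi_i$.
\item The sum map instead gives $|w|^2=\inf\{\sum_\sigma|v_\sigma|^2_{\sigma^*h}:\sum_\sigma v_\sigma=w\}$, whose dual metric is $\sum_\sigma(\sigma^*h)^*$. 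This is in general a different metric from the average. It is the one for which your displayed inequality with minimal-norm lifts holds, by the second line of (\ref{Formula:curvature of subbundle}).
\end{itemize}

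With the quotient metric used throughout, the $G$-invariance you assert is the routine check in the paper's Step 2. For $k\in G$, the map $(v_\sigma)_\sigma\mapsto(dk_x(v_{\sigma k}))_\sigma$ is an isometry from the direct sum at $x$ to the direct sum at $kx$. It intertwines the sum map with $dk_x$, so it preserves the infimum. The metric then descends because $p$ is a local biholomorphism, exactly as you say.
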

\begin{proof}
    
    \medskip
    \noindent\textbf{Step 1: construct a suitable metric on $X$.}
    
    For $\sigma\in G$, define $h_\sigma=\sigma^*h$, $A_\sigma=\sigma^*A$ and $B_\sigma=\sigma^*B$. 
    Then we have 
    $$R^{h_\sigma}=A_\sigma\otimes B_\sigma.$$
    Fix some point $x\in X$. 
    For each $\sigma$, the kernel $K_{\sigma,x}:=(\ker A_\sigma)_x$ is a proper complex line in the two-dimensional space $T_xX$,
    since $A_\sigma$ is semi-positive of rank one.
    A finite union of proper complex lines cannot cover the two-dimensional space $T_xX$.
    Hence, we can choose $0\neq u\in T_xX\setminus\bigcup_{\sigma}K_{\sigma,x}$.
    Then for every $\sigma\in G$ and every $0\neq v\in T_xX$, 
    $$R^{h_\sigma}(u,\bar{u},v,\bar{v})=A_\sigma(u,\bar{u})B_\sigma(v,\bar{v})>0.$$
    
    Consider the holomorphic Hermitian vector bundle
    $$(E,H):=\bigoplus_{\sigma\in G}(TX,h_\sigma)$$
    and the surjective holomorphic bundle map
    $$\Sigma:E\longrightarrow TX, \quad \Sigma((v_\sigma)_{\sigma\in G}):=\sum_{\sigma\in G}v_\sigma.$$
    Let $\widetilde{h}$ denote the quotient metric on $TX$ induced by $(E,H)$.
    It is easy to see that for the direction $u$ selected above and every $v_\sigma\in(TX,h_\sigma)$ with $0\neq (v_\sigma)_{\sigma\in G}$, 
    $$R^H(u,\bar{u},(v_\sigma)_{\sigma\in G},\overline{(v_\sigma)_{\sigma\in G}})
    =\sum_{\sigma\in G}R^{h_\sigma}(u,\bar{u},v_\sigma,\bar{v_\sigma})>0.$$
    This shows that $(E,H)$ is uniformly RC-positive.
    Hence $(TX,\widetilde{h})$ is also uniformly RC-positive by Proposition~\ref{Proposition:Basic property for uniformly RC-positive vector bundle} (or formula~(\ref{Formula:curvature of subbundle})).
    
    \medskip
    \noindent\textbf{Step 2: prove that $\widetilde{h}$ is $G$-invariant.}
    
    For $k\in G$, define a holomorphic linear map from $E$ to $k^*E$ by
    $$T_k:E_x\longrightarrow E_{kx}, \quad 
    (T_k(v))_\sigma:=dk_x(v_{\sigma k}),$$
    where $v=(v_\sigma)_{\sigma\in G}\in E_x$.
    Note that 
    \begin{align*}
        (T_lT_k(v))_\sigma&=dl_{kx}(T_k(v)_{\sigma l})=dl_{kx}dk_x(v_{\sigma lk}) \\
        &=d(lk)_x(v_{\sigma(lk)})=(T_{lk}(v))_\sigma.
    \end{align*}
    Thus these maps $\{T_k\}_{k\in G}$ form a left action.
    By the definition of $H$, 
    $$|T_k(v)|^2_{H,kx}=\sum_{\sigma}|(T_k(v))_{\sigma}|^2_{h_\sigma,kx}=\sum_{\sigma}|dk_x(v_{\sigma k})|^2_{h_\sigma,kx}.$$
    By the definition of $h_\sigma=\sigma^*h$, 
    $$(h_\sigma)_{kx}(dk_xu,dk_xv)=(k^*h_\sigma)_x(u,v)=(h_{\sigma k})_x(u,v).$$
    Thus, 
    $$|dk_x(v_{\sigma k})|^2_{h_\sigma,kx}=|v_{\sigma k}|^2_{h_{\sigma k},x}.$$
    Therefore, 
    $$|T_k(v)|^2_{H,kx}=\sum_{\sigma}|v_{\sigma k}|^2_{h_{\sigma k},x}
    =\sum_{\tau\in G}|v_\tau|^2_{h_\tau,x}=|v|^2_{H,x}.$$
    It is easy to see that $T_{k^{-1}}$ is the inverse of $T_k$.
    Hence, $T_k$ is an isometry of $(E,H)$.
    Moreover, for any $v\in E_x$,
    \begin{align*}
        \Sigma(T_k(v))&=\sum_{\sigma}(T_k(v))_\sigma=\sum_{\sigma}dk_x(v_{\sigma k}) \\
        &=dk_x(\sum_{\tau}v_\tau)=dk_x\Sigma(v),
    \end{align*}
    that is, 
    $\Sigma\circ T_k=dk_x\circ\Sigma$.
    We have the commutative diagram
    \begin{equation*}
        \begin{tikzcd}
            E_x \ar[r,"T_k"] \ar[d,"\Sigma"]& E_{kx} \ar[d,"\Sigma"]      \\
            T_xX \ar[r,"dk_x"]& T_{kx}X.
        \end{tikzcd}
    \end{equation*}
    This means that $\Sigma$ is equivariant.
    For any $w\in T_xX$, by the definition of $\widetilde{h}$, we have 
    $$|dk_x(w)|^2_{\widetilde{h},kx}=\inf\{|W|^2_{H,kx} ~|~\Sigma(W)=dk_x(w) \}.$$
    Note that for any $v\in E_x$ satisfying $\Sigma(v)=w$, we have 
    $$\Sigma(T_k(v))=dk_x(w) \quad \text{and} \quad |T_k(v)|^2_{H,kx}=|v|^2_{H,x}.$$
    This implies that 
    $$|dk_x(w)|^2_{\widetilde{h},kx}\leq\inf\{|v|^2_{H,x} ~|~\Sigma(v)=w \}=|w|^2_{\widetilde{h},x}.$$
    By considering the $k^{-1}$ action, we can obtain the inverse inequality. 
    Therefore, we have 
    $$|dk_x(w)|^2_{\widetilde{h},kx}=|w|^2_{\widetilde{h},x}.$$
    This means that $\widetilde{h}$ is invariant under deck transformations, that is, 
    $\widetilde{h}$ is $G$-invariant.
    
    \medskip
    \noindent\textbf{Step 3: descend $\widetilde{h}$ to a metric on $Y$.}
    
    Note that $p$ is a local biholomorphic map.
    For any $y=p(x)\in Y$ and any $u,v\in T_xX$, define a metric $h_Y$ on $TY$ by
    $$h_Y(dp_x(u),dp_x(v)):=\widetilde{h}(u,v).$$
    For $kx$ with $y=p(kx)$, we have $dp_{kx}=dp_x\circ (dk_x)^{-1}$.
    The invariance of $\widetilde{h}$ shows that the definition is independent of the choice of $x$.
    Thus $h_Y$ is a well-defined metric on $Y$.
    In Step 1, we have shown that $(TX,\widetilde{h})$ is uniformly RC-positive, that is, 
    at every point $x\in X$, there is a nonzero vector $u\in T_xX$ such that for any nonzero vector $v\in T_xX$, one has 
    $$R^{\widetilde{h}}(u,\bar{u},v,\bar{v})>0.$$
    Then for any $y=p(x)\in Y$, take $e=dp_x(u)\in T_yY$. 
    For any nonzero $f\in T_yY$, take $v=dp^{-1}_x(f)\in T_xX$.
    By the definition, we have 
    $$R^{h_Y}(e,\bar{e},f,\bar{f})=R^{\widetilde{h}}(u,\bar{u},v,\bar{v})>0.$$
    This shows that $(TY,h_Y)$ is uniformly RC-positive.
    
    This completes the proof of Lemma~\ref{Lemma:Finite quotient of Yang's metric}.
\end{proof}

We need the following openness of uniform RC-positivity in a proper holomorphic family.
\begin{lemma}\label{Lemma:Openness in a proper holomorphic family}
    Let $f:X\longrightarrow\Delta\subset\CC$ be a proper holomorphic submersion over a disc, with compact fibers $X_t:=f^{-1}(t)$.
    If $T_{X_0}$ admits a uniformly RC-positive Hermitian metric, then so does $T_{X_t}$ for all sufficiently small $|t|$.
\end{lemma}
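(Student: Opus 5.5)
We follow the strategy used in the proof of Proposition~\ref{Proposition:Hopf class 0}: trivialize the family differentiably, transport the metric, and use that uniform RC-positivity is an open condition on a compact manifold. By Ehresmann's theorem, after shrinking $\Delta$ there is a diffeomorphism $\Phi:X_0\times\Delta\to X$ with $f\circ\Phi=\mathrm{pr}_2$ and $\Phi(\cdot,0)=\Id$. Pulling back, we get a smooth family of integrable almost complex structures $J_t$ on the fixed compact manifold $M:=X_0$, depending smoothly on $t$ with $J_0$ the original one, and $(M,J_t)\cong X_t$ biholomorphically.

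Let $h_0$ be a uniformly RC-positive Hermitian metric on $T_{X_0}$, viewed as a $J_0$-Hermitian inner product $g_0$ on $TM$. As in Proposition~\ref{Proposition:Hopf class 0}, set
$$g_t(\xi,\eta):=\tfrac12\bigl(g_0(\xi,\eta)+g_0(J_t\xi,J_t\eta)\bigr).$$
This is $J_t$-Hermitian and smooth in $(x,t)$. The holomorphic tangent bundle $T^{1,0}_{J_t}M$ varies smoothly, and it is identified with $(TM,J_t)$. The Chern connection of $(T^{1,0}_{J_t}M,g_t)$ is built from $g_t$, $J_t$ and their first derivatives, so the curvature $R^{g_t}$ depends continuously on $t$ in $C^0$. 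More invariantly, we may consider the bundle $TX/\Delta$ (the relative tangent bundle, which is holomorphic since $f$ is a submersion) with a smooth Hermitian metric $h$ extending $h_0$, for example $h:=(\Phi^{-1})^*g_t$ made Hermitian fiberwise. Its restriction to each fiber $X_t$ is a Hermitian metric on $T_{X_t}$, with curvature given by the restriction of the curvature of $(T_{X/\Delta},h)$ to fiber directions. This formulation makes continuity in $t$ transparent: the fiberwise Chern curvature is a smooth tensor on $X$.

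Fix any smooth reference metric on $T_{X/\Delta}$ and define
$$\mu(t):=\inf_{x\in X_t}\ \sup_{|u|=1}\ \inf_{|v|=1} R^{h|_{X_t}}(u,\bar u,v,\bar v).$$
Then $\mu(0)>0$ by Proposition~\ref{Proposition:Constants in RC-positive}. The key step, and the main point to check, is that positivity persists for $|t|$ small. We argue by contradiction and compactness. Suppose $t_k\to0$ and $x_k\in X_{t_k}$ satisfy $\sup_u\inf_v R\le 0$ at $x_k$. Pass to a subsequence with $x_k\to x\in X_0$. At $x$, choose the unit vector $u$ realizing the positive value $\ge\mu(0)$, and extend it to a continuous local section $\tilde u$ of $T_{X/\Delta}$. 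The function
$$(y,v)\mapsto R(\tilde u,\bar{\tilde u},v,\bar v)$$
is continuous on the unit sphere bundle, which is compact over a neighborhood of $x$, so $\inf_v$ is continuous in $y$. Hence $\inf_v R(\tilde u(x_k),\cdot)>\mu(0)/2$ for large $k$, which is a contradiction. Therefore, for $|t|$ sufficiently small, $h|_{X_t}$ is uniformly RC-positive on $T_{X_t}$.
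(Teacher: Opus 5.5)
Your proposal is correct and follows essentially the same route as the paper. Both extend $h_0$ to a smooth Hermitian metric on the relative tangent bundle $T_{X/\Delta}$, use that its Chern curvature restricts to that of each fiber, and use properness/compactness near $X_0$ to show the positive lower bound persists for small $|t|$. The differences are cosmetic: the paper builds the extension with a smooth extension lemma and a cutoff instead of your Ehresmann trivialization and averaging (valid but unnecessary), and it runs the compactness step through continuity of the pointwise function $x\mapsto\sup_u\inf_v R^h(u,\bar u,v,\bar v)/(|u|^2_\omega|v|^2_h)$ rather than through a local section $\tilde u$.
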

\begin{proof}
    Consider the relative tangent bundle $V:=T_{X/\Delta}=\ker df$ with $V|_{X_t}=TX_t$.
    Let $h_0$ be a uniformly RC-positive metric on $X_0$. 
    We first extend the given metric $h_0$ on $V|_{X_0}$ to a smooth metric on $V$.
    By the smooth extension theorem \cite[Lemma 10.12]{Lee13}, we can extend $h_0$ to a smooth section $\widehat{h}$ of the real vector bundle of Hermitian forms on $V$, with $\widehat{h}|_{X_0}=h_0.$
    Then $\widehat{h}$ is positive definite on some neighborhood of $X_0$.
    Then by taking a suitable cutoff function and choosing any background Hermitian metric on $V$, we can obtain a smooth Hermitian metric $h$ on $V$ satisfying $h|_{X_0}=h_0$.
    
    For any fixed smooth metric $\omega$ on $X$, consider the function on $X$ defined by
    $$\mu(x):=\sup_{0\neq u\in V_x}\inf_{0\neq v\in V_x}\frac{R^h(u,\bar{u},v,\bar{v})}{|u|^2_\omega|v|^2_h}.$$
    By the uniform RC-positivity of $h_0$, we know that 
    $$\delta:=\inf_{x\in X_0}\mu(x)>0.$$
    Therefore, $$U:=\{\mu(x)>\frac{\delta}{2}\}$$ is an open neighborhood of $X_0$.
    We now show that $X_t\subset U$ for $|t|$ small enough.
    Otherwise, we assume that there exist a sequence $\{t_j\}$ tending to zero and a sequence $\{x_j\}$ with $x_j\in X_{t_j}\setminus U$.
    Let $K$ be a compact neighborhood of zero containing $\{t_j\}$. 
    Then $f^{-1}(K)$ is compact since $f$ is proper.
    Thus there is a subsequence of $\{x_j\}$ converging to a point $x_\infty\in f^{-1}(K)\cap (X\setminus U)$.
    By the continuity of $f$, it is easy to see that $f(x_\infty)=0$. 
    This is a contradiction. 
    Hence, $X_t\subset U$ for $|t|$ small enough.
    
    Finally, since $V|_{X_t}=TX_t$, there exists an $\omega$-unit vector $u\in T_x{X_t}$ such that 
    $$R^{h_t}_x(u,\bar{u},v,\bar{v})\geq\frac{\delta}{2}|v|^2_{h_t} \quad \text{for every } 0\neq v\in T_xX_t \quad \text{and } |t|\ll1,$$
    where $h_t:=h|_{X_t}$.
    
    This completes the proof of Lemma~\ref{Lemma:Openness in a proper holomorphic family}.
    
\end{proof}

We can now prove the uniform RC-positivity of general Hopf surfaces.
Our proof depends heavily on the deformations given by Hasegawa \cite{Has93,Has23}.

\begin{theorem}\label{Theorem:Positivity for Hopf surfaces}
    Let $X$ be a Hopf surface. 
    Then $TX$ admits a uniformly RC-positive Hermitian metric.
\end{theorem}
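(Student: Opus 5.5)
Every Hopf surface $X$ has a finite unramified Galois cover which is a primary Hopf surface, and each primary Hopf surface is either $H_{a,b}$ of class 1 or $H_{a,b,\lambda,m}$ of class 0. For the primary case we already know the result: Proposition~\ref{Proposition:Hopf class 1} covers class 1 and Proposition~\ref{Proposition:Hopf class 0} covers class 0. The difficulty is the passage to the quotient. Uniform RC-positivity need not descend along a finite \'etale cover, since an average of metrics does not obviously preserve it. Lemma~\ref{Lemma:Finite quotient of Yang's metric} gets around this, but only when the cover carries a metric whose curvature factors as $A\otimes B$. That is exactly the shape of Yang's metric on $H_{a,b}$. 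So the plan is: first treat secondary Hopf surfaces whose primary cover is of class 1 directly, and then reach all the rest through deformation and Lemma~\ref{Lemma:Openness in a proper holomorphic family}.

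\textbf{Step 1.} Suppose $X=H_{a,b}/G$ with $G$ a finite group acting freely. The metric $\omega$ of Proposition~\ref{Proposition:Hopf class 1} has $R=A\otimes B$, where $A$ is semi-positive of rank one everywhere and $B$ is positive definite. Lemma~\ref{Lemma:Finite quotient of Yang's metric} therefore applies to the Galois cover $H_{a,b}\to X$ and shows that $TX$ is uniformly RC-positive. One point to check is that $G$ really acts by biholomorphisms of $H_{a,b}$ with quotient $X$. This holds because $\pi_1(X)$ contains the contraction group $\langle F\rangle$ as a finite-index normal subgroup, after passing to a normal core if necessary.

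\textbf{Step 2.} Now let $X$ be a general Hopf surface, with $\pi_1(X)$ a finite extension of $\langle F\rangle$ and $F^{-1}(z,w)=(az+\lambda w^m,bw)$. Following Hasegawa's description of the deformations of Hopf surfaces \cite{Has93,Has23}, we will embed $X$ as the central fiber of a proper holomorphic family $\mathcal X\to\Delta$. In this family the fibers $X_t$ with $t\neq0$ are quotients of a class 1 primary surface $H_{a_t,b_t}$ by a finite group, and the whole family is isomorphic to $X$ away from the special fibre. In fact the roles are the reverse of what one first expects. The class 0 structure is the special member: the scaling trick $G_s$ from the proof of Proposition~\ref{Proposition:Hopf class 0} shows that all $\lambda\neq0$ give isomorphic surfaces, and these surfaces degenerate to $\lambda=0$. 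So the family we actually want is one whose fibres $X_t$, $t\neq0$, are all isomorphic to $X$, while $X_0$ is a quotient of $H_{a,b}$. The construction is the equivariant version of $F_t$: take $F_t(z,w)=(az+tw^m,bw)$ together with the finite group action on $\CC^2\setminus\{0\}$, conjugated by $G_s$ as before so that the finite group commutes with $F_t$ for every $t$. Hasegawa's classification lists the possible finite groups as explicit linear, or "polynomial", actions. For each entry one checks that these actions deform compatibly as $t\to0$.

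\textbf{Step 3.} Step 1 shows that $X_0$ is uniformly RC-positive. Lemma~\ref{Lemma:Openness in a proper holomorphic family} then gives the same for $X_t$ with $|t|$ small. Since $X_t\simeq X$ for $t\neq0$, we conclude that $X$ is uniformly RC-positive.

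The main obstacle is Step 2. We must produce, for every secondary Hopf surface of class 0, a holomorphic family degenerating to a quotient of a class 1 surface. Concretely, we need the finite group action to remain free and to commute with $F_t$ along the whole family. If the automorphisms $(z,w)\mapsto(\zeta z,\zeta' w)$ satisfy $\zeta=\zeta'^m$, they commute with $F_t$ for every $t$. So the first thing I would try is to read off from Hasegawa's list that, in class 0, the group can always be put in this diagonal form, possibly after the scaling $G_s$.
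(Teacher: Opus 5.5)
Your route is viable and genuinely different from the paper's, though the step you flag as the main obstacle is still unproved; it is closed below.

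\textbf{Comparison with the paper.} The paper makes no case split. It quotes Hasegawa's two proper families, which degenerate an arbitrary $X$ first to a linear model $X_\lin$ and then to a diagonal model $X_\diag=W/\Gamma_\diag$. It applies Lemma~\ref{Lemma:Finite quotient of Yang's metric} only to $X_\diag$, through the central subgroup $\langle D^M\rangle$, and then uses Lemma~\ref{Lemma:Openness in a proper holomorphic family} twice.

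You instead split according to the normal form of a central power $g=F^k$ of the contraction. Phrase the cases this way rather than via ``the primary cover''; that makes them exhaustive. When $g$ is diagonalizable, your Step 1 is exactly the paper's argument for $X_\diag$. You rightly note that it uses nothing about the finite group, so no deformation is needed at all. Only the resonant case $\lambda\neq0$ needs a degeneration, a single one with a single use of openness. It is the equivariant version of the proof of Proposition~\ref{Proposition:Hopf class 0}.

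What each buys: your route is more self-contained, using only normal forms of contractions and $\Gamma=H\rtimes\langle F\rangle$ with $F^k$ central, instead of Hasegawa's deformation theory as a black box. The paper's route is uniform and needs no case analysis.

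\textbf{Closing the flagged step.} The diagonal form of $H$ is the real content of Step 2, and you should prove it rather than hope to read it off a list. It follows from a short centralizer computation.

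Work in coordinates where $g(z,w)=(\beta^mz+\lambda w^m,\beta w)$ with $\lambda\neq0$ and $|\beta|\neq1$. Every $\gamma\in\Gamma$ extends to an automorphism of $\CC^2$ fixing $0$, and it commutes with $g$ because $g$ is central. Grade power series by $\mathrm{wt}(z)=m$, $\mathrm{wt}(w)=1$. Composition with $g$ preserves each weight-$d$ piece and is triangular there, with all eigenvalues equal to $\beta^d$.

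Solve $\gamma_2\circ g=\beta\gamma_2$, and then $\gamma_1\circ g=\beta^m\gamma_1+\lambda\gamma_2^m$, weight by weight, using $\lambda\neq0$. This gives $\gamma(z,w)=(c^mz+dw^m,cw)$. These maps form an abelian group, and its torsion elements are exactly the maps $(\epsilon^mz,\epsilon w)$ with $\epsilon$ a root of unity. Hence $\Gamma=H\times\langle F\rangle$ is abelian, $H$ has precisely the form you wanted, and $F=(c^mz+dw^m,cw)$ with $d\neq0$.

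Now set $F_t=(c^mz+tw^m,cw)$ and $\Gamma_t=H\times\langle F_t\rangle$. Freeness holds for two reasons. First, $H$ acts freely on $W$ because it is literally the same group as in $\Gamma$. Second, for $j\neq0$ one has $(hF_t^j)^{|H|}=F_t^{j|H|}$, which has no fixed point in $W$. So $(W\times\Delta)/(H\times\mathbb{Z})\to\Delta$ is the proper holomorphic submersion you need. Its central fibre is a free quotient of the class 1 surface $W/\langle F_0\rangle$, which Step 1 handles.

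\textbf{Two corrections to your write-up.} First, the commutation of $H$ with $F_t$ comes from this diagonal form, not from conjugating by $G_s$. The only role of $G_s$ is that $G_s\Gamma_tG_s^{-1}=\Gamma_{st}$, since $G_s$ commutes with $H$; this gives $X_t\simeq X$ for all $t\neq0$. Second, your first description of the family in Step 2, with the nonzero fibres as class 1 quotients, is backwards, as you yourself then notice.
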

\begin{proof}
    By Proposition~\ref{Proposition:Hopf class 1},~\ref{Proposition:Hopf class 0}, the conclusion holds if $X$ is a primary Hopf surface. 
    Let $X=W/\Gamma$ be a Hopf surface where $W=\CC^2\setminus\{0\}$ and $\Gamma$ is the covering transformation group of $X$.
    According to \cite[Theorem~2.1, Theorem~2.3, Corollary~2.4]{Has93}, we have the following properties:
    \begin{enumerate}
        \item $\Gamma$ can be expressed as a semi-direct product of an infinite cyclic
        subgroup $\langle F \rangle$ generated by a contraction and a finite normal subgroup $H$, that is, 
        $$\Gamma=H\rtimes\langle F \rangle,$$
        where $H$ is finite and normal and $F$ is a contraction.
        \item There exists an $m\in\NN^*$ such that $F^m$ belongs to the center of $\Gamma$.
    \end{enumerate}
    According to Hasegawa's deformation construction \cite[Theorems~3.3 and 3.7, Lemmas~3.4 and 3.6]{Has93} (restated explicitly in \cite[Section~3]{Has23}), we also have the following property:
    \begin{itemize}
        \item[(3)] $X$ can be successively degenerated into a linear model and a diagonal model through two proper holomorphic families.
    \end{itemize}
    More precisely, there are two complex analytic families such that
    \begin{itemize}
        \item[$\bullet$] the first has central fiber a linear general Hopf surface $X_{\lin}=W/L(\Gamma)$, while every nonzero fiber is biholomorphic to $X$;
        \item[$\bullet$] the second has central fiber a diagonal general Hopf surface $X_\diag=W/\Gamma_\diag$, while every nonzero fiber is biholomorphic to $X_\lin$.
    \end{itemize}
    
    The explicit definition of $L(\Gamma)$ and $\Gamma_\diag$ will not be used in our proof.
    So we omit the definition here and refer to \cite{Has93,Has23} for the definition and some basic properties.
    
    We now show the uniform RC-positivity of $X_\diag$. 
    By \cite{Has23}, 
    $$\Gamma_\diag= H_0\rtimes \langle D \rangle,$$
    where $H_0$ is finite and $D$ is a diagonal contraction.
    Since the conjugation by $D$ induces an automorphism of the finite group $H_0$, it has finite order.
    Therefore, there is $M>0$ such that
    $$D^MhD^{-M}=h \quad \text{for every } h\in H_0.$$
    Then the cyclic group $\langle D^M \rangle$ is a central infinite cyclic subgroup of finite index in $\Gamma_\diag$.
    Define 
    $$\widetilde{X}:=W/\langle D^M \rangle \quad \text{and} \quad G_0=\Gamma_\diag/\langle D^M \rangle.$$
    Then $G_0$ is finite, and we obtain a finite Galois cover
    $$p:\widetilde{X}\longrightarrow X_\diag=\widetilde{X}/G_0.$$
    It is \'etale since the induced finite-group action is free. 
    Actually, if the class of $\gamma\in\Gamma_\diag$ fixes the class $[z]\in\widetilde{X}$ for $z\in W$, then $\gamma z=D^{Ml}z$ for some $l$.
    Hence $D^{-Ml}\gamma$ fixes $z\in W$. 
    Thus it is the identity deck transformation.
    
    Replacing the contraction $D^M$ by its inverse and interchanging the two coordinates if necessary, we can write
    $$(D^M)^{-1}(z,w)=(az,bw) \quad \text{where } |a|\geq|b|>1.$$
    Therefore, $\widetilde{X}$ is a primary Hopf surface $H_{a,b}$ of class 1.
    For this special Hopf surface, by the computation of Proposition~\ref{Proposition:Hopf class 1} (\cite{Yan17}), we can apply Lemma~\ref{Lemma:Finite quotient of Yang's metric} to $\widetilde{X}$.
    Then we prove that the tangent bundle of $X_\diag$ is uniformly RC-positive.
    
    By applying Lemma~\ref{Lemma:Openness in a proper holomorphic family} first to Hasegawa's second family, 
    we know that every sufficiently small nonzero fiber is uniformly RC-positive.
    Since all nonzero fibers are biholomorphic to $X_\lin$, we obtain that $TX_\lin$ is uniformly
    RC-positive. 
    Similarly, by applying Lemma~\ref{Lemma:Openness in a proper holomorphic family} to Hasegawa's first family, we obtain that $TX$ is uniformly RC-positive.
    
    This completes the proof of Theorem~\ref{Theorem:Positivity for Hopf surfaces}.
    
\end{proof}

\subsection{A classification theorem for compact complex surfaces}

It is well-known that every compact complex surface can be obtained from a minimal surface by blow-ups.
We now prove the following classification theorem for compact complex surfaces. 

\begin{theorem}[{=Theorem~\ref{Main Thm 2}}] \label{Main Thm 2'}
    Let $X$ be a compact connected complex surface.
    \begin{enumerate}
        \item If $TX$ is uniformly RC-positive, then $\kappa(X)=-\infty$, and the minimal model of $X$ is either rational or of class VII.  
        An irrational ruled surface cannot occur.
        \item If $X$ is K\"ahler, then
        $TX$ is uniformly RC-positive if and only if $X$ is rational.
        \item Every Hopf surface has uniformly RC-positive tangent bundle.  
        The same holds after any finite sequence of point blow-ups.
        \item No Inoue-Bombieri surface, and no finite sequence of point blow-ups of one, has uniformly RC-positive tangent bundle.
        \item Among minimal class VII surfaces with $b_2=0$, the uniformly RC-positive ones are exactly the Hopf surfaces. 
    \end{enumerate}
\end{theorem}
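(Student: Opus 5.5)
I will prove the five items separately, using Enriques--Kodaira classification together with the obstructions in Propositions~\ref{Proposition:Basic property for uniformly RC-positive vector bundle} and \ref{Proposition:Yang's vanishing} and Theorem~\ref{Theorem:Characterization of RC-positive line bundle}.

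For (1), assume $TX$ is uniformly RC-positive. By Proposition~\ref{Proposition:Basic property for uniformly RC-positive vector bundle}(2), $\Lambda^2TX=-K_X$ is uniformly RC-positive. By Theorem~\ref{Theorem:Characterization of RC-positive line bundle}, $K_X$ is then not pseudo-effective, so $H^0(X,K_X^{\otimes m})=0$ for all $m\geq1$, and hence $\kappa(X)=-\infty$. By classification, the minimal model is then rational, ruled over a curve of genus $g\geq1$, or of class VII. To exclude the irrational ruled case, I note that $\kappa$, $b_1$, and $H^0(X,\Omega^1_X)$ are bimeromorphic invariants. If $X$ is bimeromorphic to a ruled surface over a curve $C$ with $g(C)\geq1$, the pullback of a nonzero holomorphic $1$-form on $C$ gives a nonzero section of $T^*X$. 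This contradicts Proposition~\ref{Proposition:Yang's vanishing} with $m=1$.

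For (2), if $X$ is Kähler and $TX$ is uniformly RC-positive, Yang's theorem shows $X$ is projective and rationally connected, hence rational as a surface. Conversely, a rational surface is rationally connected and Kähler, so Theorem~\ref{Main Thm 0'} applies. Item (3) follows from Theorem~\ref{Theorem:Positivity for Hopf surfaces}, with Theorem~\ref{Main Thm 1'} applied repeatedly to point blow-ups, since a point is a connected submanifold.

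Item (4) is the main obstacle; I need an explicit obstruction. An Inoue--Bombieri surface is a quotient of $\mathbb{H}\times\CC$ by an affine group. Its known structure (Inoue) gives a holomorphic line subbundle of $T^*X$, or of a tensor power of it, that is flat or pseudo-effective. Concretely, I would use $\frac{dw}{\operatorname{Im}w}$-type forms on the $\mathbb{H}$-factor: $dw$ transforms by a character, so $L=\mathcal{O}\cdot dw\subset T^*X$ has a flat Hermitian metric with $|dw|^2/(\operatorname{Im} w)^2$-type weights, or at least a nonnegatively curved one. Thus $L$ is pseudo-effective, contradicting Proposition~\ref{Proposition:Basic property for uniformly RC-positive vector bundle}(4). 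For blow-ups, the line subbundle $\pi^*L\subset \pi^*T^*X\subset T^*\widetilde X$ remains pseudo-effective, since pseudo-effectivity is preserved under pullback and $\pi^*T^*X\to T^*\widetilde X$ is injective as sheaves. Saturating to a line subbundle only adds an effective divisor, which keeps it pseudo-effective. The care needed here is in checking the invariant metric for each of the three Inoue families ($S_M$, $S^{\pm}$).

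For (5), I use Bogomolov--Li--Yau--Zheng--Teleman: minimal class VII surfaces with $b_2=0$ are Hopf or Inoue--Bombieri surfaces. Combining this with (3) and (4) gives the result.
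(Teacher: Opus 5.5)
Items (1), (2), (3) and (5) follow the paper's argument essentially step for step. For (4) you take a different route.

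The paper works with the canonical bundle. It cites \cite{AD08,Tel06} for a smooth semipositively curved metric on $K_X$, notes that $K_{\widetilde X}=\pi^*K_X\otimes\mathcal{O}(E)$ stays pseudo-effective, and concludes because $\Lambda^2T\widetilde X=K_{\widetilde X}^{-1}$ would have to be uniformly RC-positive.

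You instead use the conormal bundle $L=\mathcal{O}\,dw\subset T^*X$ of the invariant foliation $\{dw=0\}$. This works for all three Inoue families. In each of $S_M$, $S^{+}$, $S^{-}$ the generators act on the $\mathbb{H}$-factor by $w\mapsto\alpha w$ with $\alpha>1$, or by $w\mapsto w+a_i$ with $a_i\in\mathbb{R}$. Hence $dw/\operatorname{Im}w$ is invariant, and declaring it to have unit length gives a smooth metric on $L$ with $i\Theta_L=-i\partial\bar\partial\log(\operatorname{Im}w)^2=\frac{i\,dw\wedge d\bar w}{2(\operatorname{Im}w)^2}\geq 0$.

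The two routes trade off as follows. Yours is self-contained and gives a finer obstruction: a pseudo-effective line subbundle of $T^*X$ itself. The paper's is shorter and needs nothing extra for blow-ups, because $K_{\widetilde X}$ is an honest line bundle and $-K_{\widetilde X}$ is a top exterior power.

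Two points in your (4) need correcting.

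First, the ``flat Hermitian metric'' option is wrong. $L$ is flat with real character $\alpha\neq 1$. The curvature above is semipositive and nonzero, so $\deg_\omega L>0$ for every Gauduchon metric $\omega$, and $L$ admits no flat Hermitian metric. Only the semipositive metric works.

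Second, and more importantly, the blow-up step has a flaw. On a surface, saturation does not produce a line subbundle; it produces a line bundle that is a subbundle only off a finite set. Here $\pi^*L\subset T^*\widetilde X$ is already saturated. However, in the blow-up chart $w-w_0=s't'$, $z-z_0=t'$ one has $\pi^*dw=t'\,ds'+s'\,dt'$. This vanishes at the point of the exceptional curve corresponding to the leaf direction. So Proposition~\ref{Proposition:Basic property for uniformly RC-positive vector bundle}(4), as quoted for line subbundles, does not apply directly. You need its version for invertible subsheaves, which you can get from results already in the paper:
\begin{itemize}
\item Uniform RC-positivity implies mean curvature positivity \cite[Theorem 3.6]{Yan18}.
\item By Theorem~\ref{Theorem:Mean curvature positivity and HN-positivity}, there is then a Gauduchon metric $\omega$ with $\mu_L(T\widetilde X,\omega)>0$.
\item The image of the dual map $T\widetilde X\to(\pi^*L)^*$ is $(\pi^*L)^*\otimes\mathcal{I}_Z$ with $Z$ finite, so $\deg_\omega(\pi^*L)^*>0$.
\item This contradicts $\deg_\omega\pi^*L\geq 0$, which holds for the pulled-back semipositive metric.
\end{itemize}
Alternatively, handle the blow-ups through the canonical bundle, as the paper does.
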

\begin{proof}
    (1) If $TX$ is uniformly RC-positive, we have 
    $$H^0(X,\Omega^p_X)=0 \quad \text{for } p=1,2,$$
    and $K_X$ is not pseudo-effective by Proposition~\ref{Proposition:Basic property for uniformly RC-positive vector bundle}, \ref{Proposition:Yang's vanishing}.
    In particular, $\kappa(X)=-\infty$.
    By the Enriques--Kodaira classification \cite{BPV84}, a minimal compact complex surface with $\kappa=-\infty$ is rational, is a ruled surface of genus at least 1, or is of class VII.
    
    However, a ruled surface over a curve $C$ of genus $g(C)\geq 1$ carries the pull-backs of nonzero forms in $H^0(C,K_C)$. 
    Since holomorphic one-forms are unchanged by point blow-ups of smooth surfaces, every surface bimeromorphic to such a ruled surface carries a holomorphic one-form.
    Thus the minimal model of $X$ is either rational or of class VII.
    
    (2) If $X$ is K\"ahler, $X$ is rationally connected if and only if $TX$ is uniformly RC-positive by Theorem~\ref{Main Thm 0}.
    
    (3) This is an immediate consequence of Theorems~\ref{Theorem:Positivity for Hopf surfaces} and \ref{Main Thm 1}.
    
    (4) For every Inoue-Bombieri surface $X$, the canonical bundle $K_X$ has a smooth Hermitian metric with semi-positive curvature (see \cite[Lemma~3]{AD08} and \cite[Remark~4.2]{Tel06}). 
    In particular, $K_X$ is pseudo-effective.
    
    If $\pi:\widetilde{X}\longrightarrow X$ is a point blow-up and $E$ is the exceptional divisor, then by the formula
    $$K_{\widetilde{X}}=\pi^*K_X\otimes\mathcal{O}(E),$$
    we know that $K_{\widetilde{X}}$ is pseudo-effective (see also \cite[Remark 4.2]{Tel06}).
    Therefore, by Proposition~\ref{Proposition:Basic property for uniformly RC-positive vector bundle}, $TX$ and $T\widetilde{X}$ are not uniformly RC-positive.
    
    (5) According to the work of Bogomolov, Li, Yau, Zheng and Teleman (\cite{Bog76,Bog82,LY87,LYZ90,Tel94}), minimal surfaces of class VII with $b_2=0$ are precisely Hopf surfaces and Inoue--Bombieri surfaces.
    Thus the conclusion follows from (3) and (4).
    
\end{proof}

\section{Surfaces of class $\VII_0^+$} \label{Sec:Classification 2}

In this section, we consider minimal surfaces of class $\VII_0^+$.

Here is a simple observation on the positivity of anticanonical bundles.
Teleman \cite[Proposition 4.5]{Tel06} proved that on every minimal class $\VII$ surface $X$ with $b_2(X)>0$, the Bott--Chern class $c_1^\BC(K_X)$ is not pseudo-effective.
Then by Yang's result (Theorem~\ref{Theorem:Characterization of RC-positive line bundle}), $-K_X$ is (uniformly) RC-positive.

A Kato surface $X$, introduced by Kato \cite{Kat77}, is a minimal compact complex surface with positive second Betti number $b_2(X)>0$ containing a global spherical shell (GSS),
that is, there exists an open set $U\subset X$ such that $X\setminus U$ is connected and $U$ is biholomorphic to an open neighborhood of the sphere $S^3$ in $\CC^2\setminus\{0\}$.
The GSS conjecture predicts that a surface of class $\VII_0^+$ is a Kato surface. 
For a recent result on the GSS conjecture under an additional foliation hypothesis, see \cite{KS26}, which proves that minimal class $\VII$ surfaces with $b_2(X)=3$ and two distinct holomorphic foliations are Kato surfaces.

Hence we mainly consider the positivity of Kato surfaces in this section.

\subsection{Uniform RC-positivity}
In this subsection, we prove that the tangent bundle of any Kato surface is uniformly RC-positive. 

Dloussky and Oeljeklaus \cite[Main Theorem in page 1504, Theorem~3.1 in page 1529]{DO99} showed that 
for any Kato surface $X$, there exists a singular holomorphic foliation $\mathcal{F}$ with a tangent line bundle $T\mathcal{F}$ (rather than merely a rank-one torsion-free sheaf) and an exact sequence
\begin{equation}
    0\rightarrow T\mathcal{F}\rightarrow TX\rightarrow N\mathcal{F}\otimes \mathcal{I}_Z\rightarrow 0,
\end{equation}
where $N\mathcal{F}:=(TX/T\mathcal{F})^{**}$ is the normal bundle and $Z$ is a zero-dimensional complex space.
Motivated by this structure, we first prove the following lemma.
\begin{lemma}\label{Lemma:Uniformly RC-positive for Kato surfaces}
    Let $X$ be a compact complex surface and let $E$ be a holomorphic vector bundle of rank two.
    Assume that there is an exact sequence 
    \begin{equation}
        0\rightarrow L_1\rightarrow E\rightarrow L_2\otimes \mathcal{I}_Z\rightarrow 0,
    \end{equation}
    where $L_1,L_2$ are line bundles and $Z$ is a zero-dimensional complex space.
    If there exists a Gauduchon metric $\omega$ on $X$ such that 
    $$\deg_\omega L_1>0 \quad \text{and} \quad \deg_\omega L_2>0,$$
    then $E$ is uniformly RC-positive.
\end{lemma}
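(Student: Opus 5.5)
The plan is to build a Hermitian metric on $E$ that is adapted to the extension away from $Z$, and then to fix the finitely many points of $Z$ by a conformal change $e^{-\varphi}$. The change is controlled through Theorem~\ref{Theorem:Solve equations}, in the same way as in the proof of Lemma~\ref{Lemma:Construct uniform RC-positive metric}.

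\emph{Step 1 (line bundle metrics).} Set $c_i:=\deg_\omega L_i>0$. Fix smooth metrics $h_i$ on $L_i$ and apply Theorem~\ref{Theorem:Solve equations} with respect to the Gauduchon metric $\omega$. After rescaling by a conformal factor, this gives metrics, still denoted $h_i$, satisfying
$$i\Theta_{L_i,h_i}\wedge\omega=\kappa_i\,\frac{\omega^2}{2}\quad\text{with }\kappa_i>0 \text{ constant}.$$
In other words, $\tr_\omega i\Theta_{L_i,h_i}=\kappa_i>0$ at every point.

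\emph{Step 2 (common positive direction).} This is the surface-specific linear algebra. At a point $x$, identify the $\omega$-unit directions in $T_xX$ with $\mathbb{P}(T_xX)\cong S^2$, equipped with the round measure. For a real $(1,1)$-form $\alpha$ one has
$$\alpha(u,\bar u)=\tfrac12\tr_\omega\alpha+\langle a,p(u)\rangle,$$
where $p(u)\in S^2$ and $a\in\RR^3$ corresponds to the trace-free part of $\alpha$. Hence $\{\alpha>0\}$ is an open spherical cap strictly larger than a hemisphere whenever $\tr_\omega\alpha>0$. The complements of two such caps are closed caps strictly smaller than hemispheres, so they cannot cover $S^2$. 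Therefore some unit $u$ satisfies $i\Theta_{L_1}(u,\bar u)>0$ and $i\Theta_{L_2}(u,\bar u)>0$ at the same time. Compactness makes this uniform: there is $\delta>0$ such that at every point some $\omega$-unit $u$ gives both values $\geq\delta$.

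\emph{Step 3 (metric on $E$ away from $Z$).} Off $Z$, the sequence is an extension of line bundles, $E\cong L_1\oplus L_2$ as $C^\infty$ bundles. Take the metric $h_1\oplus s\,h_2$ and rescale the second fundamental form. The standard argument is that scaling the extension class by $s\to\infty$ (equivalently, the metric by $s$) makes $\beta$ of size $O(s^{-1/2})$. By (\ref{Formula:curvature of subbundle}), the curvature of $E$ in direction $u$ then equals $\diag(i\Theta_1(u,\bar u),i\Theta_2(u,\bar u))+O(s^{-1/2})$, with cross terms also small. With the $u$ from Step 2 this is $\geq\frac{\delta}{2}$ on the compact set $X\setminus W$, where $W$ is a small neighbourhood of $Z$.

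\emph{Step 4 (the points of $Z$, the main obstacle).} On $W$ the quotient is not a line bundle, so the splitting degenerates and $\beta$ blows up. Here I would first glue the metric of Step 3 to an arbitrary smooth metric on $E|_W$ with a cutoff. I would then repair positivity by a conformal change $e^{-\varphi}$, choosing $\varphi$ so that $\ddbar\varphi$ is large on $W$ (for instance $A|z|^2$ near each point of $Z$), in the manner of Region I of the blow-up proof.

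The difficulty is the transition annulus, where $\varphi$ also has negative Hessian directions. I would handle it as in the proof of Lemma~\ref{Lemma:Construct uniform RC-positive metric}. Use Theorem~\ref{Theorem:Solve equations} to prescribe $\ddbar\phi\wedge\omega=f\,\omega^2/2$, with $f$ large positive near $Z$ and slightly negative elsewhere, compensated by the positive margin $\delta/2$ from Steps 1--3. Then redo the argument with the function $M(x)=\sup_u\lambda_H(x;u)$, which forces $M>0$ everywhere. The delicate point is making the averaging argument work pointwise on the annulus, since there the direction $u$ is not controlled by Step 2. I expect this gluing and averaging step to require the most care.
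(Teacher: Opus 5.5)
\textbf{Steps 1--3 are sound; the gap is in Step~4.} Steps 1--3 agree with the paper. Solving the Gauduchon equation directly in Step~1 is a more elementary substitute for the paper's appeal to Li--Zhang--Zhang. The cap argument in Step~2 is the paper's volume argument. The metric $h_1\oplus s\,h_2$ in Step~3 is, up to a constant factor, the paper's conjugated metric with $s=\varepsilon^{-2}$.

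Step~4 does not close. Solving $\ddbar\phi\wedge\omega=f\,\omega^2/2$ controls only $\tr_\omega\ddbar\phi=f$. It says nothing about $\ddbar\phi(u,\bar u)$ in the particular direction $u$ supplied by Step~2. The form $\ddbar\phi$ can have large eigenvalues of both signs while $f$ is only slightly negative, so "compensated by the margin $\delta/2$" does not follow.

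The $M(x)$ argument of Lemma~\ref{Lemma:Construct uniform RC-positive metric} does not transfer either. It works there only because $\Omega_0$ is a closed positive form built from directions in which $\lambda_h$ is already positive, namely tangents to the very free curves. With $\omega$ in its place, averaging $\ddbar\phi(u,\bar u)\le M(x)|u|_\omega^2-\lambda_h(x;u)$ over the $\omega$-unit sphere only gives
\[
M(x)\ \ge\ \tfrac12 f(x)+\operatorname{avg}_{|u|_\omega=1}\lambda_h(x;u).
\]
For the split metric, $\lambda_h(x;u)=\min\{i\Theta_{L_1,h_1}(u,\bar u),\,i\Theta_{L_2,h_2}(u,\bar u)\}$, and this average can be negative although both traces are positive. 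Suppose that at a point, in an $\omega$-orthonormal basis, the two curvatures are $\diag(2,-1)$ and $\diag(-1,2)$. Then the average of the minimum is $-\tfrac14$, while $u=(1,1)/\sqrt2$ gives $\tfrac12$ for both. So nothing forces $M>0$ away from $Z$, and you do not have a complete argument.

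\textbf{The missing idea.} Stay local, and use all of the blow-up proof of Theorem~\ref{Main Thm 1'}, not just Region~I. Near each point of $Z$ take $\rho=|z|^2$, so that $c_0\,\omega\le\ddbar\rho\le C_0\,\omega$. Set $F=f(\rho)$ with $f'(\tau)=\lambda(\tau)/\tau$, where $\lambda$ is the profile of Lemma~\ref{Lemma:auxiliary function 2}. Then
\[
\ddbar F=\frac{\lambda(\rho)}{\rho}\Bigl(\ddbar\rho-\frac{\partial\rho\otimes\bar\partial\rho}{\rho}\Bigr)+\frac{\lambda'(\rho)}{\rho}\,\partial\rho\otimes\bar\partial\rho ,
\]
and the bracket is $\ge0$ by Cauchy--Schwarz. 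The three zones are handled as follows.
\begin{enumerate}
\item On $\{\rho\le a\}$ we have $F=A\rho$, which beats the arbitrary glued-in metric in every direction.
\item On the annulus $a<\rho<2a$, $\lambda'$ is very negative. Do not use the direction from Step~2 here. Use instead an angular direction $u\neq0$ with $\partial\rho(u)=0$, which exists because $\dim X=2$. Then $\ddbar F(u,\bar u)=\frac{\lambda}{\rho}\ddbar\rho(u,\bar u)\ge Mc_0|u|_\omega^2$, where $M$ is the constant of Lemma~\ref{Lemma:auxiliary function 2}. Choosing $Mc_0$ larger than a uniform all-direction lower bound for the curvature of the Step~3 metric makes this positive.
\item On $2a\le\rho\le Sa$, the slow decay $\lambda'\ge-\eta$ gives $\ddbar F\ge-\eta\,\frac{\partial\rho\otimes\bar\partial\rho}{\rho}\ge-\eta C_0\,\omega>-\frac{\delta}{4}\,\omega$. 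So the Step~2 direction keeps a positive margin.
\end{enumerate}
Choose the constants in this order: $M$ and $\eta$ first, then $a$, then the scaling parameter $s$ on the compact set $\{\rho\ge a\}$, and finally $A$ after gluing. No global PDE is needed.
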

\begin{proof}
    
    \medskip
    \noindent\textbf{Step 1: construct a metric on $L_1\oplus L_2$.}
    
    Since each $L_i, i=1,2$ is a line bundle with $\deg_\omega L_i>0$, we have $\mu_L(L_i,\omega)>0$ (see the definition before Theorem~\ref{Theorem:Mean curvature positivity and HN-positivity}).
    Then by Theorem~\ref{Theorem:Mean curvature positivity and HN-positivity}, there is a Hermitian metric $h_i$ on $L_i$ such that
    $\tr_\omega R^{h_i}>0$ (see also \cite[proof of Theorem~1.5]{Yan19}). 
    
    Let $\lambda_a$ and $\lambda_b$ be two eigenvalues of $R^{h_1}$ at some point $x\in X$. 
    Then we have $\lambda_a+\lambda_b>0$.
    If $\lambda_a>0$ and $\lambda_b\geq0$, then for any tangent vector $u$ in the eigenspace associated with $\lambda_b$, we have $R^{h_1}_x(u,\bar{u})\geq0$. 
    The equality holds only if $\lambda_b=0$ and $u$ is in the eigenspace associated with $\lambda_b$.
    If $\lambda_a>0$ and $\lambda_b<0$, we take an orthogonal basis $e_a$ and $e_b$ associated with $\lambda_a$ and $\lambda_b$.
    Take $u=u_ae_a+u_be_b$ with $|u_a|^2+|u_b|^2=1$.
    Then we have 
    $$R^{h_1}_x(u,\bar{u})>0 \Longleftrightarrow |u_a|^2>\frac{-\lambda_b}{\lambda_a-\lambda_b},
    \quad \text{where} \quad \frac{-\lambda_b}{\lambda_a-\lambda_b}\in (0,\frac{1}{2}).$$
    The same conclusion holds for $R^{h_2}$.
    Since 
    $$2\Vol\{|u_a|^2+|u_b|^2=1~|~|u_a|^2>c \text{ for some constant } c<\frac{1}{2}\}>
    \Vol\{|u_a|^2+|u_b|^2=1\},$$
    there is a nonzero tangent vector $u_x$ such that 
    $$R^{h_1}_x(u_x,\bar{u}_x)>0 \quad \text{and} \quad R^{h_2}_x(u_x,\bar{u}_x)>0.$$
    Then by continuity and compactness, 
    $$\delta:=\inf_X\sup_{|u|_\omega=1}\inf\{R^{h_1}_x(u,\bar{u}),R^{h_2}_x(u,\bar{u})\}>0.$$
    This implies that the vector bundle 
    $$(L_1\oplus L_2,h_1\oplus h_2)$$
    is uniformly RC-positive.
    
    \medskip
    \noindent\textbf{Step 2: construct a metric on $E$ over $X\setminus Z$.}
    
    By the exact sequence, we have a smooth (not holomorphic) isomorphism on $U:=X\setminus Z$:
    $$E|_U\simeq L_1|_U\oplus L_2|_U.$$
    Let $\bar\partial_E,\bar\partial_{L_1},\bar\partial_{L_2}$ be complex structures on $E,L_1,L_2$ respectively.
    Then in terms of the second fundamental form $\beta$, we can write 
    \begin{equation*}
        \bar\partial_E=
        \begin{pmatrix}
            \bar\partial_{L_1} & -\beta^* \\
            0 & \bar\partial_{L_2}
        \end{pmatrix}.
    \end{equation*}
    Put $G_\varepsilon:=\diag(\varepsilon,1)$ as a smooth linear automorphism on $L_1\oplus L_2$ for every $\varepsilon>0$ and define a new complex structure on $L_1\oplus L_2$ by 
    \begin{equation*}
        \bar\partial_\varepsilon:=G_\varepsilon\bar\partial_E G_\varepsilon^{-1}=
        \begin{pmatrix}
            \bar\partial_{L_1} & -\varepsilon \beta^* \\
            0 & \bar\partial_{L_2}
        \end{pmatrix}.
    \end{equation*}
    Then $\bar\partial_\varepsilon\circ G_\varepsilon=G_\varepsilon\circ\bar\partial_E$.
    This means that $G_\varepsilon$ is a holomorphic bundle isomorphism from $(E,\bar\partial_E)$ to $(L_1\oplus L_2,\bar\partial_\varepsilon)$.
    
    Equip $(L_1\oplus L_2,\bar\partial_\varepsilon)$ with the fixed split metric $h_0:=h_1\oplus h_2$.
    Denote by $D_\varepsilon$ and $R_\varepsilon$ the Chern connection and Chern curvature associated with $(L_1\oplus L_2,\bar\partial_\varepsilon,h_0)$.
    Denote $D_0$ and $R_0$ the Chern connection and Chern curvature associated with $(L_1\oplus L_2,\bar\partial_0,h_0)$, where $\bar\partial_0:=\bar\partial_{L_1}\oplus\bar\partial_{L_2}$.
    Then on every compact subset $K\Subset U$, we have
    $$D_\varepsilon=D_0+A_\varepsilon \quad \text{with} \quad
    A_\varepsilon=O(\varepsilon)\rightarrow 0 \text{ in } C^\infty(K),$$
    and 
    \begin{equation}
        R_\varepsilon=R_0+d^{D_0}A_\varepsilon+A_\varepsilon\wedge A_\varepsilon\rightarrow R_0 \text{ in } C^\infty(K).    
    \end{equation}
    Define $h_\varepsilon=G_\varepsilon^*h_0$ on $E|_U$.
    Then for any $x\in U, u\in T_xX$ and $v\in E_x$, we have 
    $$R^{h_\varepsilon}_x(u,\bar{u},v,\bar{v})=R_{\varepsilon,x}(u,\bar{u},G_\varepsilon v,\bar{G_\varepsilon v}).$$
    Then on every compact subset $K\Subset U$, there is $\varepsilon>0$ such that 
    $$\inf_K\sup_{|u|_\omega=1}\inf_{|v|_{h_\varepsilon}=1}R^{h_\varepsilon}_x(u,\bar{u},v,\bar{v})\geq\frac{\delta}{2}.$$
    
    \medskip
    \noindent\textbf{Step 3: construct a metric on $E$ over $X$.}
    
    We adapt the conformal trick used in the proof of Theorem~\ref{Main Thm 1}.
    
    Choose sufficiently small disjoint holomorphic coordinate balls $(B_j;(z_j^1,z_j^2))$ around the finitely many points of $Z$.
    Set $\rho_j=|z_j^1|^2+|z_j^2|^2$.
    After shrinking the balls, we may assume 
    $c_0\omega\leq\ddbar\rho_j\leq C_0\omega$ on $\bigcup_j\overline{B_j}$ for some positive constants $C_0>c_0>0$.
    Choose a constant $B_0>0$ such that, at every point $x$, for every $\omega$-unit base vector $u$ and every $v\in (L_1\oplus L_2)_x$, 
    $$R^{h_0}_x(u,\bar{u},v,\bar{v})\geq-B_0|v|^2_{h_0}.$$
    
    By applying Lemma~\ref{Lemma:auxiliary function 2}, for the given constants 
    $$M>\frac{B_0+1}{c_0} \quad \text{and} \quad 0<\eta<\frac{\delta}{4C_0},$$
    there exist constants $S>2$ and $L>0$ satisfying the requirement in Lemma~\ref{Lemma:auxiliary function 2}.
    Choose a small constant $a>0$ such that $\{\rho_j<Sa\}\Subset B_j$.

    Consider the compact set $K:=X\setminus\bigcup_j\{\rho_j<a\}\Subset U$.
    By Step 2, there is $\varepsilon>0$ such that 
    \begin{itemize}
        \item[$\bullet$] at each point $x\in K$, there is an $\omega$-unit base vector $u$ such that
        $$R^{h_\varepsilon}(u,\bar{u},v,\bar{v})\geq\frac{\delta}{2}|v|^2_{h_\varepsilon} \quad \text{for every } v\in E_x.$$
        \item[$\bullet$] at each point $x\in K$, for every $\omega$-unit base vector $u$,
        $$R^{h_\varepsilon}(u,\bar{u},v,\bar{v})\geq-(B_0+1)|v|^2_{h_\varepsilon} \quad \text{for every } v\in E_x.$$
    \end{itemize}
    Let $k$ be a global smooth metric on $E$ over $X$. 
    Take a cut-off function 
    $\chi\in C^\infty([0,\infty),[0,1])$ satisfying
    $$\chi(s)=1\quad(s\le a/3) \quad \text{and} \quad \chi(s)=0\quad(s\ge2a/3).$$
    Then the function $\chi(\rho_j)$ on $B_j$ vanishes on a neighborhood of its boundary.
    Extend it by zero to a global smooth function
    $\widetilde\chi$ on $X$, and set
    $$\widetilde{h}:=\widetilde\chi k+(1-\widetilde\chi)h_\varepsilon.$$
    Then $\widetilde{h}$ is a smooth metric on $E$ over $X$ and 
    $$\widetilde{h}=h_\varepsilon \quad \text{on} \quad X\setminus\bigcup_j\{\rho_j<2a/3\}.$$
    
    Consider the compact set $\bigcup_j\{\rho_j\leq a\}.$
    There is a constant $B_a>0$ such that at each point $x\in\bigcup_j\{\rho_j\leq a\}$, for every $\omega$-unit base vector $u$,
    $$R^{\widetilde{h}}(u,\bar{u},v,\bar{v})\geq-(B_a+1)|v|^2_{\widetilde{h}} \quad \text{for every } v\in E_x.$$
    Now choose a constant 
    $$A>\max\{M+1,\frac{B_a+1}{c_0}\}.$$
    Then for constants $A$ and $a$, there is a smooth function $\lambda$ satisfying Lemma~\ref{Lemma:auxiliary function 2}(1)--(5).
    Define a smooth function
    $$f(\tau):=\int_{0}^{\tau}\frac{\lambda(s)}{s}ds.$$
    Note that $\lambda=0$ on a neighborhood of $[Sa,\infty)$. 
    Thus $f$ is constant there.
    This implies that 
    $f(\rho_j)=f(Sa)$ is constant on a neighborhood of the boundary of $B_j$ since $\{\rho_j<Sa\}\Subset B_j$.
    Then we define
    \begin{equation*}
        F:=
        \begin{cases}
            f(\rho_j),&\text{on } B_j,\\
            f(Sa),&\text{on }X\setminus\bigcup_jB_j.
        \end{cases}
    \end{equation*}
    The argument above shows that $F$ is a globally defined smooth function on $X$.
    Define a new Hermitian metric on $E$ by 
    $$H=e^{-F}\widetilde{h}.$$
    Then by the same argument as in Step 3 in the proof of Theorem~\ref{Main Thm 1}, one can see that $H$ is uniformly RC-positive.
    
    This completes the proof of Lemma~\ref{Lemma:Uniformly RC-positive for Kato surfaces}.
    
\end{proof}

Now we can prove the positivity of Kato surfaces.
\begin{theorem}\label{Main Thm 3(2)}
    Let $X$ be a Kato surface. 
    Then $TX$ is uniformly RC-positive.
\end{theorem}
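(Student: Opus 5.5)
The plan is to apply Lemma~\ref{Lemma:Uniformly RC-positive for Kato surfaces} with $E=TX$, $L_1=T\mathcal{F}$ and $L_2=N\mathcal{F}$. We use the Dloussky--Oeljeklaus foliation on the Kato surface $X$, for which $T\mathcal{F}$ is a genuine line bundle and
$$0\rightarrow T\mathcal{F}\rightarrow TX\rightarrow N\mathcal{F}\otimes\mathcal{I}_Z\rightarrow 0$$
is exact. Everything then reduces to producing one Gauduchon metric $\omega$ with $\deg_\omega T\mathcal{F}>0$ and $\deg_\omega N\mathcal{F}>0$. Taking determinants (the ideal $\mathcal{I}_Z$ has codimension two, so it does not affect $c_1$) gives $-K_X=T\mathcal{F}\otimes N\mathcal{F}$, hence
$$\deg_\omega T\mathcal{F}+\deg_\omega N\mathcal{F}=\deg_\omega(-K_X).$$
By Teleman's result recalled at the start of this section, $c_1^{\BC}(K_X)$ is not pseudo-effective. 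By Theorem~\ref{Theorem:Characterization of RC-positive line bundle} this gives a Gauduchon metric with $\deg(-K_X)>0$. So the sum is positive for suitable $\omega$, and the remaining task is to make both summands positive simultaneously.

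The key step is to show that neither $T\mathcal{F}^{*}$ nor $N\mathcal{F}^{*}$ is pseudo-effective, and then to get a common Gauduchon metric. For the first part:

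\emph{$T\mathcal{F}^{*}$.} It is a line subbundle of $\Omega^1_X$. If it were pseudo-effective, then on a class VII surface with $b_1=1$ one should be able to use the structure of Kato surfaces (holomorphic sections twisted by flat bundles, as in Dloussky--Oeljeklaus) to rule this out.

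\emph{$N\mathcal{F}^{*}$.} It is the conormal bundle, and for a Kato foliation the degree is expected to be governed by the negative curves and the twisting flat line bundle.

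Concretely, I would compute $c_1^{\BC}(T\mathcal{F})$ and $c_1^{\BC}(N\mathcal{F})$ from the explicit description in \cite{DO99}. There $T\mathcal{F}$ and $N\mathcal{F}$ are, up to flat line bundles $L^\lambda$, combinations of the divisors of the rational curves in $X$. One then checks that both lie in the interior of the cone dual to Gauduchon classes, i.e.\ that they pair positively against every Gauduchon metric normalized appropriately, or at least against one common one.

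To obtain a single $\omega$ for both line bundles, I would use the convexity of the cone of Gauduchon classes $\{\omega^{n-1}\}$. By Theorem~\ref{Theorem:Characterization of RC-positive line bundle}, the set of Gauduchon metrics making $L_i$ positive is a nonempty open convex cone $\mathcal{C}_i$ in the space of $\partial\bar\partial$-closed positive $(1,1)$-forms (here $n=2$). The goal is to show $\mathcal{C}_1\cap\mathcal{C}_2\neq\emptyset$. The natural route is a Hahn--Banach/duality argument. If the two cones were disjoint, a separating current would express a nontrivial nonnegative combination $s\,c_1(T\mathcal{F}^{*})+t\,c_1(N\mathcal{F}^{*})$ as pseudo-effective in Bott--Chern cohomology. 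Through the identity above, that combination should then contradict either Teleman's non-pseudo-effectivity of $K_X$ or the individual non-pseudo-effectivity of the two duals.

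I expect this simultaneous-positivity step to be the main obstacle, since the duality only excludes pseudo-effective combinations and requires case analysis on $s/t$. If it fails in general, I would fall back on the explicit Dloussky--Oeljeklaus degree computation. Once $\omega$ is found, Lemma~\ref{Lemma:Uniformly RC-positive for Kato surfaces} immediately yields that $TX$ is uniformly RC-positive.
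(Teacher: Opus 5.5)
Your reduction matches the paper's: apply Lemma~\ref{Lemma:Uniformly RC-positive for Kato surfaces} to the Dloussky--Oeljeklaus sequence and look for one Gauduchon metric with $\deg_\omega T\mathcal{F}>0$ and $\deg_\omega N\mathcal{F}>0$. But the existence of that metric is the whole content of the theorem, and your proposal does not establish it.

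First, the non-pseudo-effectivity of $T\mathcal{F}^*$ and $N\mathcal{F}^*$ is only asserted (``should be able to'', ``is expected to be''). It also depends on which foliation you take. On an Inoue--Hirzebruch surface the paper has to use the foliation of the $\lambda_1$-eigenform $\theta_1$, so that $T\mathcal{F}\simeq L_{\lambda_2}$. With $\theta_2$ the roles of $\lambda_1,\lambda_2$ swap, and the same computation would give $T\mathcal{F}\simeq L_{\lambda_1}$, which has negative degree for every Gauduchon metric.

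Second, your Hahn--Banach step does not close with the information you list. Separation gives $s,t\ge0$, not both zero, with $s\,c_1^{\BC}(T\mathcal{F}^*)+t\,c_1^{\BC}(N\mathcal{F}^*)$ pseudo-effective. The pseudo-effective cone is merely convex, so it may meet the open cone spanned by these two classes in a sector avoiding the rays $s=0$, $t=0$ and $s=t$. Non-pseudo-effectivity of $T\mathcal{F}^*$, $N\mathcal{F}^*$ and $K_X$ therefore yields no contradiction.

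The missing input is quantitative. The paper takes the Baum--Bott and Camacho--Sad identities from \cite{DO99}. With $a=c_1(T\mathcal{F})$, $b=c_1(N\mathcal{F})$, $k=c_1(K_X)$ and $2n\le\sigma\le3n$, one has $a^2=\sigma-3n$, $b^2=2n-\sigma$ and $a\cdot b=0$. Hence $k\cdot a=3n-\sigma\ge0$ and $k\cdot b=\sigma-2n\ge0$.

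Next it uses Teleman's description of the Gauduchon cone and Buchdahl's criterion, together with $K_X\cdot C\ge0$ for every curve with $C^2<0$ (which follows from minimality). These show that $h_{t,s}=[\omega_0]+t\,c_1^{\BC}(K_X)+s\gamma_0$ is a Gauduchon class for $t\ge0$ and $s\gg1$, with $\deg_{h_{t,s}}L=\deg_{\omega_0}L+t\,k\cdot c_1(L)$.

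\begin{itemize}
\item When $2n<\sigma<3n$, both slopes are positive and $t\gg1$ suffices.
\item For Enoki surfaces ($\sigma=2n$), one slope vanishes. The paper identifies $N\mathcal{F}\simeq\mathcal{O}_X(D)$ with $D$ an effective cycle, so $\deg_\omega N\mathcal{F}=\int_D\omega>0$ for every Gauduchon $\omega$.
\item For Inoue--Hirzebruch surfaces ($\sigma=3n$), the other slope vanishes. Here $\deg_\omega T\mathcal{F}=-\deg_\omega L_{\lambda_1}>0$ for every Gauduchon $\omega$.
\end{itemize}

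Your fallback (``compute from the explicit description in \cite{DO99}'') points in this direction but supplies none of these facts. Without them, the common metric is not produced.
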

\begin{proof}
    Consider the singular holomorphic foliation $\mathcal{F}$ constructed by Dloussky--Oeljeklaus in \cite{DO99}.
    Let $T\mathcal{F}$ be the tangent line bundle of $\mathcal{F}$. 
    Then we have an exact sequence
    \begin{equation}\label{eq:Exact sequence of TF}
        0\rightarrow T\mathcal{F}\rightarrow TX\rightarrow N\mathcal{F}\otimes \mathcal{I}_Z\rightarrow 0,
    \end{equation}
    where $N\mathcal{F}$ is the normal bundle and $Z$ is a zero-dimensional complex space.
    (We briefly explain the argument in Lemma~\ref{Lemma:The endpoint foliations}.)
    By Lemma~\ref{Lemma:Uniformly RC-positive for Kato surfaces}, it is sufficient to find a Gauduchon metric such that 
    $$\deg_\omega T\mathcal{F}>0 \quad \text{and} \quad \deg_\omega N\mathcal{F}>0.$$
    
    \medskip
    \noindent\textbf{Step 1: recall some known results and constructions.}
    
    A surface $X$ with a GSS contains exactly $n:=b_2(X)>0$ rational curves $D_1,\ldots,D_n$, each of which is smooth or has a double point. 
    Dloussky and Oeljeklaus \cite[Section 0, page 1504]{DO99} defined 
    $$\sigma:=-\sum_{i=1}^{n}D_i^2+2\#\{\text{double points}\} \quad \text{satisfying} \quad 2n\leq\sigma\leq 3n.$$
    Write
    $$a=c_1(T\mathcal{F}), \quad b=c_1(N\mathcal{F}), \quad \text{and} \quad k=c_1(K_X)=-(a+b).$$
    Then by Baum--Bott formulas and Camacho--Sad formula, we have (see \cite[Section 3, formulas (4)--(6)]{DO99})
    \begin{equation}
        a^2=\sigma-3n\leq0, \quad b^2=2n-\sigma\leq0, \quad \text{and} \quad a\cdot b=0.
    \end{equation}
    This implies that 
    \begin{equation}\label{eq:computation of ka and kb}
        k\cdot a=3n-\sigma\geq0, \quad \text{and} \quad k\cdot b=\sigma-2n\geq0.
    \end{equation}
    
    We now recall Teleman's result on the Gauduchon cone which will be used later (see \cite[Remark~2.4, Proposition~2.5, Theorem~2.6]{Tel06}).
    Note that $b_1(X)=1$ is odd.
    The space of Gauduchon metrics on $X$ is connected, and the orientation of the ``exact line''
    $$\Gamma(X)\subset H^{1,1}_{\BC}(X,\RR)$$
    induced by $\langle \cdot,\omega_g\rangle$, where $\omega_g$ is any Gauduchon metric, is well defined. 
    Let $\gamma_0$ be a positive generator of this line. One has
    $$\langle \gamma_0,[\omega_g]\rangle>0$$
    for every Gauduchon metric $\omega_g$ on $X$.
    The Bott-Chern class $\gamma_0$ is represented by an exact positive current.
    In particular, the de Rham class of $\gamma_0$ is zero, and by Stokes' formula, 
    $$\gamma_0^2=0 \quad \text{and} \quad \gamma_0\cdot c_1^{\BC}(L)=0\quad \text{ for any holomorphic line bundle } L.$$
    Then by Buchdahl's ampleness criterion,
    a class $h\in \mathcal{H}(X)$ is represented by a Gauduchon metric if and only if
    \begin{equation}\label{eq:Gauduchon criterion}
        h^2>0, \quad h\cdot\gamma_0>0, \quad \text{and} \quad h\cdot C>0
    \end{equation}
    for every irreducible effective curve $C$ with $C^2<0$.
    Here $\mathcal{H}(X)$ is given by 
    $$\mathcal{H}(X):=\frac{\ker \{\ddbar: A^{1,1}(X,\RR)\rightarrow A^{2,2}(X,\RR)\}}{\operatorname{Im} \{\ddbar: A^{0}(X,\RR)\rightarrow A^{1,1}(X,\RR)\}}
    \supset H^{1,1}_{\BC}(X,\RR).$$
    
    \medskip
    \noindent\textbf{Step 2: compute the degree of $T\mathcal{F}$ and $N\mathcal{F}$.}
    
    Let $\omega_0$ be a Gauduchon metric on $X$.
    For every irreducible curve $C$ with $C^2<0$, adjunction formula implies (see \cite[Chapter II, Section 11]{BPV84})
    $$K_X\cdot C=2p_a(C)-2-C^2,$$
    where $p_a(C)=1-\chi(\mathcal{O}_C)$ is the  arithmetic genus of $C$.
    If $p_a(C)\geq 1$, we have $K_X\cdot C\geq 0$.
    If $p_a(C)=0$, then $C$ is a smooth rational curve. 
    Since $X$ is minimal, we have $C^2\leq -2$.
    Therefore, $K_X\cdot C\geq 0$.
    Hence for any $s,t\geq0$, the class
    $$h_{t,s}:=[\omega_0]+tc_1^{\BC}(K_X)+s\gamma_0$$
    satisfies 
    $$h_{t,s}\cdot C=\int_C\omega_0+tK_X\cdot C>0 \quad \text{and} \quad h_{t,s}\cdot \gamma_0=[\omega_0]\cdot\gamma_0>0$$
    since $\gamma_0$ is represented by an exact positive current.
    Moreover, for any fixed $t$, one can choose $s$ large enough such that
    $$h^2_{t,s}=([\omega_0]+tc_1^{\BC}(K_X))^2+2s[\omega_0]\cdot\gamma_0>0.$$
    Then by formula~(\ref{eq:Gauduchon criterion}), $h_{t,s}$ is represented by a Gauduchon metric $\omega_{t,s}$.
    Note that for any line bundle $L$, we have $\gamma_0\cdot c_1^{\BC}(L)=0$. 
    Therefore,
    \begin{equation}
        \begin{aligned}
            \deg_{\omega_{t,s}}L&=\int_{X}c_1^{\BC}(L)\wedge\omega_{t,s}=\int_{X}c_1^{\BC}(L)\wedge(\omega_0+tc_1^{\BC}(K_X)+s\gamma_0) \\
            &=\int_{X}c_1^{\BC}(L)\wedge\omega_0+t\int_{X}c_1^{\BC}(L)\wedge c_1^{\BC}(K_X)\\
            &=\deg_{\omega_0}L+tc_1(L)\cdot c_1(K_X)=\deg_{\omega_0}L+tk\cdot c_1(L)
        \end{aligned}
    \end{equation}
    is independent of $s$.
    
    Now we consider three different cases.
    
    \medskip
    \noindent\textbf{Case I: $\sigma=2n$.}
    
    In this case, the surface is called an Enoki surface \cite{DK98,Eno81}, and we have $b^2=0$.
    Since $b_1(X)=1$ is odd and $\kappa(X)=-\infty$, we know that the intersection form on $H^2(X,\RR)$ is negative definite by \cite[Theorem 2.6(iii), Theorem 2.9, and Theorem 2.13 in Chapter IV]{BPV84}.
    Hence $b=0$ in de Rham cohomology.
    We also have $k\cdot a=n>0$ in this case.
    Hence, we can choose $t\gg 1$ and $s\gg 1$ such that $h_{t,s}$ is represented by a Gauduchon metric $\omega_{t,s}$ and $\deg_{\omega_{t,s}}T\mathcal{F}>0$.
    
    According to \cite[Lemma~6.6]{AD23} and \cite[Main Theorem and Proposition 3.4, 3.5]{Eno81}, 
    for any Enoki surface $X$, there is a cycle of rational curves $D$ in $X$ with $D^2=0$ and 
    $$\mathcal{O}_X(D)\simeq L_\mu, \quad \mu\in\CC^*, \quad 0<|\mu|<1,$$
    where $L_\mu\in \Pic^0(X)\simeq\CC^*$ is a flat line bundle.
    Therefore, $c_1(\mathcal{O}_X(D))=0$.
    Moreover, $X$ admits one singular holomorphic foliation $\mathcal{F}$ defined by a logarithmic $1$-form $\theta\in H^0(X,\Omega^1_X(\log D))$ (see also \cite{DK98} and \cite[Section 2.1, page 1512]{DO99}).
    The form is of positive type (see \cite[Definition~6.2]{AD23}) by \cite[Remark~6.9]{AD23}.
    In particular, its residue on every component of $D$ is nonzero.
    Its zero set is either empty or zero-dimensional by \cite[Lemma~6.15 and the paragraph following it]{AD23}.
    
    On the other hand, Dloussky--Oeljeklaus proved that in this case the relevant singular holomorphic foliation is unique (see \cite[Section~2.1, page 1512 and Theorem 5.5(i)]{DO99}). 
    Therefore, by Lemma~\ref{Lemma:The endpoint foliations} below, the foliation defined by $\theta$ is the foliation $\mathcal{F}$ in formula~(\ref{eq:Exact sequence of TF}), 
    and we have $N\mathcal{F}\simeq\mathcal{O}_X(D)$.
    Thus for the Gauduchon metric $\omega_{t,s}$ chosen before,
    $$\deg_{\omega_{t,s}} N\mathcal{F}=\int_{D}\omega_{t,s}>0.$$
    
    \medskip
    \noindent\textbf{Case II: $2n<\sigma<3n$.}
    
    In this case, we have $k\cdot a>0$ and $k\cdot b>0$ by formula~(\ref{eq:computation of ka and kb}). 
    Therefore, we can choose $t\gg 1$ and $s\gg 1$ such that $h_{t,s}$ is represented by a Gauduchon metric $\omega_{t,s}$ 
    and $\deg_{\omega_{t,s}}L>0$ for both $L=T\mathcal{F}$ and $L=N\mathcal{F}$.
    Then we find a Gauduchon metric $\omega=\omega_{t,s}$ satisfying
    $$\deg_\omega T\mathcal{F}>0 \quad \text{and} \quad \deg_\omega N\mathcal{F}>0.$$
    
    \medskip
    \noindent\textbf{Case III: $\sigma=3n$.}
    
    In this case, the surface is called an Inoue--Hirzebruch surface (\cite{Dlo88-a,Dlo88-b,Ino77,Nak84}), and we have $a^2=0$.
    Similarly to Case I, $a=0$ in de Rham cohomology, and $k\cdot b=n>0$.
    We first specify the foliation and then choose the Gauduchon metric..
    
    According to \cite{Dlo88-a,Dlo88-b,Ino77} (see also \cite[Section~2.2, pp.\,1525--1528]{DO99}), 
    any Inoue--Hirzebruch surface can be obtained from the contracting monomial germ in the GSS construction
    $$F(z_1,z_2)=(z_1^pz_2^q,z_1^rz_2^s) \quad \text{with } p,q,r,s\geq0 \quad \text{and}\quad 
    A= \begin{pmatrix}
        p & q \\ r & s
    \end{pmatrix} \in GL(2,\mathbb{Z}).$$
    The matrix $A$ has two eigenvalues
    $$\lambda_1>1 \quad\text{and}\quad 0<|\lambda_2|<1, \quad \text{with } \lambda_1\lambda_2=\det A=\pm 1.$$ 
    
    Dloussky and Oeljeklaus also defined (\cite[Section 2.2, formula ($\bigstar$)]{DO99}) twisted meromorphic $1$-forms with logarithmic
    poles 
    $$\theta_i(z)=\frac{a_i}{z_1}dz_1+\frac{b_i}{z_2}dz_2\in H^0(X,\Omega^1_X(\log D)\otimes L_{\lambda_i})$$
    where
    $(a_i,b_i)^t$ is an eigenvector of $A^t$ corresponding to $\lambda_i$, $i=1,2$.
    Here $D$ is the reduced maximal divisor, namely the union of all compact rational curves. 
    Actually, $D$ is the sum of one or two cycles (\cite[Example~4.4]{DO99}).
    Let $\pi:\widehat{X}\rightarrow X$ be the natural infinite cyclic covering in the GSS construction.
    Fix its deck generator $\gamma$ with
    the convention compatible with the contracting germ F in \cite[Lemma~4.3]{AD23} (cf. \cite[Definition~1.1 and Lemma~1.3]{DO99}).
    For $\mu\in\mathbb{C}^*$, set
    $$L_\mu:=(\widehat{X}\times\mathbb{C}/((x,v)\sim(\gamma x,\mu v))).$$
    The positive real characters satisfy (see \cite[Remarks 3.12-3.13, Example 4.5, and Lemma 6.7]{AD23})
    $$\mu>1 \Longleftrightarrow \deg_\omega L_\mu<0 \text{ for every Gauduchon metric }\omega.$$
    Note that
    $$\theta_1\wedge\theta_2=\det
    \begin{pmatrix}
        a_1 & b_1 \\ a_2 & b_2
    \end{pmatrix}
    \frac{dz_1}{z_1}\wedge\frac{dz_2}{z_2}\neq 0.$$
    Thus $\theta_1\wedge\theta_2$ is a nonzero holomorphic section of $K_X\otimes\mathcal{O}_X(D)\otimes L_{\det A}$.
    Let $E$ be its effective zero divisor.
    By \cite[Section 4, opening paragraph in page 1532]{DO99}, the only irreducible curves on $X$ are the $n=b_2(X)$ components $D_1,\ldots,D_n$ of $D$, and their classes generate $H_2(X,\mathbb{Q})$.
    Therefore, $E=\sum_{j=1}^nm_jD_j$ for some $m_j\geq 0$.
    
    Dloussky--Oeljeklaus showed that $D$ is numerically anticanonical \cite[Definition~4.1, Example~4.4]{DO99}, which means that there is a flat line bundle $M$ such that
    $$K_X^{-1}\otimes M\simeq\mathcal{O}_X(D).$$
    Since $L_{\det A}$ is also flat, we obtain
    $$c_1(K_X\otimes\mathcal{O}_X(D)\otimes L_{\det A})=0.$$
    Therefore, $c_1(\mathcal{O}_X(E))=0$.
    This implies that $E=0$.
    Thus $\theta_1\wedge\theta_2$ is nowhere zero. 
    Hence we obtain
    $$K_X\otimes\mathcal{O}_X(D)\otimes L_{\det A}\simeq\mathcal{O}_X.$$
    
    Let $\mathcal{F}$ be the foliation defined by $\theta_1$.
    By \cite[Definition~6.2, Lemma~6.7 and Remark~6.9]{AD23}, the form $\theta_1$ is of positive type.
    Thus its residue is nonzero on every component of $D$, and 
    its zero set away from $D$ is empty or
    zero-dimensional by \cite[Lemma~6.15]{AD23}.
    By applying Lemma~\ref{Lemma:The endpoint foliations}, we have 
    \begin{equation*}
        0\rightarrow T\mathcal{F}\rightarrow TX\rightarrow N\mathcal{F}\otimes \mathcal{I}_Z\rightarrow 0,
    \end{equation*}
    and 
    $$N\mathcal{F}\simeq\mathcal{O}_X(D)\otimes L_{\lambda_1}.$$
    On the other hand, 
    $$-K_X\simeq T\mathcal{F}\otimes N\mathcal{F}.$$
    Combining the argument above, we obtain
    \begin{align*}
        T\mathcal{F}&\simeq\mathcal{O}_X(D)\otimes L_{\det A}\otimes(\mathcal{O}_X(D)\otimes L_{\lambda_1})^*\\
        &=L_{\det A/\lambda_1} \\
        &=L_{\lambda_2}.
    \end{align*}
    By the character convention fixed above and the inequality $\lambda_1>1$, we have
    $\deg_\omega L_{\lambda_1}<0$ for every Gauduchon metric $\omega$.
    Note that $L_{\lambda_2}=L_{\det A}\otimes L_{\lambda_1}^*$.
    If $\det A=1$, then $L_{\det A}$ is trivial. 
    If $\det A=-1$, then $L_{-1}$ admits a flat Hermitian metric since $-1\in U(1)$.
    Thus $$\deg_\omega L_{\det A}=0.$$
    This shows that 
    $$\deg_\omega T\mathcal{F}=-\deg_\omega L_{\lambda_1}>0.$$
    Then for this foliation, we can find a Gauduchon metric $\omega=\omega_{t,s}$ satisfying
    $$\deg_\omega T\mathcal{F}>0 \quad \text{and} \quad \deg_\omega N\mathcal{F}>0.$$
    
    By applying Lemma~\ref{Lemma:Uniformly RC-positive for Kato surfaces}, $TX$ is uniformly RC-positive.
    
    This completes the proof of Theorem~\ref{Main Thm 3(2)}.
    
\end{proof}

\begin{lemma}\label{Lemma:The endpoint foliations}
    Let $D$ be a reduced normal crossing divisor on a compact complex surface $X$ and $L$ a line bundle over $X$.
    Let $\theta\in H^0(X,\Omega^1_X(\log D)\otimes L)$ be nonzero.
    Suppose that the zeros of $\theta$ away from $D$ are isolated and that the residue of $\theta$ along every irreducible component of $D$ is nonzero. 
    Consider the holomorphic morphism $\widetilde{\theta}\in H^0(X,\Omega^1_X\otimes\mathcal{O}_X(D)\otimes L)=\Hom(TX,\mathcal{O}_X(D)\otimes L)$ defined by
    $$\widetilde{\theta}:TX\longrightarrow \mathcal{O}_X(D)\otimes L, \quad u\longmapsto \theta(u),$$
    induced by $\theta$ via the natural inclusion $\Omega^1_X(\log D)\hookrightarrow\Omega^1_X\otimes\mathcal{O}_X(D)$.
    Then there is a singular holomorphic foliation $\mathcal{F}$ with a tangent line bundle and an exact sequence
    \begin{equation}
        0\rightarrow T\mathcal{F}\rightarrow TX\rightarrow N\mathcal{F}\otimes \mathcal{I}_Z\rightarrow 0,
    \end{equation}
    where $Z$ is a zero-dimensional complex space and $N\mathcal{F}:=(TX/T\mathcal{F})^{**}$ is the normal bundle satisfying
    $$N\mathcal{F}\simeq\mathcal{O}_X(D)\otimes L.$$
\end{lemma}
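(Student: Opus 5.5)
The plan is to show that $\widetilde{\theta}$ vanishes only on a finite set $Z$. Then $T\mathcal{F}:=\ker\widetilde{\theta}$ is a line bundle, and the image of $\widetilde{\theta}$ is $\mathcal{O}_X(D)\otimes L\otimes\mathcal{I}_Z$.

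\textbf{Step 1: local description of $\widetilde{\theta}$.} Work locally, trivializing $L$. Away from $D$, $\widetilde{\theta}=\theta$ is a holomorphic $1$-form, and by hypothesis its zeros are isolated.

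Near a smooth point of a component $D_i=\{z_1=0\}$, write
$$\theta=f\frac{dz_1}{z_1}+g\,dz_2.$$
Then the residue of $\theta$ along $D_i$ is $f|_{D_i}$. Under $\Omega^1_X(\log D)\hookrightarrow\Omega^1_X\otimes\mathcal{O}_X(D)$, the induced section is $z_1\theta=f\,dz_1+z_1g\,dz_2$. This vanishes only where $f=0$ and $z_1g=0$. On $D_i$ the vanishing condition reduces to $f=0$. Since the residue is a nonzero holomorphic function on the compact curve $D_i$, I will argue it is locally constant (in the twisted setting, a section of a flat bundle over the component). Hence $f|_{D_i}$ has no zeros, or at worst isolated ones.

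Near a node $z_1z_2=0$, write
$$\theta=f\frac{dz_1}{z_1}+g\frac{dz_2}{z_2}.$$
Then $\widetilde{\theta}=z_2f\,dz_1+z_1g\,dz_2$, which vanishes at the node and at zeros of $f$ or $g$ on the branches. These are isolated points.

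So the zero scheme $Z$ of $\widetilde{\theta}$, viewed as a section of $\Omega^1_X\otimes\mathcal{O}_X(D)\otimes L$, is zero-dimensional. This is the step I expect to need the most care: a zero set of dimension one would have to be a curve. Off $D$ such a curve is excluded by hypothesis. A component of $D$ is excluded because a curve in the zero set would force the residue to vanish identically along that component.

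\textbf{Step 2: the exact sequence.} Since $\widetilde{\theta}:TX\to N:=\mathcal{O}_X(D)\otimes L$ has isolated zeros, its image is $N\otimes\mathcal{I}_Z$, where $\mathcal{I}_Z$ is the ideal generated locally by the two coefficient functions of $\widetilde{\theta}$. The kernel $K$ is a saturated rank one subsheaf of the locally free sheaf $TX$, so it is reflexive, and on a smooth surface it is therefore a line bundle. Concretely, locally $K$ is generated by the vector field $a\partial_2-b\partial_1$ dual to $\widetilde{\theta}=a\,dz_1+b\,dz_2$. Set $T\mathcal{F}:=K$; it is involutive since it has rank one, giving
$$0\rightarrow T\mathcal{F}\rightarrow TX\rightarrow N\otimes\mathcal{I}_Z\rightarrow 0.$$

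\textbf{Step 3: the normal bundle.} From the sequence, $TX/T\mathcal{F}\simeq N\otimes\mathcal{I}_Z$. Because $Z$ has codimension two, $(N\otimes\mathcal{I}_Z)^{**}=N$. Therefore $N\mathcal{F}\simeq\mathcal{O}_X(D)\otimes L$.
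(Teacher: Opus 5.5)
Your proposal is correct and follows essentially the same route as the paper: the local coefficient rows $(f, z_1 g)$ and $(z_2 f, z_1 g)$ near smooth and nodal points of $D$, zero-dimensionality of the common zero locus from the two hypotheses, the kernel as a line bundle, the image $\mathcal{O}_X(D)\otimes L\otimes\mathcal{I}_Z$, and $N\mathcal{F}$ recovered by taking the double dual. The paper gets local freeness of the kernel from the Koszul complex of the regular sequence, whereas you use reflexivity of a saturated rank-one subsheaf; both are standard. Your claim that the residue is locally constant need not hold for a non-closed twisted form, but you never use it: a residue that does not vanish identically on an irreducible component has isolated zeros there, which is exactly what your closing argument in Step 1 relies on.
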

\begin{proof}
    Near a smooth point of $D$, we may assume that $D=\{z_1=0\}$. 
    Let $e_L$ be a local frame of $L$.
    Write $$\theta=(a\frac{dz_1}{z_1}+b\,dz_2)\otimes e_L=(adz_1+bz_1dz_2)\otimes\frac{1}{z_1}\otimes e_L.$$
    Thus the coefficient row of $\widetilde{\theta}$ is $(a,bz_1)$, and its kernel is generated by
    $$-z_1b\frac{\partial}{\partial z_1}+a\frac{\partial}{\partial z_2}.$$
    
    Near a normal-crossing point of $D$, we may assume that $D=\{z_1z_2=0\}$. 
    Write 
    $$\theta=(a\frac{dz_1}{z_1}+b\frac{dz_2}{z_2})\otimes e_L=(az_2dz_1+bz_1dz_2)\otimes\frac{1}{z_1z_2}\otimes e_L.$$
    Thus the coefficient row of $\widetilde{\theta}$ is $(az_2,bz_1)$, and its kernel is generated by
    $$-z_1b\frac{\partial}{\partial z_1}+az_2\frac{\partial}{\partial z_2}.$$
    
    Define $\mathscr{K}:=\ker\widetilde{\theta}$ and $\mathscr{I}:=\operatorname{Im}\widetilde{\theta}$.
    Then $\mathcal{I}_Z:=(\mathcal{O}_X(D)\otimes L)^{-1}\otimes\mathscr{I}$ is a coherent ideal of $\mathcal{O}_X$.
    By the assumption, there is no common divisorial factor in the two coefficients of $\widetilde{\theta}$.
    It follows that $\supp(\mathcal{O}_X/\mathcal{I}_Z)$ has codimension at least two. 
    Thus $Z$ is zero-dimensional.
    
    Locally, $\widetilde{\theta}$ can be regarded as $(f,g):\mathcal{O}^{\oplus 2}\rightarrow\mathcal{O}$ with $f,g$ having no common divisorial factor.
    If they vanish at a common point, they form a regular sequence in the two-dimensional regular local ring.
    The Koszul complex gives
    $$0\longrightarrow\mathcal{O}\xrightarrow{(-g,f)}\mathcal{O}^{\oplus 2} \xrightarrow{(f,g)}(f,g) \longrightarrow 0.$$
    Consequently, $\mathscr{K}$ is locally free of rank one and the image is $ \mathcal{O}_X(D)\otimes L\otimes\mathcal{I}_Z$. 
    We have 
    $$0\longrightarrow\mathscr{K}\rightarrow TX \xrightarrow{\widetilde{\theta}}\mathcal{O}_X(D)\otimes L\otimes\mathcal{I}_Z\longrightarrow 0.$$
    
    On $X\setminus Z$, $\mathscr{K}$ is a holomorphic line subbundle of $TX$.
    If $\widetilde{\theta}=(f,g)$, its fibers are precisely the tangent directions generated by 
    $$-g\frac{\partial}{\partial z_1}+f\frac{\partial}{\partial z_2}.$$
    It therefore defines a one-dimensional singular holomorphic foliation $\mathcal{F}$.
    Note that $\mathscr{K}$ is saturated. 
    Then we have 
    $$T\mathcal{F}=\mathscr{K}=\ker\widetilde{\theta}.$$
    Define the normal line bundle by
    $$N\mathcal{F}:=(TX/T\mathcal{F})^{**}.$$
    Since $Z$ is zero-dimensional, $\mathcal{I}_Z^{**}\simeq\mathcal{O}_X$.
    Therefore, $N\mathcal{F}\simeq\mathcal{O}_X(D)\otimes L$.
    
    This completes the proof of Lemma~\ref{Lemma:The endpoint foliations}.
\end{proof}

\subsection{Mean curvature positivity}

We finally consider surfaces with no holomorphic foliation.
\begin{theorem}\label{Main Thm 3(3)}
    Let $X$ be a surface of class $\VII_0^+$ with no holomorphic foliation.
    Then $TX$ admits a Hermitian metric with positive mean curvature.
    In particular, $TX$ is RC-positive.
\end{theorem}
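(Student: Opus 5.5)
By Theorem~\ref{Theorem:Mean curvature positivity and HN-positivity}, it suffices to find a Gauduchon metric $\omega$ on $X$ with $\mu_L(TX,\omega)>0$. Positive mean curvature then implies RC-positivity: if $i\Lambda_\omega\Theta_{TX,h}>0$, then for each $v\neq0$ the trace of $R^h(\cdot,\cdot,v,\bar v)$ with respect to $\omega$ is positive, so some $u$ has $R^h(u,\bar u,v,\bar v)>0$ (cf. \cite[Theorem 3.6]{Yan18}). So the plan is to control the slopes of all coherent quotient sheaves of $TX$.

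\textbf{Reduction to rank one.} Let $\mathcal{Q}$ be a coherent quotient of $TX$ with $\rk\mathcal{Q}>0$. If $\rk\mathcal{Q}=2$, the kernel is torsion, hence zero since $TX$ is torsion free. Thus $\mathcal{Q}=TX$, and $\deg_\omega\mathcal{Q}=\deg_\omega(-K_X)$. Teleman's result quoted above says $c_1^{\BC}(K_X)$ is not pseudo-effective, so by Theorem~\ref{Theorem:Characterization of RC-positive line bundle} there is a Gauduchon metric with $\deg_\omega(-K_X)>0$. If $\rk\mathcal{Q}=1$, the kernel is a saturated rank-one subsheaf $\mathcal{S}\subset TX$, hence reflexive, hence a line bundle on a surface. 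A nonzero saturated subsheaf $\mathcal{S}\hookrightarrow TX$ defines a singular holomorphic foliation. By hypothesis there is none, so no such quotient exists. Torsion in $\mathcal{Q}$ only increases degree, since $\det$ picks up an effective divisor, so I can pass to the torsion-free part. This rules out every rank-one quotient, including the non-saturated case after saturating the kernel.

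\textbf{Conclusion.} Consequently $\mu_L(TX,\omega)=\mu_\omega(TX)=\frac12\deg_\omega(-K_X)>0$ for the Gauduchon metric chosen above. I would still double-check that the infimum over the allowed quotients is attained within this finite list. Applying Theorem~\ref{Theorem:Mean curvature positivity and HN-positivity} with $\widetilde\omega=\omega$ then gives a Hermitian metric $h$ on $TX$ with $i\Lambda_\omega\Theta_{TX,h}>0$.

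\textbf{Main obstacle.} The delicate step is the rank-one case: making precise that any rank-one quotient sheaf yields an honest holomorphic foliation, namely the saturated kernel. One must also confirm that the paper's notion of "no holomorphic foliation" includes singular foliations given by line subbundles with isolated singularities, i.e. by nonzero sections of $\Hom(L,TX)$. I would treat this via the standard correspondence: the saturation of $\mathcal{S}$ is a line bundle $L$ with $L\to TX$ vanishing in codimension two, and integrability is automatic in dimension one.
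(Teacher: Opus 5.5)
Your argument is correct and rests on the same key point as the paper's proof: under the hypothesis, $TX$ has no saturated rank-one subsheaf, so it has no rank-one quotient, and with Teleman's non-pseudo-effectivity of $c_1^{\BC}(K_X)$ and Yang's characterization this gives $\mu_L(TX,\omega)=\frac12\deg_\omega(-K_X)>0$. The only difference is that the paper first shows the $\omega$-Harder--Narasimhan filtration is trivial and then uses the Li--Zhang--Zhang identification of $\mu_L(TX,\omega)$ with the slope of the last Harder--Narasimhan quotient. You instead note directly that $TX$ is its own only coherent quotient of positive rank, which gives the infimum from the definition and makes both your torsion remark and your worry about the infimum being attained unnecessary.
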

\begin{proof}
    By the argument at the beginning of Section~\ref{Sec:Classification 2}, $c_1^{\BC}(K_X)$ is not pseudo-effective, and 
    there exists a Gauduchon metric $\omega$ such that 
    $$\deg_\omega(-K_X)>0.$$
    Consider the Harder--Narasimhan filtration of $TX$ with respect to $\omega$
    $$0=\mathcal{E}_0\subset \mathcal{E}_1\subset \cdots \subset \mathcal{E}_l=TX,$$
    with torsion-free quotients $\mathcal{Q}_j:=\mathcal{E}_j/\mathcal{E}_{j-1}$.
    If the filtration were nontrivial, its first  term $\mathcal{E}_1$ would be a saturated rank-one subsheaf of $TX$.
    Then $\mathcal{E}_1$ is a reflexive sheaf and
    hence a line bundle.
    This subsheaf $\mathcal{E}_1\subset TX$ defines a holomorphic foliation.
    This contradicts the hypothesis.
    Hence the Harder--Narasimhan filtration is trivial and $TX$ is $\omega$-semistable.
    Li--Zhang--Zhang identify $\mu_L(E,\omega)$ with the slope of the final Harder--Narasimhan quotient (\cite[formula~(1.11)]{LZZ25}).
    This shows that 
    $$\mu_L(TX,\omega)=\mu_\omega(TX)=\frac{1}{2}\deg_\omega(-K_X)>0.$$
    Hence by Theorem~\ref{Theorem:Mean curvature positivity and HN-positivity}, $TX$ is mean curvature positive. 
    In particular, $TX$ is RC-positive.
    
\end{proof}

In summary, we establish the following result.
\begin{theorem}[=Theorem~\ref{Main Thm 3}]\label{Main Thm 3'}
    Let $X$ be a surface of class $\VII_0^+$. Then
    \begin{enumerate}
        \item $-K_X$ is (uniformly) RC-positive.
        \item If $X$ is a Kato surface, then $TX$ is
        uniformly RC-positive.
        \item If $X$ carries no holomorphic foliation, then $TX$ is mean curvature positive.
    \end{enumerate}
\end{theorem}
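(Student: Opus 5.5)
This summary theorem is assembled from the three results proved in this section. I will treat its parts in turn, citing the paper's earlier results rather than proving anything new.

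For (1), I will use Teleman's result quoted at the start of Section~\ref{Sec:Classification 2}: on every minimal class $\VII$ surface with $b_2>0$, the class $c_1^{\BC}(K_X)$ is not pseudo-effective. Since $X$ is of class $\VII_0^+$, this gives that $K_X=(-K_X)^*$ is not pseudo-effective. The implication (1)$\Rightarrow$(4) of Theorem~\ref{Theorem:Characterization of RC-positive line bundle}, applied to $L=-K_X$, then yields that $-K_X$ is RC-positive. For a line bundle, RC-positivity and uniform RC-positivity coincide, because the only vectors $v$ are multiples of a single frame. This explains the parenthetical ``(uniformly)''.

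For (2), I will simply invoke Theorem~\ref{Main Thm 3(2)}. Its proof takes the Dloussky--Oeljeklaus foliation with the exact sequence \eqref{eq:Exact sequence of TF}. It produces a Gauduchon metric $\omega_{t,s}$ with $\deg_\omega T\mathcal{F}>0$ and $\deg_\omega N\mathcal{F}>0$, splitting into Cases I--III according to $\sigma$, and then applies Lemma~\ref{Lemma:Uniformly RC-positive for Kato surfaces}.

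For (3), I will invoke Theorem~\ref{Main Thm 3(3)}. Its argument runs as follows. Choose a Gauduchon metric with $\deg_\omega(-K_X)>0$, which is possible by (1) together with Theorem~\ref{Theorem:Characterization of RC-positive line bundle}(2). The absence of foliations forces $TX$ to be $\omega$-semistable, so $\mu_L(TX,\omega)=\tfrac12\deg_\omega(-K_X)>0$. Theorem~\ref{Theorem:Mean curvature positivity and HN-positivity} then gives mean curvature positivity.

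No real obstacle remains, since all three parts have already been established above. The only point needing care is in (1): the Bott--Chern non-pseudo-effectivity in Teleman's statement must match the notion used in Theorem~\ref{Theorem:Characterization of RC-positive line bundle}(1). Both refer to the nonexistence of a singular metric on $K_X$ with positive curvature current, so they agree.
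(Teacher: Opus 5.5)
Your proposal is correct and follows the paper's own assembly. Part (1) comes from Teleman's non-pseudo-effectivity of $c_1^{\BC}(K_X)$ combined with Yang's characterization of RC-positive line bundles, and parts (2) and (3) are exactly Theorems~\ref{Main Thm 3(2)} and~\ref{Main Thm 3(3)}, the latter via $\omega$-semistability of $TX$ and the Li--Zhang--Zhang criterion.
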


In particular, we have:
\begin{corollary}
    Let $X$ be a surface of class $\VII_0^+$.
    \begin{enumerate}
        \item If $b_2(X)=1$, then $TX$ is uniformly RC-positive.
        \item If $b_2(X)=2$, then $TX$ is mean curvature positive.
        If moreover $X$ admits a holomorphic foliation, then $TX$ is uniformly RC-positive.
    \end{enumerate}
\end{corollary}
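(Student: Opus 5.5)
The corollary follows by combining Theorem~\ref{Main Thm 3'} with known classification results for class $\VII_0^+$ surfaces of small second Betti number.

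\textbf{Case $b_2(X)=1$.} I would invoke Teleman's theorem that every class $\VII_0^+$ surface with $b_2=1$ has a global spherical shell, i.e.\ is a Kato surface. Theorem~\ref{Main Thm 3'}(2) then gives that $TX$ is uniformly RC-positive. Nothing beyond the citation is needed.

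\textbf{Case $b_2(X)=2$.} I would split according to whether $X$ carries a holomorphic foliation.
\begin{enumerate}
\item If $X$ carries no holomorphic foliation, Theorem~\ref{Main Thm 3'}(3) directly gives mean curvature positivity of $TX$.
\item If $X$ carries a foliation, the goal is to show $X$ is a Kato surface. For this I would cite Teleman's result for $b_2=2$: such a surface either contains a cycle of rational curves, hence by Dloussky--Oeljeklaus--Toma is a Kato surface, or has no foliation (Teleman's work on $b_2=2$, together with the known result that foliated class $\VII_0^+$ surfaces with a curve cycle have a GSS). Theorem~\ref{Main Thm 3'}(2) then gives uniform RC-positivity, and uniform RC-positivity in particular implies mean curvature positivity (\cite[Theorem 3.6]{Yan18}). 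Hence $TX$ is mean curvature positive in both subcases, and uniformly RC-positive when a foliation exists.
\end{enumerate}

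\textbf{Main obstacle.} The delicate step is the citation chain for $b_2=2$ with a foliation. One needs that a foliated minimal class $\VII$ surface with $b_2=2$ has enough curves, namely a cycle, to conclude it has a GSS. Teleman proved that for $b_2=2$ there is always a cycle of curves, which reduces the problem to the GSS theorem for surfaces with $b_2$ curves or with a cycle of curves. If this chain is unavailable, one would instead try to run the argument of Lemma~\ref{Lemma:Uniformly RC-positive for Kato surfaces} directly on the foliation's exact sequence. That route would require the degree computations of Step 2 in the proof of Theorem~\ref{Main Thm 3(2)}, together with the Gauduchon cone description $h_{t,s}$, to produce a Gauduchon metric with $\deg T\mathcal{F}>0$ and $\deg N\mathcal{F}>0$.
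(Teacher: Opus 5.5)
Your structure is the paper's. Part (1) is Teleman's GSS theorem for $b_2=1$ \cite{Tel05} combined with Theorem~\ref{Main Thm 3'}(2). Part (2) splits into the foliation-free case, handled by Theorem~\ref{Main Thm 3'}(3), and the foliated case: there one shows $X$ is Kato, applies Theorem~\ref{Main Thm 3'}(2), and uses that uniform RC-positivity implies mean curvature positivity.

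The step that fails as written is your justification that a foliated surface with $b_2=2$ is Kato. The Dloussky--Oeljeklaus--Toma theorem needs $b_2(X)$ rational curves, and a cycle may have fewer components. An unconditional implication ``cycle of rational curves $\Rightarrow$ GSS'' is not available. If it were, Teleman's theorem that every $\VII_0^+$ surface with $b_2=2$ contains a cycle would make all such surfaces Kato, hence all foliated by \cite{DO99}. The foliation-free case of (2) would then be empty, and your ``either a cycle, hence Kato, or no foliation'' is not a real dichotomy.

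The foliation hypothesis has to enter the argument. The paper does this by citing Brunella \cite[Theorem~0.1]{Bru11}, \cite{Bru13}: a class $\VII_0^+$ surface with $b_2=2$ carrying a holomorphic foliation is a Kato surface. Your parenthetical remark about foliated surfaces with a cycle points in this direction; drop the DOT step and cite this result directly.

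Your fallback would not replace that citation. The relations $a^2=\sigma-3n$, $b^2=2n-\sigma$, $a\cdot b=0$, and the separate treatment of $\sigma=2n$ and $\sigma=3n$ in the proof of Theorem~\ref{Main Thm 3(2)}, all rest on the Kato structure. For a general foliation you only get $k\cdot(a+b)=-k^2=b_2(X)>0$. That guarantees one of $k\cdot a$, $k\cdot b$ is positive but not both, so taking $t\gg1$ in $h_{t,s}$ need not make both $\deg T\mathcal{F}$ and $\deg N\mathcal{F}$ positive.
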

\begin{proof}
    (1) Teleman proved the GSS conjecture in \cite{Tel05} for every surface of class $\VII_0^+$ with $b_2(X)=1$. 
    Thus $X$ is a Kato surface. 
    Hence $TX$ is uniformly RC-positive.
    
    (2) For $b_2(X)=2$, Brunella (\cite[Theorem~0.1]{Bru11} and \cite{Bru13}) proved that a surface carrying a foliation is a Kato surface. 
    Hence $TX$ is uniformly RC-positive, and therefore mean curvature positive.
    If $X$ carries no holomorphic foliation, then by Theorem~\ref{Main Thm 3'}, $TX$ is mean curvature positive.
\end{proof}


\begin{thebibliography}{99}
    
    \bibitem[AD08]{AD08} Vestislav Apostolov and Georges Dloussky, \textit{Bihermitian metrics on Hopf surfaces}, Math. Res. Lett. {\bf 15} (2008), no.~5, 827--839.
    
    \bibitem[AD23]{AD23} Vestislav Apostolov and Georges Dloussky, \textit{Twisted differentials and Lee classes of locally conformally symplectic complex surfaces}, Math. Z. {\bf 303} (2023), no.~3, Paper No. 76, 33 pp.
    
    \bibitem[AK03]{AK03} Carolina Araujo and J\'anos Koll\'ar, \textit{Rational curves on varieties}, in Higher dimensional varieties and rational points (Budapest, 2001), 13--68, Bolyai Soc. Math. Stud., 12, Springer, Berlin, 2003.
    
    \bibitem[BPV84]{BPV84} W.~P. Barth, C.~A.~M. Peters and A.~J.~H.~M. Van~de~Ven, \textit{Compact complex surfaces}, Ergebnisse der Mathematik und ihrer Grenzgebiete (3), 4, Springer, Berlin, 1984.
    
    \bibitem[Bog76]{Bog76} Fedor Alekseevich Bogomolov, \textit{Classification of surfaces of class VII0 with $b\sb{2}=0$}, Izv. Akad. Nauk SSSR Ser. Mat. {\bf 40} (1976), no.~2, 273--288, 469.
    
    \bibitem[Bog82]{Bog82} Fedor Alekseevich Bogomolov, \textit{Surfaces of class ${\rm VII}\sb{0}$\ and affine geometry}, Izv. Akad. Nauk SSSR Ser. Mat. {\bf 46} (1982), no.~4, 710--761, 896.
    
    \bibitem[Bro24]{Bro24} Garrett M. Brown, \textit{The sign of scalar curvature on K\"ahler blowups}, Preprint, arXiv:2405.12189 (2024).
    
    \bibitem[Bru11]{Bru11} Marco Brunella, \textit{On a class of foliated non-K\"ahlerian compact complex surfaces}, Tohoku Math. J. (2) {\bf 63} (2011), no.~3, 441--460.
    
    \bibitem[Bru13]{Bru13} Marco Brunella, \textit{Corrigendum to ``On a class of foliated non-K\"ahlerian compact complex surfaces'' [MR2851106]}, Tohoku Math. J. (2) {\bf 65} (2013), no.~4, 607--608.
    \bibitem[CDP15]{CDP15}F.~Campana, J.-P.~Demailly, and Th.~Peternell, \textit{Rationally connected manifolds and semipositivity of the Ricci curvature}, in \emph{Recent advances in algebraic geometry}, London Math. Soc. Lecture Note Ser., vol. 417, Cambridge Univ. Press, Cambridge, 2015, pp. 71--91. MR3380444
    \bibitem[CTW19]{CTW19} Jianchun Chu, Valentino Tosatti, and Ben Weinkove, \textit{The Monge-Amp\`ere equation for non-integrable almost complex structures}, J. Eur. Math. Soc. (JEMS) {\bf 21} (2019), no.~7, 1949--1984.

    \bibitem[Dem]{Dem} Jean-Pierre Demailly, \textit{Complex analytic and differential geometry}, 2012, available at \url{https://www-fourier.univ-grenoble-alpes.fr/~demailly/manuscripts/agbook.pdf}.
    
    \bibitem[Dlo88-a]{Dlo88-a} Georges Dloussky, \textit{Sur la classification des germes d'applications holomorphes contractantes}, Math. Ann. {\bf 280} (1988), no.~4, 649--661.
    
    \bibitem[Dlo88-b]{Dlo88-b} Georges Dloussky, \textit{Une construction \'el\'ementaire des surfaces d'Inoue-Hirzebruch}, Math. Ann. {\bf 280} (1988), no.~4, 663--682.
    
    \bibitem[DK98]{DK98} Georges Dloussky and Franz Kohler, \textit{Classification of singular germs of mappings and deformations of compact surfaces of class VII$_0$}, Ann. Polon. Math. {\bf 70} (1998), 49--83.
    
    \bibitem[DO99]{DO99} Georges Dloussky and Karl Oeljeklaus, \textit{Vector fields and foliations on compact surfaces of class $\rm VII_0$}, Ann. Inst. Fourier (Grenoble) {\bf 49} (1999), no.~5, 1503--1545.
    
    \bibitem[Eno81]{Eno81} Ichiro Enoki, \textit{Surfaces of class ${\rm VII}\sb{0}$\ with curves}, Tohoku Math. J. (2) {\bf 33} (1981), no.~4, 453--492.
    
    \bibitem[GZ11]{GZ11} Ning Gan and Xiangyu Zhou, \textit{Structure of vector bundles over general Hopf manifolds}, Chinese Ann. Math. Ser. A {\bf 32} (2011), no.~1, 115--120.
    
    \bibitem[GZ20]{GZ20} Ning Gan and Xiangyu Zhou, \textit{The structure of vector bundles on non-primary Hopf manifolds}, Chinese Ann. Math. Ser. B {\bf 41} (2020), no.~6, 929--938.
    
    \bibitem[Gau77]{Gau77} Paul Gauduchon, \textit{Le th\'eor\`eme de l'excentricit\'e{} nulle}, C. R. Acad. Sci. Paris S\'er. A-B {\bf 285} (1977), no.~5, {\rm A}387--{\rm A}390.
        
    \bibitem[GO98]{GO98} Paul Gauduchon and Liviu Ornea, \textit{Locally conformally K\"ahler metrics on Hopf surfaces}, Ann. Inst. Fourier (Grenoble) {\bf 48} (1998), no.~4, 1107--1127.    
    
    \bibitem[Has93]{Has93} Keizo Hasegawa, \textit{Deformations and diffeomorphism types of Hopf manifolds}, Illinois J. Math. {\bf 37} (1993), no.~4, 643--651.
    
    \bibitem[Has23]{Has23} Keizo Hasegawa, \textit{A note on locally conformally K\"ahler structures and small deformations of Hopf manifolds}, Preprint, arXiv:2305.10768v2 (2023).
    
    \bibitem[Hit75]{Hit75} Nigel Hitchin, \textit{On the curvature of rational surfaces}, in Differential geometry (Proc. Sympos. Pure Math., Vol. XXVII, Part 2, Stanford Univ., Stanford, Calif., 1973), pp. 65--80, Proc. Sympos. Pure Math., Vol. XXVII, Part 2, Amer. Math. Soc., Providence, RI, 1975.
    
    \bibitem[Ino77]{Ino77} Masahisa Inoue, \textit{New surfaces with no meromorphic functions. II}, in {\it Complex analysis and algebraic geometry}, pp. 91--106, Iwanami Shoten Publishers, Tokyo, 1977.
    
    \bibitem[Kat77]{Kat77} Masahide Kato, \textit{Compact complex manifolds containing ``global''\ spherical shells. I}, in Proceedings of the International Symposium on Algebraic Geometry (Kyoto Univ., Kyoto, 1977), pp. 45--84.
    
    \bibitem[Kol96]{Kol96} J\'anos Koll\'ar, \textit{Rational curves on algebraic varieties}, Ergebnisse der Mathematik und ihrer Grenzgebiete. 3. Folge. A Series of Modern Surveys in Mathematics, 32, Springer, Berlin, 1996.
    
    \bibitem[KS26]{KS26} Nikon Kurnosov and Calum Spicer, \textit{Class VII surfaces with $b_2=3$ and two foliations are Kato}, Preprint, arXiv:2608.29047 (2026).
    
    \bibitem[Lee13]{Lee13} John M. Lee, \textit{Introduction to smooth manifolds}, second edition, Graduate Texts in Mathematics, 218, Springer, New York, 2013.

    \bibitem[LZZ25]{LZZ25} Chao Li, Chuanjing Zhang, and Xi Zhang, \textit{Mean curvature positivity and rational connectedness}, Adv. Math. {\bf 483} (2025), Paper No. 110673, 43 pp.
    
    \bibitem[LY87]{LY87} Jun Li and Shing-Tung Yau, \textit{Hermitian-Yang-Mills connection on non-K\"ahler manifolds}, in  Mathematical aspects of string theory (San Diego, Calif., 1986), 560--573, Adv. Ser. Math. Phys., 1, World Sci. Publishing, Singapore, 1987.
    
    \bibitem[LYZ90]{LYZ90}Jun Li, Shing-Tung Yau, and Fangyang Zheng, \textit{A simple proof of Bogomolov's theorem on class ${\rm VII}_0$ surfaces with $b_2=0$}, Illinois J. Math. {\bf 34} (1990), no.~2, 217--220.

    \bibitem[LY17]{LY17} Kefeng Liu and Xiaokui Yang, \textit{Ricci curvatures on Hermitian manifolds}, Trans. Amer. Math. Soc. {\bf 369} (2017), no.~7, 5157--5196.
    
    \bibitem[Mic82]{Mic82} Marie-Louise Michelsohn, \textit{On the existence of special metrics in complex geometry}, Acta Math. {\bf 149} (1982), no.~3-4, 261--295.
    
    \bibitem[Nak84]{Nak84} Iku Nakamura, \textit{On surfaces of class ${\rm VII}_0$ with curves}, Invent. Math. {\bf 78} (1984), no.~3, 393--443.
    
    \bibitem[Tel94]{Tel94} Andrei Teleman, \textit{Projectively flat surfaces and Bogomolov's theorem on class ${\rm VII}_0$ surfaces}, Internat. J. Math. {\bf 5} (1994), no.~2, 253--264.
    
    \bibitem[Tel05]{Tel05} Andrei Teleman, \textit{Donaldson theory on non-K\"ahlerian surfaces and class VII surfaces with $b_2=1$}, Invent. Math. {\bf 162} (2005), no.~3, 493--521.
    
    \bibitem[Tel06]{Tel06} Andrei Teleman, \textit{The pseudo-effective cone of a non-K\"ahlerian surface and applications}, Math. Ann. {\bf 335} (2006), no.~4, 965--989.
    
    \bibitem[Y1]{Yan17} Xiaokui Yang, \textit{Big vector bundles and complex manifolds with semi-positive tangent bundles}, Math. Ann. {\bf 367} (2017), no.~1-2, 251--282.
    
    \bibitem[Y2]{Yan18} Xiaokui Yang, \textit{RC-positivity, rational connectedness and Yau's conjecture}, Camb. J. Math. {\bf 6} (2018), no.~2, 183--212.
    
    \bibitem[Y3]{Yan19} Xiaokui Yang, \textit{A partial converse to the Andreotti-Grauert theorem}, Compos. Math. {\bf 155} (2019), no.~1, 89--99.
    
    \bibitem[Y4]{Yan20} Xiaokui Yang, \textit{RC-positive metrics on rationally connected manifolds}, Forum Math. Sigma {\bf 8} (2020), Paper No. e53, 19 pp.
    
    \bibitem[Yau82]{Yau82} Shing-Tung Yau, \textit{Problem section}, in Seminar on Differential Geometry, pp. 669--706, Ann. of Math. Stud., No. 102, Princeton Univ. Press, Princeton, NJ, 1982.
    
    \bibitem[Z]{Zha26} Shiyu Zhang, \textit{Positive holomorphic sectional curvature on rational surfaces}, Preprint, arXiv:2606.23333 (2026).
    
    \bibitem[Zho09]{Zho09} Xiangyu Zhou, \textit{Some results about holomorphic vector bundles over general Hopf manifolds}, Sci. China Ser. A {\bf 52} (2009), no.~12, 2863--2866.


\end{thebibliography}
\end{document}